\definecolor{grey}{rgb}{0.65,0.65,0.65}
\newcommand{\gr}[1]{\textcolor{grey}{#1}}
\newcommand{\equj}[1]{\gr{#1}}
\newcommand{\difj}[1]{#1}
\newtheorem{theorem}{Theorem}
\newtheorem{lemma}{Lemma}
\newtheorem{proposition}{Proposition}
\newtheorem{corollary}{Corollary}
\newtheorem{definition}{Definition}
\newtheorem{fact}{Fact}
\crefname{appsec}{Appendix}{Appendices}
\crefname{fact}{Fact}{Facts}
\newcommand{\card}[1]{\left|#1\right|}
\newcommand{\abs}[1]{\card{#1}}
\newcommand{\set}[1]{\left\{#1\right\}}
\title{Optimizing alphabet reduction pairs of arrays\footnote{Most of the material in this paper appeared in a journal version published in \emph{Mathematics in Computer Science} \cite{CT26-reg}. The remaining material has been accepted for publication in the proceedings of the LATIN 2026 conference \cite{T26}.}}
\author{%
	Jean-François Culus\thanks{jean-francois.culus@st-cyr.terre-net.defense.gouv.fr MEMIAD, UA, Crec Saint-Cyr, France} 
 and Sophie Toulouse\thanks{sophie.toulouse@lipn.univ-paris13.fr LIPN (UMR CNRS 7030), Institut Galil\'ee, Universit\'e Paris 13, France}
}%fin_auteurs
\begin{document}
\maketitle

\begin{abstract}
In \cite{CT18}, we introduced a family of combinatorial designs, which we call {\em alphabet reduction pairs of arrays}, or ARPAs for short. 
These designs are parameterized by three integers $q,p,k$ with $p\leq q$ and $k\leq p$:
$q$ is the size of the alphabet $\set{\sigma_1, \sigma_2 ,\ldots, \sigma_q}$ from which the arrays draw their entries;
$p$ is the maximum number of distinct symbols allowed in a row of the second array;
$k$ is the largest integer for which the two arrays coincide---up to the order of their rows---on any $k$-element subset of their columns. 
The first array must contain at least one occurrence of the word $\sigma_1\ \sigma_2\ \cdots\ \sigma_q$ as a row. 
Intuitively, the goal is to cover the largest possible number of occurrences of this word of $q$ distinct symbols with the smallest possible number of words of at most $p$ different symbols.

ARPAs are related to the approximability of {\em Constraint Satisfaction Problems with bounded constraint arity} ($k$-CSPs). In this context, we are particularly interested in ARPAs in which the frequency of the word $\sigma_1\ \sigma_2\ \cdots\ \sigma_q$ is maximal. Our goal is precisely to study such ARPAs, which we call {\em optimal}.
To this end, we introduce a seemingly simpler family of combinatorial designs called {\em Cover pairs of arrays} (CPAs). The arrays of a CPA take Boolean entries, and must also coincide (up to the order of their rows) on any $k$-element subset of their columns. Intuitively, the idea behind these array pairs is to cover the largest possible number of occurrences of the word consisting of $q$ ones using the smallest possible number of Boolean words of length $q$ and weight at most $p$. 

We show that, when it comes to maximizing the frequency of their target word, either $\sigma_1\ \sigma_2\ \cdots\ \sigma_q$ or $1\ 1\ \cdots\ 1$, ARPAs and CPAs are equivalent. 
As a corollary of our proof, computing the frequency of $\sigma_1\ \sigma_2\ \cdots\ \sigma_q$ in optimal ARPAs reduces to solving a linear program in $q + p + 1$ continuous variables and $k + 1$ constraints. 
In addition, we prove the optimality of the ARPAs given in \cite{CT18} for the case $p=k$ and provide optimal ARPAs for the cases $k=1$ and $k=2$. 

~\\{\noindent}{\bf Keywords:} 
	Alphabet reduction pair of arrays;
	Cover pair of arrays;
	Approximability of $k$-CSPs;
	Combinatorial designs; 
	Differential approximation.
\end{abstract}

% ____________________________ Introduction
% _______ définition
% _______ motivation (CSPs & résultats liés C18)
% _______ outline (& notations)
\section{Introduction}
\begin{figure}[t]{\footnotesize
\caption{$(q, p)$-ARPAs of strength 2 and 3. 
We highlight in gray the rows of the form $0\ 1\ \cdots\ q-1$ in the array $Q$.}
\label{fig-Gamma-ex}
%% /!\/!\ l'ordre des lignes des tableaux ici doit être le même que dans Delta-ex /!\/!\
\begin{center}
\begin{tabular}{c|c}
$\begin{array}{c}
	(Q, P)\in\Gamma(4, 3, 2)\\[4pt]	%% (q, p, k) =(4, 3, 2)
	\begin{array}{ccc}\setlength\arraycolsep{1.5pt}
		\begin{array}{cccc}
			Q^0		&Q^1	&Q^2	&Q^3\\\hline
			\gr{0}	&\gr{1}	&\gr{2}	&\gr{3}\\
			\gr{0}	&\gr{1}	&\gr{2}	&\gr{3}\\
			0		&1		&0		&2\\
			0		&0		&2		&2\\
			3		&1		&2		&2\\
			3		&0		&0		&3\\
		\end{array}&&\setlength\arraycolsep{1.5pt}
		\begin{array}{cccc}
			P^0		&P^1	&P^2	&P^3\\\hline
			0		&1		&2		&2\\
			0		&1		&2		&2\\
			0		&1		&0		&3\\
			0		&0		&2		&3\\
			3		&1		&2		&3\\
			3		&0		&0		&2\\
		\end{array}
	\end{array}\\[-6pt]\\
	R^*(Q, P)/R(Q, P) =2/6 =1/3\\[-4pt]\\\hline\\[-6pt]
	(Q, P)\in\Gamma(5, 3, 2)\\[4pt]	%% (q, p, k) =(5, 3, 2)
	\begin{array}{ccc}\setlength\arraycolsep{1.5pt}	
		\begin{array}{ccccc}
			Q^0		&Q^1	&Q^2	&Q^3	&Q^4	\\\hline
			\gr{0}	&\gr{1}	&\gr{2}	&\gr{3}	&\gr{4}\\
			1		&2		&2		&1		&4	\\
			3		&3		&2		&2		&4	\\
			3		&3		&3		&3		&3	\\
			1		&1		&3		&1		&3	\\
			0		&2		&3		&2		&3	\\
		\end{array}&&\setlength\arraycolsep{1.5pt} 
		\begin{array}{ccccc}
			P^0		&P^1	&P^2	&P^3	&P^4\\\hline
			3		&3		&2		&3		&4	\\
			1		&1		&2		&1		&4	\\
			0		&2		&2		&2		&4	\\
			0		&1		&3		&3		&3	\\
			1		&2		&3		&1		&3	\\
			3		&3		&3		&2		&3	\\
		\end{array}
	\end{array}\\[-6pt]\\
	R^*(Q, P)/R(Q, P) =1/6
\end{array}$
&$\begin{array}{c}
	(Q, P)\in\Gamma(5, 4, 3)\\[4pt]	%% (q, p, k) =(5, 4, 3)
	\begin{array}{ccc}\setlength\arraycolsep{1.5pt}	
		\begin{array}{ccccc}
			Q^0	&Q^1&Q^2&Q^3&Q^4\\\hline
			0	&0	&1	&3	&4	\\
			0	&0	&2	&0	&4	\\
			0	&0	&2	&3	&3	\\
			0	&1	&1	&0	&4	\\
			0	&1	&1	&3	&3	\\
			0	&1	&2	&0	&3	\\
			\gr{0}&\gr{1}&\gr{2}&\gr{3}&\gr{4}\\
			\gr{0}&\gr{1}&\gr{2}&\gr{3}&\gr{4}\\
			\gr{0}&\gr{1}&\gr{2}&\gr{3}&\gr{4}\\
			4	&0	&1	&0	&3	\\
			4	&0	&1	&0	&3	\\
			4	&0	&2	&3	&4	\\
			4	&1	&1	&3	&4	\\
			4	&1	&2	&0	&4	\\
			4	&1	&2	&3	&3	\\
		\end{array}&&\setlength\arraycolsep{1.5pt} 
		\begin{array}{ccccc}
			P^0	&P^1&P^2&P^3&P^4\\\hline
			0	&0	&1	&0	&3\\
			0	&0	&2	&3	&4\\
			0	&0	&2	&3	&4\\
			0	&1	&1	&3	&4\\
			0	&1	&1	&3	&4\\
			0	&1	&2	&0	&4\\
			0	&1	&2	&0	&4\\
			0	&1	&2	&3	&3\\
			0	&1	&2	&3	&3\\
			4	&0	&1	&0	&4\\
			4	&0	&1	&3	&3\\
			4	&0	&2	&0	&3\\
			4	&1	&1	&0	&3\\
			4	&1	&2	&3	&4\\
			4	&1	&2	&3	&4\\
		\end{array}
	\end{array}\\[-6pt]\\
	R^*(Q, P)/R(Q, P) =3/15 =1/5
\end{array}$
\end{tabular}
\end{center}}
\end{figure}

\begin{figure}[t]{\footnotesize
\caption{Verifying that the upper-left array pair in \cref{fig-Gamma-ex} satisfies condition $(k_=)$.}
\label{fig-Gamma-k=}
\begin{center}
\begin{tabular}{c|c|c}
$\begin{array}{ccc}				%% ______________ {0, 1}
	\setlength\arraycolsep{1.5pt}	
	\begin{array}{cc}
		Q^0		&Q^1	\\\hline
		\gr{0}	&\gr{1}	\\
		\gr{0}	&\gr{1}	\\
		0		&1		\\
		0		&0		\\
		3		&1		\\
		3		&0		\\
	\end{array}&&\setlength\arraycolsep{1.5pt} 
	\begin{array}{cc}
		P^0		&P^1	\\\hline
		0		&1		\\
		0		&1		\\
		0		&1		\\
		0		&0		\\
		3		&1		\\
		3		&0		\\
	\end{array}
\end{array}$&$\begin{array}{ccc}	%% ______________ {0, 2}
	\setlength\arraycolsep{1.5pt}	
	\begin{array}{cc}
		Q^0		&Q^2	\\\hline
		\gr{0}	&\gr{2}	\\
		\gr{0}	&\gr{2}	\\
		0		&0		\\
		0		&2		\\
		3		&2		\\
		3		&0		\\
	\end{array}&&\setlength\arraycolsep{1.5pt} 
	\begin{array}{cc}
		P^0		&P^2	\\\hline
		0		&2		\\
		0		&2		\\
		0		&0		\\
		0		&2		\\
		3		&2		\\
		3		&0		\\
	\end{array}
\end{array}$&$\begin{array}{ccc}	%% ______________ {0, 3}
	\setlength\arraycolsep{1.5pt}	
	\begin{array}{cc}
		Q^0		&Q^3	\\\hline
		\gr{0}	&\gr{3}	\\
		\gr{0}	&\gr{3}	\\
		0		&2		\\
		0		&2		\\
		3		&2		\\
		3		&3		\\
	\end{array}&&\setlength\arraycolsep{1.5pt} 
	\begin{array}{cc}
		P^0		&P^3	\\\hline
		0		&2		\\
		0		&2		\\
		0		&3		\\
		0		&3		\\
		3		&3		\\
		3		&2		\\
	\end{array}
\end{array}$\\[-4pt]&&\\\hline&&\\[-4pt]
$\begin{array}{ccc}				%% ______________ {1, 2}
	\setlength\arraycolsep{1.5pt}	
	\begin{array}{cc}
		Q^1		&Q^2	\\\hline
		\gr{1}	&\gr{2}	\\
		\gr{1}	&\gr{2}	\\
		1		&0		\\
		0		&2		\\
		1		&2		\\
		0		&0		\\
	\end{array}&&\setlength\arraycolsep{1.5pt} 
	\begin{array}{cc}
		P^1		&P^2	\\\hline
		1		&2		\\
		1		&2		\\
		1		&0		\\
		0		&2		\\
		1		&2		\\
		0		&0		\\
	\end{array}
\end{array}$&$\begin{array}{ccc}	%% ______________ {1, 3}
	\setlength\arraycolsep{1.5pt}	
	\begin{array}{cc}
		Q^1		&Q^3	\\\hline
		\gr{1}	&\gr{3}	\\
		\gr{1}	&\gr{3}	\\
		1		&2		\\
		0		&2		\\
		1		&2		\\
		0		&3		\\
	\end{array}&&\setlength\arraycolsep{1.5pt} 
	\begin{array}{cc}
		P^1		&P^3	\\\hline
		1		&2		\\
		1		&2		\\
		1		&3		\\
		0		&3		\\
		1		&3		\\
		0		&2		\\
	\end{array}
\end{array}$&$\begin{array}{ccc}	%% ______________ {2, 3}
	\setlength\arraycolsep{1.5pt}	
	\begin{array}{cc}
		Q^2		&Q^3	\\\hline
		\gr{2}	&\gr{3}	\\
		\gr{2}	&\gr{3}	\\
		0		&2		\\
		2		&2		\\
		2		&2		\\
		0		&3		\\
	\end{array}&&\setlength\arraycolsep{1.5pt} 
	\begin{array}{cc}
		P^2		&P^3	\\\hline
		2		&2		\\
		2		&2		\\
		0		&3		\\
		2		&3		\\
		2		&3		\\
		0		&2		\\
	\end{array}
\end{array}$
\end{tabular}
\end{center}}
\end{figure}

% _____________________________________________  Définition ARPAs
For a positive integer $q$, we consider the set $\Sigma_q = \set{0, 1 ,\ldots, q-1}$ of $q$ symbols. In \cite{CT18}, we introduced a family of combinatorial designs, which we here call {\em alphabet reduction pairs of arrays} (ARPAs for short), along with the associated quantities of interest.

\begin{definition}[Alphabet reduction pairs of arrays]\label{def-Gamma}
Let $k >0$, $p\geq k$, and $q\geq p$ be three integers. 
Two arrays $Q$ and $P$ with $q$ columns on the symbol set $\Sigma_q$ form a {\em $(q, p)$-alphabet reduction pair of arrays (a $(q, p)$-ARPA for short) of strength $k$} if they satisfy:
\begin{itemize}
	\item[] $(\Gamma_Q)$ $Q$ contains at least one occurrence of the row $0\ 1\ \cdots\ q-1$;
	\item[] $(\Gamma_P)$ each row of $P$ involves at most $p$ distinct symbols;
	\item[] $(k_=)$ if we extract $k$ columns from $Q$ and the same $k$ columns from $P$, then we obtain the same array up to row permutation.
\end{itemize} 

$\Gamma(q, p, k)$ denotes the set of such pairs of arrays. Furthermore, let $R^*(Q, P)$ and $R(Q, P)$ refer to, respectively, the number of occurrences of the row $0\ 1\ \cdots\ q-1$ in $Q$, and the number of rows in $P$. Then we define $\gamma(q, p, k)$ as the highest ratio $R^*(Q, P)/R(Q, P	)$ attained over $\Gamma(q, p, k)$.
\end{definition}
Note that the condition $(k_=)$ ensures that the number of rows in $Q$ and $P$ is the same. 
\Cref{fig-Gamma-ex} shows $(q, p)$-ARPAs of strength $k$ for $(q, p, k)\in\set{(4, 3, 2), (5, 3, 2), (5, 4, 3)}$. 
For example, consider the upper-left pair of arrays in this figure. For this ARPA, we have $(q, p, k) =(4, 3, 2)$, $R(Q, P) =6$, and $R^*(Q, P) =2$. Furthermore, we can check in \cref{fig-Gamma-k=} that this pair of arrays satisfies condition $(k_=)$.

% _____________________________________________  Motivation & travaux antérieurs ARPAs
\subsection{Motivation and previous results} 

In an optimization {\em Constraint Satisfaction Problem} (CSP for short) over the alphabet $\Sigma_q$, the objective is to assign values from $\Sigma_q$ to variables so as to maximize a weighted sum of constraint evaluations. We denote by $\mathsf{k\,CSP\!-\!q}$ the CSP over $\Sigma_q$ where each constraint involves at most $k$ variables. 
For a positive integer $q$ and a CSP instance $I$ over $\Sigma_q$, we denote by $\mathrm{opt}(I)$ its optimal value, by $\mathrm{wor}(I)$ its worst solution value, and by $\mathrm{opt}_p(I)$ the optimal value when restricting to solutions whose coordinates involve at most $p <q$ distinct values. 
A value $\mathrm{apx}(I)$ \emph{approximates $\mathrm{opt}(I)$ within a differential factor $\rho$}, where $\rho\in(0, 1]$, if it satisfies $(\mathrm{apx}(I) -\mathrm{wor}(I))/(\mathrm{opt}(I) -\mathrm{wor}(I))\geq\rho$. 
By extension, a solution $x$ of $I$ is \emph{$\rho$-differential approximate} if its objective value approximates $\mathrm{opt}(I)$ within a differential factor of $\rho$. 
For some insight into the differential approximation measure, the approximability of $k$-CSPs, or more specifically their differential approximability see, {\em e.g.}, \cite{ABMV77,AAP80,BR95,DP96,MM17,N98,EP05,CT18,CT26-E}.  

ARPAs are related to CSPs in that the higher the ratio $R^*(Q, P)/R(Q, P)$ is in an ARPA of $\Gamma(q, p, k)$, the stronger the guarantee that $\mathrm{opt}_p(I)$ approximates $\mathrm{opt}(I)$, and the more we benefit, for $\mathsf{k\,CSP\!-\!q}$, from using an approximation algorithm for $\mathsf{k\,CSP\!-\!p}$.

\begin{theorem}[\cite{CT18}]\label{thm-CT18}
For all constant integers $k\geq 2$, $p\geq k$, and $q\geq p$, on any instance $I$ of $\mathsf{k\,CSP\!-\!q}$, the best solutions among those whose components take at most $p$ distinct values are $\gamma(q, p, k)$-differential approximate. Formally, $\mathrm{opt}_p(I)$ satisfies:
$$\begin{array}{rl}
	(\mathrm{opt}_p(I) - \mathrm{wor}(I))/(\mathrm{opt}(I) - \mathrm{wor}(I))	&\geq \gamma(q, p, k).
\end{array}$$

Moreover, if there exists a polynomial-time algorithm for $\mathsf{k\,CSP\!-\!p}$ that computes $\rho$-differential approximate solutions (where $\rho\in(0, 1]$), it can be used for $\mathsf{k\,CSP\!-\!q}$ to compute solutions that are $\rho\times\gamma(q, p, k)$-differential approximate within polynomial time.
\end{theorem}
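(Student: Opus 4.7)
The plan is to use any ARPA $(Q,P) \in \Gamma(q, p, k)$ to turn an optimal solution $x^*$ of $I$ into a family of $R := R(Q,P)$ candidate solutions, each using at most $p$ distinct values. Set $R^* := R^*(Q,P)$ and $\gamma := R^*/R$, and pick $(Q,P)$ with $\gamma$ equal to, or arbitrarily close to, $\gamma(q,p,k)$. For each row index $r \in \{1, \ldots, R\}$ and each variable $i$, define
\[ y^r_i := P^{x^*_i}_r \quad \text{and} \quad z^r_i := Q^{x^*_i}_r. \]
By $(\Gamma_P)$, each $y^r$ takes values only among the at most $p$ symbols of row $r$ of $P$, so it is a solution of $I$ with at most $p$ distinct values, witnessing $\mathrm{opt}_p(I) \geq I(y^r)$. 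By $(\Gamma_Q)$, $z^r = x^*$ on each of the (at least) $R^*$ rows of $Q$ equal to $0\ 1\ \ldots\ q-1$, so $I(z^r) = \mathrm{opt}(I)$ there, while $I(z^r) \geq \mathrm{wor}(I)$ in general.

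The heart of the argument is the identity $\sum_r I(y^r) = \sum_r I(z^r)$. Fix a constraint $f_C$ on variables $(i_1, \ldots, i_{k'})$ with $k' \leq k$. The sum $\sum_r f_C(P^{x^*_{i_1}}_r, \ldots, P^{x^*_{i_{k'}}}_r)$ is a deterministic functional of the multiset over $r$ of row restrictions of $P$ to the set $J \subseteq \{0, \ldots, q-1\}$ of distinct columns indexed by the $x^*_{i_j}$, and similarly for $Q$. Since $|J| \leq k$, and $(k_=)$ implies the analogous equality for every $k'' \leq k$ (project the multisets of restrictions to a $k$-superset of $J$ down onto $J$), the $P$- and $Q$-multisets on $J$ coincide, so the $y$- and $z$-sums agree term by term; summing over constraints yields the identity. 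Combining with the $z$-bounds above,
\[ R \cdot \max_r I(y^r) \geq \sum_r I(y^r) = \sum_r I(z^r) \geq R^* \mathrm{opt}(I) + (R - R^*) \mathrm{wor}(I), \]
hence $\mathrm{opt}_p(I) \geq \max_r I(y^r) \geq \gamma \mathrm{opt}(I) + (1-\gamma) \mathrm{wor}(I)$, and letting $\gamma \to \gamma(q,p,k)$ settles the first assertion.

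For the reduction I would enumerate the $\binom{q}{p}$ $p$-subsets $S \subseteq \Sigma_q$ and, for each $S$, run the assumed $k$-CSP-$p$ algorithm on the restriction $I_S$ of $I$ to assignments valued in $S$ (identified with a $k$-CSP-$p$ instance via any bijection $S \leftrightarrow \Sigma_p$), obtaining $w^S$ with $I(w^S) \geq \rho\,\mathrm{opt}_S(I) + (1-\rho)\,\mathrm{wor}_S(I) \geq \rho\,\mathrm{opt}_S(I) + (1-\rho)\,\mathrm{wor}(I)$, using that restriction cannot lower the worst value. The averaging argument above exhibits some $y^r$ with value at least $\gamma \mathrm{opt}(I) + (1-\gamma) \mathrm{wor}(I)$ whose values sit inside a $p$-subset $S^*$; thus $\mathrm{opt}_{S^*}(I) \geq \gamma \mathrm{opt}(I) + (1-\gamma) \mathrm{wor}(I)$, and $I(w^{S^*}) \geq \rho \gamma \mathrm{opt}(I) + (1 - \rho \gamma) \mathrm{wor}(I)$, so returning the best of the $w^S$ delivers the claimed $\rho\,\gamma(q,p,k)$ guarantee. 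The main delicacy I anticipate is precisely the averaging identity: $(k_=)$ is stated for $k$-subsets of distinct columns, whereas the tuple $(x^*_{i_1}, \ldots, x^*_{i_{k'}})$ generally has repeating entries, so care is needed to reduce the count of distinct columns to $|J| \leq k$ via projection of the strength-$k$ multisets.
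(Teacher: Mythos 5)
Your argument is correct. This theorem is imported from \cite{CT18} and the present paper gives no proof of it, so there is nothing to compare against line by line; but your construction --- reading off, for each row $r$ of the pair, the assignments $y^r_i = P^{x^*_i}_r$ and $z^r_i = Q^{x^*_i}_r$, using $(k_=)$ (projected from a $k$-superset of the at most $k$ distinct columns touched by a constraint) to get $\sum_r I(y^r) = \sum_r I(z^r)$, and then averaging --- is exactly the intended use of these designs as described in the paper's discussion, and your handling of the two delicate points (repeated columns in a constraint's image under $x^*$, and the passage from the first assertion to the reduction via enumeration of the $\binom{q}{p}$ value sets) is sound.
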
 
Hence, as far as CSPs are concerned, we are interested in the frequency of the row $0\ 1\ \cdots\ q-1$ in the array $Q$. 
The following bounds are known for $\gamma(q, p, k)$:

\begin{theorem}[\cite{CT18}]\label{thm-gamma_qpk}
Let $k > 0$, $p\geq k$, and $q\geq p$ be three integers. 
If $p = q$, then $\gamma(q, q, k) = 1$. 
If $q > p > k$, then $\gamma(q, p, k)\geq\gamma(q-p +k, k, k)$.
If $p = k < q$, then 
\begin{align}\label{eq-qkk-LB}
\gamma(q, k, k) &\textstyle
	\geq 2/(\sum_{r =0}^k \binom{q}{r}\binom{q-1 -r}{k-r} +1).
\end{align}
\end{theorem}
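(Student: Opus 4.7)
The plan is to treat the three cases separately; the first two reduce to small constructions and the third is the substantive case.

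For $p = q$, take $(Q, P)$ to be the pair consisting of a single common row $0\ 1\ \ldots\ q - 1$ in both arrays. This lies in $\Gamma(q, q, k)$ and realises ratio $R^*(Q, P)/R(Q, P) = 1$. Since $(k_=)$ forces $Q$ and $P$ to have the same number of rows, we always have $R^*(Q, P) \leq R(Q, P)$, so $\gamma(q, q, k) \leq 1$, whence $\gamma(q, q, k) = 1$.

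For $q > p > k$, I would lift any pair $(Q', P') \in \Gamma(q - p + k, k, k)$ to a pair $(Q, P) \in \Gamma(q, p, k)$ of the same ratio. Concretely, I would prepend $p - k$ constant columns to both arrays, filled respectively with the symbols $0, 1, \ldots, p - k - 1$ (identically in $Q$ and $P$), and shift every original entry by $+(p - k)$ so that the suffix portion takes values in $\set{p - k, \ldots, q - 1}$. Each row of $P$ then contains at most $(p - k) + k = p$ distinct symbols; the row $0\ 1\ \ldots\ q - 1$ of $Q$ occurs exactly where the row $0\ 1\ \ldots\ q - p + k - 1$ occurred in $Q'$, so the ratio is preserved. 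Condition $(k_=)$ passes through because any $k$ chosen columns split into a constant prefix portion (trivially with matching multisets in $Q$ and $P$) and a suffix portion of size $k' \leq k$ on which the multisets coincide by $(k_=)$ applied to $(Q', P')$, using the standard fact that $(k_=)$ implies $(k'_=)$ for every $k' \leq k$ by summation over the projected-out coordinates.

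The substantive case is $p = k < q$, where I would need an explicit pair $(Q, P) \in \Gamma(q, k, k)$ with $R^*(Q, P) = 2$ and $R(Q, P) = \sum_{r = 0}^{k} \binom{q}{r}\binom{q - 1 - r}{k - r} + 1$. The two occurrences of the diagonal row $0\ 1\ \ldots\ q - 1$ in $Q$ supply the numerator $2$; the other $D - 2$ rows of $Q$ and all $D$ rows of $P$ must be designed so that every $k$-column marginal agrees. The shape of the denominator strongly suggests indexing the non-diagonal rows of $P$ by pairs $(S, T)$ where $S$ is an $r$-subset of $\set{0, \ldots, q - 1}$ (with $0 \leq r \leq k$) and $T$ is a $(k - r)$-subset of $\set{0, \ldots, q - 1} \setminus (S \cup \set{x})$ for a fixed distinguished element $x$ (accounting for the factors $\binom{q}{r}\binom{q - 1 - r}{k - r}$), with the lone ``$+1$'' row acting as a balancer, and with a mirrored set of rows in $Q$. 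The main obstacle, which I expect to occupy most of the proof, is verifying $(k_=)$: for every $k$-subset of columns $j_1 < \cdots < j_k$ and every symbol tuple $(a_1, \ldots, a_k)$, the multiplicity of $(a_1, \ldots, a_k)$ in the restricted arrays must coincide. I would carry out this verification by case analysis on how many coordinates $i$ satisfy $a_i = j_i$ (i.e.\ agree with the diagonal), the binomials $\binom{q}{r}\binom{q - 1 - r}{k - r}$ supplying the exact bookkeeping for each overlap pattern, and the ``$+1$'' absorbing the single residual imbalance produced by having two copies of the diagonal row in $Q$.
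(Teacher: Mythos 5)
Your handling of the two easy cases is correct. For $p = q$ the single-row pair works, and the upper bound $R^*(Q,P)\leq R(Q,P)$ is immediate. For $q > p > k$ your construction is the mirror image of the one indicated in the paper (which \emph{appends} to each row of a $(q-p+k,k)$-ARPA the constant suffix $q-p+k\ \ q-p+k+1\ \ldots\ q-1$, whereas you prepend a constant prefix and shift); the two are equivalent up to relabelling columns and symbols, and your verification of $(\Gamma_P)$, of the preservation of the ratio, and of $(k_=)$ via the observation that $(k_=)$ implies $(k'_=)$ for $k'\leq k$ is sound.

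The case $p = k < q$ is where the entire substance of the theorem lies, and there you have not given a proof. You infer from the shape of the denominator $\sum_{r=0}^{k}\binom{q}{r}\binom{q-1-r}{k-r}+1$ a plausible indexing of the rows by pairs $(S,T)$, but you never specify the actual entries of $Q$ and $P$ (which symbol of $\Sigma_q$ occupies column $j$ in the row associated with $(S,T)$, and how such a row is arranged to use at most $k$ distinct symbols), you do not define the mirrored rows of $Q$ or the balancer row, and you explicitly defer the verification of $(k_=)$ --- which you yourself identify as the main obstacle --- to a case analysis that is not carried out. A lower bound on $\gamma(q,k,k)$ requires an exhibited pair of arrays together with a complete check of $(\Gamma_Q)$, $(\Gamma_P)$ and $(k_=)$; a description of the bookkeeping one expects to perform is not a substitute, and it is not even established that your direct $(S,T)$-indexed design can be completed consistently. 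Note that the construction in \cite{CT18} is recursive, starting from the trivial pair $Q = P = (0\ 1\ \ldots\ k-1)$ and increasing $q$, rather than a one-shot design; alternatively, the present paper's machinery (Theorem \ref{thm-delta_opt} with $i_r = r$ followed by Theorem \ref{thm-reg_joker}) yields explicit optimal ARPAs for $p=k$, but that route rests on the full linear-programming analysis rather than on the form of the denominator alone.
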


From \cref{thm-CT18,thm-gamma_qpk}, and the fact that $\mathsf{2\,CSP\!-\!2}$ is approximable within a differential ratio of $2-\pi/2$ \cite{N98}, we deduced in \cite{CT18} that, for all integers $q > 2$, $\mathsf{2\,CSP\!-\!q}$ is approximable within a differential ratio of $(2-\pi/2)/(q-1)^2$.

% _____________________________________________  Outline
\subsection{Outline and notations}

In \cite{CT18}, we identified two issues concerning the lower bounds given in \cref{thm-gamma_qpk}: proving that, as we conjectured, the stated bound is tight for the case $p =k$; providing finer estimates of $\gamma(q, p, k)$ when $p >k$.
 %
% _____________________________________________  Intro CPAs
Essentially, when computing $\gamma(q, p, k)$, our goal is to cover as many occurrences of the row $0\ 1\ \cdots\ q-1$ as possible, using the smallest collection of rows, each with coordinates taking at most $p$ distinct values. Given an index $j\in\Sigma_q$, the most critical aspect of an entry $a$ in a column $j$ of $Q$ or $P$ is whether it matches $j$ or not. When $a \neq j$, the specific value of $a$ becomes less significant, except for ensuring that the subarrays of $k$ columns of $Q$ and $P$ coincide. These considerations lead us to introduce the following family of combinatorial designs:

% ________________ Tables de couverture :: définition
\begin{definition}[Cover pair of arrays]\label{def-Delta}
Let $k >0$, $d\geq k$, and $\nu\geq d$ be three integers.
Two arrays $N$ and $D$ with $\nu$ columns on the symbol set $\set{0, 1}$ form a {\em $(\nu, d)$-cover pair of arrays (a $(\nu, d)$-CPA for short) of strength $k$} if they satisfy condition $(k_=)$ and:
\begin{itemize}\itemsep0pt
	\item[] $(\Delta_N)$ $N$ contains at least one occurrence of the row $1\ 1\ \cdots\ 1$;
	\item[] $(\Delta_D)$ each row of $D$ contains at most $d$ 1s.
\end{itemize} 

$\Delta(\nu, d, k)$ denotes the set of such pairs of arrays. Furthermore, let $R^*(N, D)$ and $R(N, D)$ refer to, respectively, the number of occurrences of the row $1\ 1\ \cdots\ 1$ in $N$, and the number of rows in $D$.
Then we define $\delta(\nu, d, k)$ as the maximum ratio $R^*(N, D)/R(N, D)$ over $\Delta(\nu, d, k)$.
\end{definition}

\Cref{fig-Delta-ex} shows CPAs of strength 2 and 3.
% _____________________________________________  Outline
The manuscript is organized as follows:
In \cref{sec-cpa}, we show that the lower bound for $\gamma(q, k, k)$ given in \cite{CT18} is an upper bound for $\delta(q, k, k)$ (\cref{thm-delta_d=k-UB}), and thus the exact value of $\gamma(q, k, k)$. 
We also introduce a subfamily of CPAs, called {\em regular}, which will be used in the rest of the paper. 
In \cref{sec-D2G}, we show how to derive from regular CPAs ARPAs of the same strength (\cref{thm-reg_joker}). 
In \cref{sec-LP}, we provide a characterization of regular CPAs that maximize the frequency of the all-1 row using linear programming (\cref{thm-delta_opt}). 
In \cref{sec-opt}, we conclude that computing $\gamma(q, p, k)$ reduces to computing $\delta(q, p, k)$ (\cref{cor-gamma=delta}). 
In addition, we derive the exact value of $\gamma(q, k, k)$, $\gamma(q, p, 2)$, and $\gamma(q, p, 1)$ from the results of the previous sections (\cref{cor-delta-p=k+k<=2}). 
Finally, in \cref{sec-conc}, we briefly discuss the results obtained and directions for further research.

%% thm-delta_opt
But first, in \cref{sec-lem_tech}, we present a technical lemma that will be used in subsequent arguments, namely in the proof of \cref{thm-delta_d=k-UB,prop-Delta_PL-SB}.

% _____________________________________________  Notations
\medskip
{\bf{\em Notations.}}
For a positive integer $\nu$, $[\nu]$ denotes the integer interval $\set{1, 2 ,\ldots, \nu}$.
Unless otherwise specified, we index the coordinates of rows occurring in $(q, p)$-ARPAs by $\Sigma_q$, and the coordinates of rows occurring in $(\nu, d)$-CPAs by $[\nu]$. 
The rows of an array with $R$ rows are indexed by $[R]$. 
For an array $M$, $M_r$ and $M^j$ denote its row of index $r$ and its column of index $j$, respectively. 

\begin{figure}[t]{\footnotesize
\caption{$(\nu, d)$-CPAs of strength 2 and 3. 
We highlight in gray the rows of the form $1\ 1\ \cdots\ 1$ in the array $N$.}
\label{fig-Delta-ex}
\begin{center}
%% /!\/!\ l'ordre des lignes des tableaux ici doit être le même que dans Gamma-ex /!\/!\
\begin{tabular}{c|c}
$\begin{array}{c}
	(N, D)\in\Delta(4, 3, 2)\\[4pt]	%% (\nu, d, k) =(4, 3, 2)
	\begin{array}{ccc}\setlength\arraycolsep{1.5pt}
		\begin{array}{cccc}
			N^1		&N^2	&N^3	&N^4\\\hline
			\gr{1}&\gr{1}&\gr{1}&\gr{1}\\
			\gr{1}&\gr{1}&\gr{1}&\gr{1}\\
			1		&1		&0		&0	\\
			1		&0		&1		&0	\\
			0		&1		&1		&0	\\
			0		&0		&0		&1	\\
		\end{array}&&\setlength\arraycolsep{1.5pt} 
		\begin{array}{cccc}
			D^1		&D^2	&D^3	&D^4\\\hline
			1		&1		&1		&0	\\
			1		&1		&1		&0	\\
			1		&1		&0		&1	\\
			1		&0		&1		&1	\\
			0		&1		&1		&1	\\
			0		&0		&0		&0	\\
		\end{array} 
	\end{array}\\[-6pt]\\
	R^*(N, D)/R(N, D) =2/6 =1/3\\[-4pt]\\\hline\\[-6pt]
	(N, D)\in\Delta(5, 3, 2)\\[4pt] %% (\nu, d, k) =(5, 3, 2)
	\begin{array}{ccc}\setlength\arraycolsep{1.5pt}	
		\begin{array}{ccccc}
			N^1		&N^2	&N^3	&N^4	&N^5\\\hline
			\gr{1}	&\gr{1}	&\gr{1}	&\gr{1}	&\gr{1}\\
			0		&0		&1		&0		&1	\\
			0		&0		&1		&0		&1	\\
			0		&0		&0		&1		&0	\\
			0		&1		&0		&0		&0	\\
			1		&0		&0		&0		&0	\\
		\end{array}&&\setlength\arraycolsep{1.5pt} 
		\begin{array}{ccccc}
			D^1		&D^2	&D^3	&D^4	&D^5\\\hline
			0		&0		&1		&1		&1	\\
			0		&1		&1		&0		&1	\\
			1		&0		&1		&0		&1	\\
			1		&1		&0		&1		&0	\\
			0		&0		&0		&0		&0	\\
			0		&0		&0		&0		&0	\\
		\end{array} 
	\end{array}\\[-6pt]\\
	R^*(N, D)/R(N, D) =1/6
\end{array}$
&$\begin{array}{c}
	(N, D)\in\Delta(5, 4, 3)\\[4pt]	%% (\nu, d, k) =(5, 4, 3)
	\begin{array}{ccc}\setlength\arraycolsep{1.5pt}		
		\begin{array}{ccccc}
			N^1	&N^2&N^3&N^4&N^5\\\hline
			1	&0	&0	&1	&1	\\ % 0 3 4
			1	&0	&1	&0	&1	\\ % 0 2 4
			1	&0	&1	&1	&0	\\ % 0 2 3
			1	&1	&0	&0	&1	\\ % 0 1 4
			1	&1	&0	&1	&0	\\ % 0 1 3
			1	&1	&1	&0	&0	\\ % 0 1 2 
			\gr{1}&\gr{1}&\gr{1}&\gr{1}&\gr{1}	\\
			\gr{1}&\gr{1}&\gr{1}&\gr{1}&\gr{1}	\\
			\gr{1}&\gr{1}&\gr{1}&\gr{1}&\gr{1}	\\
			0	&0	&0	&0	&0	\\
			0	&0	&0	&0	&0	\\
			0	&0	&1	&1	&1	\\ % 2 3 4
			0	&1	&0	&1	&1	\\ % 1 3 4
			0	&1	&1	&0	&1	\\ % 1 2 4
			0	&1	&1	&1	&0	\\ % 1 2 3
		\end{array}&&\setlength\arraycolsep{1.5pt} 
		\begin{array}{ccccc}
			D^1	&D^2&D^3&D^4&D^5\\\hline
			1	&0	&0	&0	&0	\\ % 0
			1	&0	&1	&1	&1	\\ % 0 2 3 4
			1	&0	&1	&1	&1	\\ % 0 2 3 4
			1	&1	&0	&1	&1	\\ % 0 1 3 4
			1	&1	&0	&1	&1	\\ % 0 1 3 4
			1	&1	&1	&0	&1	\\ % 0 1 2 4
			1	&1	&1	&0	&1	\\ % 0 1 2 4
			1	&1	&1	&1	&0	\\ % 0 1 2 3
			1	&1	&1	&1	&0	\\ % 0 1 2 3
			0	&0	&0	&0	&1	\\ % 4
			0	&0	&0	&1	&0	\\ % 3
			0	&0	&1	&0	&0	\\ % 2
			0	&1	&0	&0	&0	\\ % 1
			0	&1	&1	&1	&1	\\ % 1 2 3 4
			0	&1	&1	&1	&1	\\ % 1 2 3 4
		\end{array} 
	\end{array}\\[-6pt]\\
	R^*(N, D)/R(N, D) =3/15 =1/5
\end{array}$
\end{tabular}
\end{center}}
\end{figure}

% ____________________________ Lemme technique
\section{Technical lemma}\label{sec-lem_tech}
\begin{lemma}\label{lem-fAB}
For all sets $A$ and $B$ of real numbers such that $|B| > |A|$, we have:
\begin{align}\label{eq-fAB}
f(A, B)	\textstyle
	:=\sum_{i\in B}\frac{\prod_{a\in A}(i-a)}{\prod_{b\in B\setminus\set{i}} (i-b)}
		&=\begin{cases}
			1	&\text{if $|B| = |A| + 1$}\\
			0	&\text{if $|B| > |A| + 1$}
		\end{cases}.
\end{align}
\end{lemma}

\begin{proof}
We show that for all $A, B\subseteq\mathbb{R}$ with $|B| > |A|$, the quantities $f(A, B)$ satisfy the following recurrence relation and initial conditions:
\begin{align}\label{eq-f_A_B-rec}
f(A, B)	
		&=\begin{cases}
				1	&\text{if $A =\emptyset\text{ and }|B| =1$},\\
				0	&\text{if $A =\emptyset\text{ and }|B| >1$},\\
				f(A\setminus\set{a}, B\setminus\set{b})
					+(b-a)f(A\setminus\set{a}, B),\ a\in A,\ b\in B	
				&\text{if $A\neq\emptyset$}.
		\end{cases}
\end{align}

When $A = \emptyset$, we consider the polynomial:
$$\begin{array}{rl}
	P(X)	&:=\sum_{i \in B}\prod_{b\in B\setminus\set{i}} \frac{X-b}{i-b}.
\end{array}$$
$P$ is the Lagrange interpolating polynomial at the nodes $(b, 1)$ for $b\in B$. 
%% (tous les produits sont nuls, sauf celui pour lequel $b_i = b$ qui vaut 1).  
By construction, this polynomial has degree at most $|B|-1$, and takes the value $1$ at each $b\in B$. 
We deduce that $P$ is the constant polynomial equal to 1. 
Note that $f(\emptyset, B)$ is the coefficient of $X^{|B|-1}$ in $P(X)$. 
Since $P(X) = 1$, this coefficient is $1$ when $|B| = 1$, and $0$ otherwise. 
 
When $A\neq\emptyset$, for $(a, b)\in A\times B$, we write successively:
$$\begin{array}{rl}
f(A, B)
	&=\sum_{i\in B\setminus\set{b}}	 \frac{\prod_{a'\in A}(i-a')}{\prod_{b'\in B\setminus\set{i}} (i-b')}
									+\frac{\prod_{a'\in A}(b-a')}{\prod_{b'\in B\setminus\set{b}} (b-b')}\\[6pt]
	&=\sum_{i\in B\setminus\set{b}} 
		\frac{(i-b+b-a)\times\prod_{a'\in A\setminus\set{a}}(i-a')}{\prod_{b'\in B\setminus\set{i}} (i-b')}
			+\frac{(b-a)\times\prod_{a'\in A\setminus\set{a}}(b-a')}{\prod_{b'\in B\setminus\set{b}} (b-b')}\\[6pt]
	&=\sum_{i\in B\setminus\set{b}}	
							\frac{\prod_{a'\in A\setminus\set{a}}(i-a')}{\prod_{b'\in B\setminus\set{b, i}} (i-b')}
		+(b-a)\times\sum_{i\in B}\frac{\prod_{a'\in A\setminus\set{a}}(i-a')}{\prod_{b'\in B\setminus\set{i}} (i-b')}.
\end{array}$$

Thus, relation \cref{eq-f_A_B-rec} holds.
It follows from \cref{eq-f_A_B-rec} that $f(A, B)$ can be expressed as a weighted sum of terms of the form $f(\emptyset, B')$ with $B'\subseteq B$ and $|B'|\in\set{|B|-|A|, |B|-|A|+1 ,\ldots, |B|}$, including a single term with $|B'| = |B|-|A|$ whose coefficient is $1$. We conclude that $f(A, B) = 1$ if $|B|-|A|=1$ and $0$ otherwise.
\end{proof}

% ____________________________ Delta
% ______________ Delta
% _______ Définition
% _______ De Gamma vers Delta rég (trivial)
% _______ Borne sup gamma(q, k, k) via delta(q, k, k)
% _______ Delta rég
\section{Cover pairs of arrays}\label{sec-cpa}
% ___________________________________________________ ARPA->CPA (dont UB)
\subsection{From alphabet reduction to cover pairs of arrays}
 %
% ________________ De ARPA vers CPA
An ARPA can naturally be interpreted as a CPA. 
For instance, the ARPAs of \cref{fig-Gamma-ex} can be interpreted as the CPAs of \cref{fig-Delta-ex}.

\begin{definition}\label{def-G_D}
For a positive integer $q$, we define the surjective map $\pi_q:\Sigma_q^q\rightarrow\set{0, 1}^q$ that maps a word $u = (u_0, u_1 ,\ldots, u_{q-1})$ of $\Sigma_q^q$ to the word 
$\pi_q(u) =(\pi_q(u)_1, \pi_q(u)_2 ,\ldots, \pi_q(u)_q)$ of $\set{0, 1}^q$ defined by
$$\begin{array}{rll}
	\pi_q(u)_j &=\begin{cases}
		1		&\text{if $u_{j-1}=j-1$},	\\
		0		&\text{otherwise},
	\end{cases}	&j\in[q].
\end{array}$$

By extension, given an array $M$ with rows in $\Sigma_q^q$, we define $\pi_q(M)$ as the array obtained from $M$ by applying $\pi_q$ to each of its rows. 

For two pairs of arrays, $(Q, P)$ with rows in $\Sigma_q^q$ and $(N, D)$ with rows in $\set{0, 1}^q$, we say that $(Q, P)$ \emph{can be interpreted as} $(N, D)$ if we can order their rows in such a way that $(\pi_q(Q), \pi_q(P)) = (N, D)$.
\end{definition}

\begin{proposition}\label{prop-Gamma_2_Delta}
Let $k > 0$, $p \geq k$, and $q \geq p$ be three integers. Then for all $(Q, P)\in\Gamma(q, p, k)$, there exists $(N, D)\in\Delta(q, p, k)$ such that $R^*(N, D) = R^*(Q, P)$ and $R(N, D) = R(Q, P)$.
\end{proposition}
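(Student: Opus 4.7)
The natural approach is to set $N =\pi_q(Q)$ and $D =\pi_q(P)$, that is, to exhibit the pair $(Q, P)$ as being interpretable as $(N, D)$ in the sense of Definition \ref{def-G_D}. Since $\pi_q$ operates row by row, we immediately have $R(N, D) = R(Q, P)$, and since for any row $u \in \Sigma_q^q$ the image $\pi_q(u)$ equals $1\ 1\ \ldots\ 1$ if and only if $u = 0\ 1\ \ldots\ q-1$, we also obtain $R^*(N, D) = R^*(Q, P)$ and, together with $(\Gamma_Q)$, condition $(\Delta_N)$.

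The substantive content is to verify the remaining two conditions $(\Delta_D)$ and $(k_=)$. For $(\Delta_D)$, the key observation is that a $1$ appears in position $j \in [q]$ of $\pi_q(P_r)$ precisely when $P_r$ takes the value $j -1$ in column $j -1$; since distinct columns correspond to distinct candidate symbols $j -1$, the number of $1$s in $\pi_q(P_r)$ is bounded by the cardinality of $\{P_{r,0}, \ldots, P_{r,q-1}\}$, which by $(\Gamma_P)$ is at most $p$. For $(k_=)$, the crucial fact is that $\pi_q$ is \emph{columnwise}: for each column index $j$, the coefficient in column $j$ of $\pi_q(M)$ depends only on the coefficient in column $j$ of $M$ (via the indicator test $\cdot = j -1$). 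Consequently, extracting $k$ columns commutes with applying $\pi_q$, so any row permutation that realizes the equality of the $k$-column sub-arrays of $Q$ and $P$ realizes the equality of the corresponding sub-arrays of $N$ and $D$ as well.

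The proof is thus essentially a verification, with no real obstacle; the only point requiring any care is the counting argument for $(\Delta_D)$, where one must notice that distinct columns of $P$ test for distinct symbols, so that each $1$ in $\pi_q(P_r)$ consumes one of the (at most $p$) distinct symbols appearing in $P_r$. The rest follows directly from the fact that $\pi_q$ acts coordinate by coordinate.
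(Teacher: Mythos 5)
Your proposal is correct and follows exactly the paper's own argument: defining $(N, D) = (\pi_q(Q), \pi_q(P))$, bounding the weight of each row of $D$ by the number of distinct symbols in the corresponding row of $P$, and observing that the columnwise nature of $\pi_q$ preserves $(k_=)$. You merely spell out the $(k_=)$ step that the paper dismisses as clear.
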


\begin{proof}
Define $(N, D)$ as $\left(\pi_q(Q), \pi_q(P)\right)$. 
By definition of $\pi_q$, the number of rows in $N$ and $D$ is the same as in $Q$ and $P$, and the row of 1s occurs in $N$ as often as the row $0\ 1\ \cdots\ q-1$ occurs in $Q$. 
Furthermore, $(N, D)$ clearly satisfies $(k_=)$. For $(\Delta_D)$, we observe that the number of distinct symbols in a word $u = (u_0, u_1 ,\ldots, u_{q-1})\in\Sigma_q^q$ is an upper bound on the number of coordinates $u_j$ of $u$ satisfying $u_j = j$, and thus, on the number of 1s in the word $\pi_q(u)$. 
\end{proof}

% ________________ UB
We now prove that when $p = k$, the lower bound for $\gamma(q, k, k)$ given in \cref{thm-gamma_qpk} is an upper bound for $\delta(q, k, k)$. 
\begin{theorem}\label{thm-delta_d=k-UB}
Let $k > 0$ and $\nu > k$ be integers. Then every $(\nu, k)$-CPA $(N, D)$ of strength $k$ satisfies
	$R^*(N, D)/R(N, D) \leq 2/(\sum_{h = 0}^k \binom{\nu}{h}\binom{\nu-1-h}{k-h}+1)$.
\end{theorem}

\begin{proof}
First consider three integers $k > 0$, $d\geq k$, and $\nu\geq d$, and a CPA $(N, D)\in\Delta(\nu, d, k)$.
Given $i\in\set{0, 1 ,\ldots, \nu}$, we denote by $b_i$ the number of rows with $i$ 1s in $N$ and, if $i\leq d$, by $a_i$ the corresponding number in $D$. 
In particular, $b_\nu = R^*(N, D)$ and $\sum_{i = 0}^\nu b_i+\sum_{i = 0}^d a_i = 2R(N, D)$. 

The array pair $(N, D)$ satisfies $(k_=)$ if and only if, for any $k$ integers $j_1, j_2 ,\ldots, j_k\in[\nu]$ and any $w\in\set{0, 1}^k$, there are as many rows $u = (u_1, u_2 ,\ldots, u_\nu)$ satisfying $(u_{j_1}, u_{j_2} ,\ldots, u_{j_k}) = w$ in $D$ as in $N$. 
For $H := \set{j_i : i\in[k] \wedge w_i = 1}$ and $L := \set{j_i : i\in[k] \wedge w_i = 0}$, we have $(u_{j_1}, u_{j_2} ,\ldots, u_{j_k}) = w$ if and only if the coordinates of $u$ indexed by $H$ are all equal to 1, and the coordinates of $u$ indexed by $L$ are all equal to 0.
Thus, $(N, D)$ satisfies $(k_=)$ if and only if, for any two subsets $H, L$ of $[\nu]$ such that $H\cap L = \emptyset$ and $\card{H \cup L} = k$, the number of rows $u = (u_1, u_2 ,\ldots, u_\nu)$ satisfying $u_H = (1, 1 ,\ldots, 1)$ and $u_L = (0, 0 ,\ldots, 0)$ is the same in both arrays. 
For a fixed cardinality $h\in\set{0, 1 ,\ldots, k}$ of $H$, summing these equalities over all possible pairs $(H, L)$ yields the equality:
\begin{align}\label{eq-sum_h}
\begin{array}{l}
	\sum_{H, L\subseteq [\nu]: H \cap L = \emptyset, |H| = h, |L| = k-h}
		\card{\set{r\in[R(N, D)]\,:\,(N_r^j = 1, j\in H)\ \wedge\ (N_r^j = 0, j\in L)}}	\\	=\ 
	\sum_{H, L\subseteq [\nu]: H \cap L = \emptyset, |H| = h, |L| = k-h}
		\card{\set{r\in[R(N, D)]\,:\,(D_r^j = 1, j\in H)\ \wedge\ (D_r^j = 0, j\in L)}}.
\end{array}
\end{align}

Let $u\in\set{0, 1}^\nu$. If $u$ has $i$ non-zero coordinates, then $u_H = (1, 1 ,\ldots, 1)$ for exactly $\binom{i}{h}$ subsets $H\subseteq[\nu]$ of size $h$, and $u_L = (0, 0 ,\ldots, 0)$ for exactly $\binom{\nu-i}{k-h}$ subsets $L\subseteq[\nu]$ of size $k-h$. 
Thus, a row $N_r$ of $N$ or $D_r$ of $D$ with $i$ 1s and $\nu-i$ 0s is involved $\binom{i}{h}\times\binom{\nu-i}{k-h}$ times in the left and right sides, respectively, of equality \cref{eq-sum_h}.
From these observations, we deduce that the numbers $b_i$, $i\in\set{0, 1 ,\ldots \nu}$ and $a_i$, $i\in\set{0, 1 ,\ldots d}$ satisfy the equalities:
\begin{align}\label{eq-qkk-==!=}	%%{\nu-k+h}
\textstyle	\sum_{i = h}^\nu \binom{i}{h}\binom{\nu-i}{k-h} b_i
&\textstyle	=\sum_{i = h}^d \binom{i}{h}\binom{\nu-i}{k-h} a_i,	
				&h\in\set{0, 1 ,\ldots, k}.
\end{align}

%% using \cref{lem-fAB}
Now suppose that $d=k$. 
We prove by induction on $h$ that the following equality holds:
\begin{align}\label{eq-qkk-MN-rec}
a_h-b_h &\textstyle
	=(-1)^{k-h} \sum_{i=k+1}^\nu\binom{i}{h}\binom{i-1-h}{k-h} b_i,	&h\in\set{0, 1 ,\ldots, k}.
\end{align}
Assuming this identity, we observe successively:
$$\begin{array}{rll}
2R(N, D)
&=		\sum_{h=0}^k (b_h+a_h)+\sum_{h=k+1}^\nu b_h\\
&\geq 	\sum_{h=0}^k \abs{a_h-b_h}+\sum_{h=k+1}^\nu b_h
\\ %%	&\text{as $a_h, b_h \geq 0$, $h\in\set{0, 1 ,\ldots, k}$}\\
&\geq 	\sum_{h=0}^k \sum_{i=k+1}^\nu \binom{i}{h}\binom{i-1-h}{k-h} b_i+\sum_{h=k+1}^\nu b_h
&\text{by \cref{eq-qkk-MN-rec}}\\
&\geq \sum_{h=0}^k \binom{\nu-1-h}{k-h}\binom{\nu}{h} b_\nu+b_\nu.\\ 
%%	&\text{as $\nu > k$ and $b_{k+1}, b_{k+2},\ldots, b_{\nu-1} \geq 0$}
\end{array}$$
The conclusion then follows from considering $b_\nu = R^*(N, D)$. 

We now establish \cref{eq-qkk-MN-rec}.
For $h=k$, we have $a_k=\sum_{i=k}^\nu\binom{i}{k} b_i$ by \cref{eq-qkk-==!=}.
Thus $a_k-b_k=\sum_{i=k+1}^\nu \binom{i}{k} b_i$, and relation \cref{eq-qkk-MN-rec} is satisfied for $h=k$. 
Now suppose that \cref{eq-qkk-MN-rec} holds for all $\ell\in\set{h+1 ,\ldots, k}$, for some $h\in\set{0, 1 ,\ldots, k-1}$, and consider the case $h$. 
We observe successively:
\begin{align}
\nonumber\textstyle\binom{\nu-h}{k-h}(b_h-a_h) 
	&\textstyle=\sum_{\ell=h+1}^k \binom{\ell}{h}\binom{\nu-\ell}{k-h}(a_\ell-b_\ell)
					-\sum_{i=k+1}^\nu\binom{i}{h} \binom{\nu-i}{k-h} b_i
		&\text{by \cref{eq-qkk-==!=}}		\\
\nonumber
	&\textstyle=\sum_{\ell=h+1}^k \binom{\ell}{h}\binom{\nu-\ell}{k-h} \times
						(-1)^{k-\ell} \sum_{i=k+1}^\nu\binom{i}{\ell}\binom{i-1-\ell}{k-\ell} b_i
					-\sum_{i=k+1}^\nu\binom{i}{h} \binom{\nu-i}{k-h} b_i
		&\text{by \cref{eq-qkk-MN-rec}}
\end{align}
Using the identity $\binom{\ell}{h}\binom{i}{\ell}=\binom{i}{h}\binom{i-h}{\ell-h}$, we obtain the following equality:
\begin{align}\label{eq-Delta_h}
\textstyle\binom{\nu-h}{k-h}(b_h-a_h) 
	&\textstyle=\sum_{i=k+1}^\nu\binom{i}{h} b_i\times\left(
			\sum_{\ell=h+1}^k
				(-1)^{k-\ell}\binom{\nu-\ell}{k-h}\binom{i-h}{\ell-h}\binom{i-1-\ell}{k-\ell}
			-\binom{\nu-i}{k-h}
	\right).
\end{align}
Hence, a sufficient condition for \cref{eq-qkk-MN-rec} to be satisfied at $h$ is that for all $i\in\set{k +1 ,\ldots, \nu}$, the coefficient of $\binom{i}{h}b_i$ in the right-hand side of \cref{eq-Delta_h} is equal to $-\binom{\nu-h}{k-h}$ times the coefficient of $\binom{i}{h}b_i$ in the right-hand side of \cref{eq-qkk-MN-rec}, namely:
$$\begin{array}{rl}
\sum_{\ell=h+1}^k (-1)^{k-\ell}\binom{\nu-\ell}{k-h}\binom{i-h}{\ell-h}\binom{i-1-\ell}{k-\ell}
	-\binom{\nu-i}{k-h}	&=-\binom{\nu-h}{k-h}\times(-1)^{k-h}\binom{i-1-h}{k-h}.	
\end{array}$$
Equivalently, it suffices to establish the following equality:
\begin{align}\label{eq-Delta_h_i}
\textstyle\sum_{\ell=h}^k(-1)^{k-\ell}\binom{\nu-\ell}{k-h}\binom{i-h}{\ell-h}\binom{i-1-\ell}{k-\ell}
	-\binom{\nu-i}{k-h} &=0,	&i\in\set{k+1 ,\ldots, \nu}
\end{align}

Fix $i\in\set{k +1 ,\ldots, \nu}$. 
Define the sets 
$$A := \set{\nu-k+h+1 ,\ldots, \nu}\text{ and }B := \set{h ,\ldots, k}\cup\set{i}.$$
Then we observe that the left-hand side of \cref{eq-Delta_h_i} is a constant multiple 
of the quantity $f(A,B)$ defined by \cref{eq-fAB} (see \cref{appendix:sec-UB} for a detailed proof). 
Since $|B|=k-h+2=|A|+2$, \cref{lem-fAB} implies $f(A,B)=0$, thus concluding the proof. 
\end{proof}

\Cref{thm-delta_d=k-UB} together with \cref{prop-Gamma_2_Delta,thm-gamma_qpk} implies for all integers $k > 0$ and $q > k$ the equality $\gamma(q, k, k) = 2/(\sum_{r = 0}^k \binom{q}{r}\binom{q-1-r}{k-r}+1) = \delta(q, k, k)$.
The rest of the paper focuses on a specific family of cover pairs of arrays, called \emph{regular}. 

% ___________________________________________________ Tables régulières
% ________________ Des CPAs vers les ARPAs (jokers) :: CPAs régulières
\subsection{Regular pairs of arrays}

% ________________ CPAs régulières :: définition
\begin{definition}\label{def-delta-reg}
The {\em weight} of a Boolean vector is the number of its non-zero coordinates.
For a positive integer $\nu$, we define an array $M$ whose rows belong to $\set{0, 1}^\nu$ to be {\em regular} if, for all $i\in\set{0, 1 ,\ldots, \nu}$, Boolean words of length $\nu$ and weight $i$ all occur with the same frequency in $M$. 
By extension, a CPA $(N, D)$ where $N$ and $D$ are both regular is said to be {\em regular}.
\end{definition}
For instance, the CPA on the right of \cref{fig-Delta-ex} is regular. In this CPA, 
the words with weight 0, 3, and 5 occur twice, once, and three times in $N$, respectively, and the words with weight 1 and 4 occur once and twice in $D$, respectively.

% ________________ CPAs régulières :: restriction w.l.o.g.
\begin{proposition}\label{prop-reg_delta}
For all positive integers $k$, $d \geq k$, and $\nu \geq d$, if there exists $(N, D)\in\Delta(\nu, d, k)$ that realizes $\delta(\nu, d, k)$, then there exists $(N', D')\in\Delta(\nu, d, k)$ that realizes $\delta(\nu, d, k)$, and is regular.
\end{proposition}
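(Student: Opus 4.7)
The plan is a standard symmetrization argument over column permutations: from any optimal $(N,D) \in \Delta(\nu, d, k)$, I would average over all permutations of the $\nu$ columns to produce a regular pair achieving the same ratio.

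First, for each permutation $\sigma$ of $[\nu]$, let $(N_\sigma, D_\sigma)$ denote the pair obtained by applying $\sigma$ to the columns of both $N$ and $D$ simultaneously. I would verify that $(N_\sigma, D_\sigma) \in \Delta(\nu, d, k)$ and that $R^*(N_\sigma, D_\sigma) = R^*(N, D)$ and $R(N_\sigma, D_\sigma) = R(N, D)$. Indeed, $(k_=)$ survives because the same permutation is applied to both arrays, so the $k$-column projections change only by a relabeling of columns that is identical on both sides; $(\Delta_D)$ is preserved because the weight of a row is invariant under column permutation; and $(\Delta_N)$ survives because the all-ones row is fixed by every $\sigma$, which also shows that $R^*$ is unchanged.

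Next, I would form the pair $(N', D')$ by stacking the $\nu!$ pairs $(N_\sigma, D_\sigma)$, $\sigma \in S_\nu$ (i.e.\ taking the disjoint union of their row multisets). Stacking preserves $(\Delta_D)$ and $(\Delta_N)$ rowwise, and it preserves $(k_=)$ because that condition is a multiset equality on $k$-column projections and disjoint union respects multiset equality. Both $R^*$ and $R$ are multiplied by $\nu!$, so $R^*(N', D')/R(N', D') = R^*(N, D)/R(N, D) = \delta(\nu, d, k)$.

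It remains to check regularity. For any word $u \in \Sigma_2^\nu$ of weight $i$, the multiplicity of $u$ in $N'$ equals $\sum_{\sigma \in S_\nu} m_N(\sigma^{-1}(u))$, where $m_N(v)$ denotes the multiplicity of $v$ in $N$. Grouping the sum by the image $v = \sigma^{-1}(u)$, and observing that for each word $v$ of weight $i$ there are exactly $i!(\nu-i)!$ permutations $\sigma$ sending $v$ to $u$ (independently permuting the positions carrying $1$ and those carrying $0$), this multiplicity equals $i!(\nu-i)! \sum_{v : \mathrm{wt}(v) = i} m_N(v)$, which depends only on $i$. The same identity holds for $D'$, so both arrays are regular. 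No serious obstacle arises; the only subtle point is ensuring that $(k_=)$ passes to the stacked pair, which follows directly from the multiset-equality formulation of the condition.
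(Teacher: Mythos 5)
Your proposal is correct and follows essentially the same route as the paper's proof: symmetrize by taking the union over all $\nu!$ column permutations, check that $(\Delta_N)$, $(\Delta_D)$ and $(k_=)$ are preserved and that the ratio is unchanged, and establish regularity via the same $i!\,(\nu-i)!$ orbit-counting argument. No discrepancies to report.
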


\begin{proof}
If $(N, D)$ is not regular, then for $M\in\set{N, D}$, we construct $M'$ by inserting all possible permutations of each row of $M$. 
The conditions $(\Delta_N)$, $(\Delta_D)$, and $(k_=)$ are invariant under any permutation of the columns of the two arrays $N$ and $D$. 
%% NB ex. le (4,3)-CPA de force é de la Fig. 3 est invariant par échange des 2 1ères colonnes
Therefore, $(N', D')$ is the concatenation of $\nu!$ (not necessarily distinct) elements $(N'',D'')$ of $\Delta(\nu, d, k)$ corresponding to the $\nu!$ permutations of the columns of $N$ and $D$, each of which satisfies $R^*(N'', D'') = R^*(N, D)$ and $R(N'', D'') = R(N, D)$. 
Hence, $(N', D')$ is an element of $\Delta(\nu, d, k)$ with a ratio $R^*(N', D')/R(N', D')$ equal to $R^*(N, D)/R(N, D)$, which by assumption is equal to $\delta(\nu, d, k)$.

It remains to show that $(N', D')$ is regular. 
Let $M\in\set{N, D}$ and $u\in\set{0, 1}^\nu$. 
By construction, a row of $M'$ coincides with $u$ if and only if there exists a permutation $\sigma$ on $[\nu]$ and a row $M_s$ of $M$ such that
	$\sigma$ maps the 1s and the 0s of $M_s$ to those of $u$, 
	implying that $M_s$ has the same weight as $u$. 
If $i$ denotes the weight of $u$, we deduce that the number of rows of the form $u$ in $M'$ is $i!\times(\nu-i)!$ times the number of rows of weight $i$ in $M$. 
Since this number depends only on $i$, we conclude that $(N', D')$ is indeed regular. 
\end{proof}

Since the numbers $\delta(\nu, d, k)$ we are interested in can be achieved by regular arrays, we will henceforth restrict our focus to regular CPAs. 
We note that an array $M$ of such CPAs is fully characterized by the number of occurrences of the words of each admissible weight.

% ________________ CPAs régulières :: représentation
\begin{definition}\label{def-reg_xy}
Let $d > 0$ and $\nu\geq d$ be two integers.
With a $(\nu, d)$-CPA $(N, D)$, we associate the vector $(y, x)\in\mathbb{N}^{\nu+1}\times\mathbb{N}^{d+1}$ where
	for $i \in\set{0, 1 ,\ldots, \nu}$, $y_i$ indicates the number of times the $\nu$-dimensional Boolean words of weight $i$ occur in $N$ and, if $i \leq d$, $x_i$ indicates the number of times the $\nu$-dimensional Boolean words of weight $i$ occur in $D$.
We refer to $(y, x)$ as {\em the representative} vector of $(N, D)$.
\end{definition}
For example, the representative vector of the $(5, 4)$-CPA of \cref{fig-Delta-ex} is given by the vector:
$$\begin{array}{rl}
(y_0, y_1, y_2, y_3, y_4, y_5, x_0, x_1, x_2, x_3, x_4)
	&=(2, 0, 0, 1, 0, 3, 0, 1, 0, 0, 2).
\end{array}$$

\begin{proposition}\label{prop-reg_xy} 
Let $k > 0$, $d\geq k$, $\nu\geq d$ be three integers, and $(y, x)\in\mathbb{N}^{\nu+1}\times\mathbb{N}^{d+1}$. 
Then $(y, x)$ is the representative vector of a (regular) $(\nu, d)$-CPA $(N, D)$ of strength $k$ if and only if $y_\nu > 0$, and:
\begin{align}\label{delta_xy-equ}
	\textstyle		\sum_{i = h}^\nu \binom{\nu-k}{i-h}y_i
	&=\textstyle	\sum_{i = h}^d \binom{\nu-k}{i-h}x_i,		&h\in\set{0, 1 ,\ldots, k}.
\end{align} 

When this occurs, we have 
$$\begin{array}{rl}
R^*(N, D) &=y_\nu,\\
R(N, D)
	&=\sum_{i = 0}^\nu\binom{\nu}{i}y_i	=\sum_{i = 0}^d\binom{\nu}{i}x_i
	 = \left(\sum_{i = 0}^\nu\binom{\nu}{i}y_i+\sum_{i = 0}^d\binom{\nu}{i}x_i\right)/2.
\end{array}$$
\end{proposition}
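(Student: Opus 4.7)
The plan is to translate each condition defining a regular $(\nu,d)$-CPA of strength $k$ into an arithmetic condition on the representative vector $(y,x)$, and to recognize equation (\ref{delta_xy-equ}) as precisely the condition imposed by $(k_=)$.

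First, I would observe that a regular array with rows in $\Sigma_2^\nu$ is fully determined, up to the order of its rows, by its representative vector: $N$ is obtained by taking $y_i$ copies of every weight-$i$ word of $\Sigma_2^\nu$ for $i \in \set{0 ,\ldots, \nu}$, and $D$ similarly from $x_i$ for $i \in \set{0 ,\ldots, d}$. Condition $(\Delta_D)$ is then automatic since $x$ is indexed only up to $d$, while $(\Delta_N)$ translates into $y_\nu \geq 1$ because the all-ones row is the unique weight-$\nu$ word of $\Sigma_2^\nu$.

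Second, I would translate $(k_=)$ via a projection count. Fix a $k$-subset $C \subseteq [\nu]$ and a word $w \in \Sigma_2^k$ of weight $h$. A weight-$i$ row of $\Sigma_2^\nu$ restricts to $w$ on $C$ iff its ones contain the $h$ positions in $C$ prescribed by $w$ and the remaining $i - h$ ones lie among the $\nu - k$ positions outside $C$; this accounts for $\binom{\nu -k}{i -h}$ such rows, so exactly $\sum_{i =h}^{\nu -k +h} \binom{\nu -k}{i -h} y_i$ rows of $N$ project onto $w$ on $C$, and an analogous count yields $\sum_{i =h}^d \binom{\nu -k}{i -h} x_i$ for $D$. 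Because of regularity, these counts depend only on the weight $h$ of $w$ and not on the particular choice of $C$ or $w$; hence $(k_=)$ reduces to their equality for each $h \in \set{0 ,\ldots, k}$, which is precisely (\ref{delta_xy-equ}). Conversely, if (\ref{delta_xy-equ}) holds, then on any $k$-subset the projections of $N$ and $D$ coincide as multisets of length-$k$ words, so $(k_=)$ holds.

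Third, I would derive the cardinality formulas. The identity $R^*(N,D) = y_\nu$ is immediate since the all-ones row is the unique weight-$\nu$ word. Summing $y_i \binom{\nu}{i}$ over $i$ (each of the $\binom{\nu}{i}$ weight-$i$ words occurring $y_i$ times) gives $R(N,D) = \sum_{i =0}^\nu \binom{\nu}{i} y_i$, and analogously for $x$. The equality of the two expressions follows either from the fact that $N$ and $D$ share the same number of rows under $(k_=)$, or algebraically by combining the equations (\ref{delta_xy-equ}) with coefficients $\binom{k}{h}$ and applying Vandermonde's identity $\sum_{h =0}^k \binom{k}{h}\binom{\nu -k}{i -h} = \binom{\nu}{i}$.

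The subtle point, rather than a real obstacle, lies in step two: convincing oneself that regularity collapses the a priori huge family of multiset equalities implicit in $(k_=)$ (one per $k$-subset and per length-$k$ word) to the system (\ref{delta_xy-equ}) of just $k +1$ linear equations, and that the binomial factor $\binom{\nu -k}{i -h}$ correctly enumerates the contribution of each weight-$i$ row to a given projection.
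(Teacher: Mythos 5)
Your proposal is correct and follows essentially the same route as the paper: the core step in both is the observation that exactly $\binom{\nu -k}{i -h}$ weight-$i$ words of $\Sigma_2^\nu$ restrict to a given weight-$h$ word on a fixed $k$-subset, which turns $(k_=)$ into the system (\ref{delta_xy-equ}). The paper dismisses your steps one and three as trivial, so your only addition is to spell them out (including the Vandermonde derivation of the row-count identity, which the paper leaves implicit).
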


\begin{proof}
We only argue that $(N, D)$ satisfies $(k_=)$ if and only if $(y, x)$ satisfies \cref{delta_xy-equ} (the rest is trivial). First, we observe that $(N, D)$ satisfies $(k_=)$ if and only if, for all $k$-cardinality subsets $K$ of $[\nu]$ and all $w\in\set{0, 1}^k$, there are as many rows in $N$ and $D$ that coincide with $w$ on their coordinates with index in $K$. Formally, $N$ and $D$ must satisfy:
\begin{align}\label{eq_k=_Delta}
\card{\set{r\in[R(N, D)]:N_r^K = w}}
	&=\card{\set{r\in[R(N, D)]:D_r^K = w}},
		&K\subseteq[\nu],\,|K| = k,\ w\in\set{0, 1}^k.
\end{align}

Given a pair $(K, w)$, where $K\subseteq[\nu]$ and $w\in\set{0, 1}^k$, let $h$ denote the number of $1$'s in $w$. For $i\in\set{0, 1 ,\ldots, \nu}$, the number of words $u\in\set{0, 1}^\nu$ of weight $i$ that satisfy $u_K = w$ is the number of ways of choosing $i-h$ coordinates to set to 1 among those with index in $[\nu]\setminus K$. Since such words occur $y_i$ times in $N$ and, if $i\leq d$, $x_i$ times in $D$, we deduce that there are $\sum_{i = 0}^\nu\binom{\nu-k}{i-h}y_i$ and $\sum_{i = 0}^d\binom{\nu-k}{i-h}x_i$ rows in $N$ and $D$, respectively, that coincide with $w$ at their coordinates with index in $K$. 
\end{proof}

% ____________________________ De Delta rég vers Gamma (jokers)
\section{From cover to alphabet reduction pairs of arrays}
\label{sec-D2G}
% ___________________________________________________ Des CPAs vers les ARPAs (jokers)
\Cref{prop-Gamma_2_Delta} shows how to interpret a $(q, p)$-ARPA of strength $k$ as a $(q, p)$-CPA of the same strength. 
In this section, we address the converse question: can we interpret a $(\nu, d)$-CPA $(N, D)$ of strength $k$ as a $(\nu, d)$-ARPA of strength $k$, provided that it is regular?
Here, we think of $(N, D)$ as a partially defined ARPA: 
	the 1s indicate the entries matching their column index; 
	the remaining entries must be assigned values different from their column index to satisfy $(\Gamma_P)$ and $(\Gamma_=)$. 

We provide a partial answer to this question by showing how a regular $(\nu, d)$-CPA of strength $k$ can be transformed into a $(\nu, d')$-ARPA of strength $k$, where $d'$ is either $d$, $d+1$, or $d+2$, depending on $(N, D)$. 
This is a first step towards generalizing the equality of the numbers $\gamma(q, k, k)$ and $\delta(q, k, k)$ to the case where $p>k$.

% ________________ Des CPAs vers les ARPAs (jokers) :: construction
\subsection{Construction}

%% ____ entrée de l'algorithme
Let $k > 0$, $d\geq k$, and $\nu > d$ be three integers. We consider a regular CPA $(N, D)\in\Delta(\nu, d, k)\setminus\Delta(\nu, d-1, k)$ with no row in common between $N$ and $D$.
By the assumption that $(N, D)\notin\Delta(\nu, d-1, k)$, the words of weight $d$ appear in $D$. Thus, there exists a largest integer $r\in\set{d, d+1 ,\ldots, \nu-1}$ such that the words of weight $r$ appear at least once in $N$ or $D$.

Since $N$ and $D$ have no common rows, the pair $(N, D)$ can be uniquely represented by a vector $z = (z_0, z_1 ,\ldots, z_\nu)\in\mathbb{Z}^{\nu+1}$. 
Specifically, for each $i\in\set{0, 1 ,\ldots, \nu}$, $|z_i|$ indicates the number of times each word of weight $i$ occurs in $(N, D)$; these words appear in $N$ if $z_i < 0$ and in $D$ if $z_i > 0$.
If $(y, x)$ is the representative vector of $(N, D)$, then $z$ is simply defined for $i\in\set{0, 1 ,\ldots, \nu}$ by: 
\begin{align}\label{eq-xy2z}
	z_i	&=\begin{cases}
					-y_i	&\text{if $i >d$}\\
				x_i-y_i	&\text{otherwise}	%if $i\leq d$}\\
			\end{cases}.
\end{align} 

Note that, according to \cref{prop-reg_xy}, $(N, D)$ satisfies $(k_=)$ if and only if $z$ satisfies: 
\begin{align}\label{delta_z-equ}\textstyle
\sum_{i = h}^{\nu-k+h} \binom{\nu-k}{i-h} z_i 	&=0,		&h\in\set{0, 1 ,\ldots, k}.
\end{align}

%% ____ sortie de l'algorithme
%% ?? nllabel vs. label pour les lignes
\begin{algorithm}
\caption{Construction underlying Theorem \ref{thm-reg_joker}\label{alg-D2G}}
\KwIn{A regular CPA in $\Delta(\nu, d, k)\setminus\Delta(\nu, d-1, k)$ 
			encoded by $z = (z_0, z_1 ,\ldots, z_\nu)\in\mathbb{Z}^{\nu +1}$}
\KwOut{An ARPA in $\Gamma(\nu, d', k)$ for some $d'\in\set{d, d+1, d+2}$, 
			encoded by $\tilde{z}:\Sigma_\nu^\nu\rightarrow\mathbb{N}$}
\tcc{Initialization}
\ForEach{$u\in\Sigma_\nu^\nu$}{
	$\tilde{z}(u) \longleftarrow 0$\;
}
$r\longleftarrow \max\set{i\in\set{d, d+1 ,\ldots, \nu-1}\,:\,z_i \neq 0}$\;
\tcc{Rows derived from weight-$\nu$ rows of the original CPA}
$\tilde{z}((0, 1,\ldots, \nu-1)) \longleftarrow z_\nu$\;
\tcc{Rows derived from weight-$r$ rows of the original CPA}
\ForEach{$J\subseteq\Sigma_\nu$ with $|J|=r$}{
	$\tilde{z}(g(J)) \longleftarrow z_r$\;
	\label{alg-g-r}
}
\tcc{Rows derived from original CPA rows of weight $i<r$}
\ForEach{$i\in\set{0, 1 ,\ldots, r-1}$ for which $z_i\neq 0$}{
	\ForEach{$J\subseteq\Sigma_\nu$ with $|J|=i$}{
		\For{$c=0$ \KwTo $c_*(J)$}{
			\If{$c < c_*(J)$}{
				$\tilde{z}(g^c(J)) \longleftarrow \binom{\nu-c-2-i}{r-1-i} / \binom{\nu-1-i}{r-i} \times z_i$;
				\label{alg-g^c}	
			}
			\ElseIf{$c_*(J) < \nu -r$}{
				$\tilde{z}(g(J)) \longleftarrow \binom{\nu-c-1-i}{r-i} / \binom{\nu-1-i}{r-i} \times z_i$;
				\label{alg-g-i}
			}
		}
	}
}
\end{algorithm}

We derive from $(N, D)$ an ARPA $(Q, P)$ of strength $k$ that can be interpreted as $(N, D)$. Similarly to $(N, D)$, we represent $(Q, P)$ by means of a function $\tilde{z}: \Sigma_\nu^\nu\rightarrow\mathbb{N}$,  where, for $u\in\Sigma_\nu^\nu$,  
	$\tilde{z}(u)$ indicates the number of times $u$ occurs as a row in $(Q,P)$, 
	occurring in $Q$ if $\tilde{z}(u)<0$, in $P$ if $\tilde{z}(u)>0$. 
 %
%% ____ algorithme
The construction is described in \cref{alg-D2G}. 
Examples are given in \cref{fig-D2G}.
In this algorithm, for a subset $J$ of $\Sigma_\nu$, $c_*(J)$ refers to the smallest integer in $J\cup\set{\nu-r}$. 
Furthermore, for any $J\subseteq \Sigma_\nu$ and any $c\in\set{0, 1 ,\ldots, c_*(J)-1}$, $g(J)$ and $g^c(J)$ are the words of $\Sigma_\nu^\nu$ defined coordinate-wise for each $j\in\Sigma_\nu$ by:
$$\begin{array}{lcl}
\begin{array}{rl}
	g(J)_j 	&=\begin{cases}
				j		&\text{if $j\in J$}\\
				c_*(J)	&\text{otherwise}\\%% (thus $g(J)_j\neq j$)
			\end{cases},
\end{array}&&\begin{array}{rl}
	g^c(J)_j &=\begin{cases}
				j		&\text{if $j\in J$}\\
				c+1	&\text{if $j\notin J$ and $j\leq c$}\\%% (thus $g^c(J)_j\neq j$)}\\
				c		&\text{if $j\notin J$ and $j >c$}\\%% (thus $g^c(J)_j\neq j$)}
			\end{cases}.
\end{array}
\end{array}$$
We denote by $u(J)$ the indicator vector of $J$, defined for $j\in\Sigma_\nu$ by $u(J)_j = 1$ if $j\in J$ and 0 otherwise. By construction, for every $j\in\Sigma_\nu$, the coordinate of index $j$ in both $g(J)$ and $g^c(J)$ is equal to $j$ if and only if $j\in J$. The Boolean words $\pi_\nu(g(J))$ and $\pi_\nu(g^c(J))$ therefore both coincide with $u(J)$. 
\begin{figure}[t]{\footnotesize
\caption{\label{fig-D2G}
$(\nu, d)$-ARPAs of strength 2 (left) and 3 (right), derived using Algorithm \ref{alg-D2G} from regular $(\nu, d)$-CPAs of the same strength. 
	The gray and black colors indicate entries with values 1 and 0, respectively, in the original CPAs.}
\begin{center}
\begin{tabular}{cc}
$\begin{array}{c}
	(Q, P)\in\Gamma(5, 2, 2)\\[4pt] %% (solution régulière sous-optimale)
	\begin{array}{ccc}\setlength\arraycolsep{1.5pt}
		\begin{array}{ccccc}
			Q^0			&Q^1		&Q^2		&Q^3		&Q^4\\
		\hline
			\equj{0}	&\equj{1}	&\equj{2}	&\equj{3}	&\equj{4}\\
		\hline
			\equj{0}	&\difj{0}	&\difj{0}	&\difj{0}	&\difj{0}\\
			\equj{0}	&\difj{0}	&\difj{0}	&\difj{0}	&\difj{0}\\
			\equj{0}	&\difj{0}	&\difj{0}	&\difj{0}	&\difj{0}\\
			\difj{1}	&\equj{1}	&\difj{0}	&\difj{0}	&\difj{0}\\
			\difj{1}	&\equj{1}	&\difj{1}	&\difj{1}	&\difj{1}\\
			\difj{1}	&\equj{1}	&\difj{1}	&\difj{1}	&\difj{1}\\
			\difj{1}	&\difj{0}	&\equj{2}	&\difj{0}	&\difj{0}\\
			\difj{2}	&\difj{2}	&\equj{2}	&\difj{1}	&\difj{1}\\
			\difj{2}	&\difj{2}	&\equj{2}	&\difj{2}	&\difj{2}\\
			\difj{1}	&\difj{0}	&\difj{0}	&\equj{3}	&\difj{0}\\
			\difj{2}	&\difj{2}	&\difj{1}	&\equj{3}	&\difj{1}\\
			\difj{3}	&\difj{3}	&\difj{3}	&\equj{3}	&\difj{2}\\
			\difj{1}	&\difj{0}	&\difj{0}	&\difj{0}	&\equj{4}\\
			\difj{2}	&\difj{2}	&\difj{1}	&\difj{1}	&\equj{4}\\
			\difj{3}	&\difj{3}	&\difj{3}	&\difj{2}	&\equj{4}\\
		\end{array}&&\setlength\arraycolsep{1.5pt}
		\begin{array}{ccccc}
			P^0			&P^1		&P^2		&P^3		&P^4\\
		\hline
			\equj{0}	&\equj{1}	&\difj{0}	&\difj{0}	&\difj{0}\\
			\equj{0}	&\difj{0}	&\equj{2}	&\difj{0}	&\difj{0}\\
			\equj{0}	&\difj{0}	&\difj{0}	&\equj{3}	&\difj{0}\\
			\equj{0}	&\difj{0}	&\difj{0}	&\difj{0}	&\equj{4}\\
			\difj{1}	&\equj{1}	&\equj{2}	&\difj{1}	&\difj{1}\\
			\difj{1}	&\equj{1}	&\difj{1}	&\equj{3}	&\difj{1}\\
			\difj{1}	&\equj{1}	&\difj{1}	&\difj{1}	&\equj{4}\\
			\difj{2}	&\difj{2}	&\equj{2}	&\equj{3}	&\difj{2}\\
			\difj{2}	&\difj{2}	&\equj{2}	&\difj{2}	&\equj{4}\\
			\difj{3}	&\difj{3}	&\difj{3}	&\equj{3}	&\equj{4}\\
		\hline
			\difj{1}	&\difj{0}	&\difj{0}	&\difj{0}	&\difj{0}\\
			\difj{1}	&\difj{0}	&\difj{0}	&\difj{0}	&\difj{0}\\
			\difj{1}	&\difj{0}	&\difj{0}	&\difj{0}	&\difj{0}\\
			\difj{2}	&\difj{2}	&\difj{1}	&\difj{1}	&\difj{1}\\
			\difj{2}	&\difj{2}	&\difj{1}	&\difj{1}	&\difj{1}\\
			\difj{3}	&\difj{3}	&\difj{3}	&\difj{2}	&\difj{2}\\
		\end{array}
	\end{array}
\end{array}$&$\begin{array}{c}
	(Q, P)\in\Gamma(5, 4, 3)\\[4pt] %% (sol. rég. optimale et minimale)
	\begin{array}{ccc}\setlength\arraycolsep{1.5pt}
		\begin{array}{ccccc}
			Q^0			&Q^1		&Q^2		&Q^3		&Q^4\\
		\hline
			\equj{0}	&\equj{1}	&\equj{2}	&\equj{3}	&\equj{4}\\
			\equj{0}	&\equj{1}	&\equj{2}	&\equj{3}	&\equj{4}\\
			\equj{0}	&\equj{1}	&\equj{2}	&\equj{3}	&\equj{4}\\
		\hline
			\equj{0}	&\equj{1}	&\equj{2}	&\difj{0}	&\difj{0}\\
			\equj{0}	&\equj{1}	&\difj{0}	&\equj{3}	&\difj{0}\\
			\equj{0}	&\equj{1}	&\difj{0}	&\difj{0}	&\equj{4}\\
			\equj{0}	&\difj{0}	&\equj{2}	&\equj{3}	&\difj{0}\\
			\equj{0}	&\difj{0}	&\equj{2}	&\difj{0}	&\equj{4}\\
			\equj{0}	&\difj{0}	&\difj{0}	&\equj{3}	&\equj{4}\\
			\difj{1}	&\equj{1}	&\equj{2}	&\equj{3}	&\difj{0}\\
			\difj{1}	&\equj{1}	&\equj{2}	&\difj{0}	&\equj{4}\\
			\difj{1}	&\equj{1}	&\difj{0}	&\equj{3}	&\equj{4}\\
			\difj{1}	&\difj{0}	&\equj{2}	&\equj{3}	&\equj{4}\\
		\hline
			\difj{1}	&\difj{0}	&\difj{0}	&\difj{0}	&\difj{0}\\
			\difj{1}	&\difj{0}	&\difj{0}	&\difj{0}	&\difj{0}\\
		\end{array}&&\setlength\arraycolsep{1.5pt}
		\begin{array}{ccccc}
			P^0			&P^1		&P^2		&P^3		&P^4\\
		\hline
			\equj{0}	&\equj{1}	&\equj{2}	&\equj{3}	&\difj{0}\\
			\equj{0}	&\equj{1}	&\equj{2}	&\equj{3}	&\difj{0}\\
			\equj{0}	&\equj{1}	&\equj{2}	&\difj{0}	&\equj{4}\\
			\equj{0}	&\equj{1}	&\equj{2}	&\difj{0}	&\equj{4}\\
			\equj{0}	&\equj{1}	&\difj{0}	&\equj{3}	&\equj{4}\\
			\equj{0}	&\equj{1}	&\difj{0}	&\equj{3}	&\equj{4}\\
			\equj{0}	&\difj{0}	&\equj{2}	&\equj{3}	&\equj{4}\\
			\equj{0}	&\difj{0}	&\equj{2}	&\equj{3}	&\equj{4}\\
			\difj{1}	&\equj{1}	&\equj{2}	&\equj{3}	&\equj{4}\\
			\difj{1}	&\equj{1}	&\equj{2}	&\equj{3}	&\equj{4}\\
		\hline
			\equj{0}	&\difj{0}	&\difj{0}	&\difj{0}	&\difj{0}\\
			\difj{1}	&\equj{1}	&\difj{0}	&\difj{0}	&\difj{0}\\
			\difj{1}	&\difj{0}	&\equj{2}	&\difj{0}	&\difj{0}\\
			\difj{1}	&\difj{0}	&\difj{0}	&\equj{3}	&\difj{0}\\
			\difj{1}	&\difj{0}	&\difj{0}	&\difj{0}	&\equj{4}\\
		\end{array}
	\end{array}
\end{array}$
\end{tabular}
\end{center}}
\end{figure}
 %% illustration

In the next section, we show that the resulting array pair $(Q, P)$
can be interpreted as $(N, D)$ (\cref{fact-algo-inter}), 
satisfies $(\Gamma_P)$ with respect to some specific integer $d'$ depending on $(N, D)$ (\cref{fact-algo-G_D}), 
and satisfies $(k_=)$ (\cref{fact-algo-k=}). 
Precisely, we establish the following theorem:

\begin{theorem}\label{thm-reg_joker}
Let $k >0$, $d \geq k$, and $\nu >d$ be three integers, and let $(N, D)$ be a regular CPA of $\Delta(\nu, d, k)\setminus\Delta(\nu, d-1, k)$ such that $N$ and $D$ have no common row. 
We denote by $r$ the largest weight among the words other than $1\ 1\ \cdots\ 1$ that occur in $(N, D)$.  
Let $d'$ be defined by
$$\begin{array}{rl}
d' &=\begin{cases}
		d+2	&\text{if $r >d$,}\\
		d+1	&\text{if $r = d$, and the words of weight $d-1$ occur in $D$,}\\
		d		&\text{otherwise (thus $r = d$ and the rows of $D$ have weight $\neq d-1$).}
	\end{cases}
\end{array}$$
Then, from $(N, D)$, we can derive a $(\nu, d')$-ARPA $(Q, P)$ of strength $k$ that can be interpreted as $(N, D)$.
\end{theorem}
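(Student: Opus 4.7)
The plan has three parts: verify (a) that $(Q, P)$ interprets as $(N, D)$ in the sense of Definition \ref{def-G_D}; (b) that $(Q, P)$ satisfies $(\Gamma_P)$ with respect to the declared $d'$; and (c) that $(Q, P)$ satisfies $(k_=)$.

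For (a), I would fix a subset $J \subseteq \Sigma_\nu$ with $|J| = i$ and show that the total multiplicity assigned by Algorithm \ref{alg-D2G} to rows $u$ with $\pi_\nu(u) = u(J)$ equals $z_i$. Both $g(J)$ and $g^c(J)$ have their $j$-th coordinate equal to $j$ exactly when $j \in J$, so their $\pi_\nu$-image is $u(J)$. The hockey-stick identity
$$
\sum_{c=0}^{c_*(J)-1} \binom{\nu - c - 2 - i}{r - 1 - i} = \binom{\nu - 1 - i}{r - i} - \binom{\nu - c_*(J) - 1 - i}{r - i},
$$
combined with the contribution at $c = c_*(J)$ (when that branch is triggered, i.e., $c_*(J) < \nu - r$), shows the coefficients of $z_i$ sum to $\binom{\nu - 1 - i}{r - i}$, yielding total multiplicity $z_i$. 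The sign convention of (\ref{eq-xy2z}) then distributes the rows between $Q$ and $P$ so as to recover $(N, D)$; in particular, $(\Gamma_Q)$ follows from $y_\nu > 0$ since the row $(0, 1, \ldots, \nu - 1)$ receives multiplicity $|z_\nu| = y_\nu$ in $Q$.

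For (b), I would analyze the distinct-symbol count of each row produced. A row $g(J)$ uses the coordinates in $J$ (giving $|J|$ distinct symbols) plus the constant value $c_*(J) = \min(J \cup \{\nu - r\})$ outside $J$. When $|J| = r$ the inequality $\min J \leq \nu - r$ forces $c_*(J) \in J$, so $g(J)$ has $r$ distinct symbols; when $|J| = i < r$ and $g(J)$ is used (i.e.\ $c_*(J) < \nu - r$), again $c_*(J) \in J$, giving $i$ distinct symbols. A row $g^c(J)$ with $c < c_*(J)$ introduces at most the two new values $c$ and $c + 1$ outside $J$, for at most $i + 2$ distinct symbols. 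Combining these bounds with the sign structure of (\ref{eq-xy2z}) (only weights $i \leq d$ with $x_i > y_i$ contribute to $P$) and tracking which source can actually force large symbol counts matches the three stated values of $d'$: weight-$d$ rows via $g^c$ may require $d + 2$ symbols when $r > d$; when $r = d$, weight-$d$ rows flow through $g$ alone and yield $d$ symbols, and the next threat comes from weight-$(d - 1)$ rows via $g^c$, contributing at most $d + 1$ symbols and only when $x_{d - 1} > 0$.

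The main obstacle is (c). Fix $K \subseteq \Sigma_\nu$ of size $k$ and $v \in \Sigma_\nu^k$, and let $H = \{j \in K : v_j = j\}$, $L = K \setminus H$, $h = |H|$. The required identity is $\sum_u \tilde{z}(u)\,[u_K = v] = 0$. I would group the summands by the underlying $J$: a row $g(J)$ matches $v$ on $K$ only when $L \cap J = \emptyset$ and $v_L$ is constant equal to $c_*(J)$; a row $g^c(J)$ matches only when $L \cap J = \emptyset$ and $v_L$ has the two-level shape dictated by $c$, namely $v_j = c + 1$ on $\{j \in L : j \leq c\}$ and $v_j = c$ on $\{j \in L : j > c\}$. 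For each admissible pair $(J, c)$, the coefficient $\binom{\nu - c - 2 - i}{r - 1 - i}/\binom{\nu - 1 - i}{r - i}$ is engineered so that summing over the admissible $c$ telescopes, again via the hockey-stick identity, to a contribution proportional to $\binom{\nu - k}{i - h} z_i$. Summing these over $i$ and over the compatible configurations of $J$ containing $H$ then vanishes as a direct consequence of the CPA equation (\ref{delta_z-equ}) at level $h$. The intricate enumeration of admissible $(J, c)$ and the careful telescoping of binomial coefficients is where the bulk of the technical work lies, and I expect the natural place to relegate the routine calculation is an appendix, in the spirit of the sketch given for (\ref{eq-qkk-MN-rec}).
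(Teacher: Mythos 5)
Your parts (a) and (b) follow the paper's own argument (Facts \ref{fact-algo-inter} and \ref{fact-algo-G_D}) essentially verbatim: the telescoping of $\binom{\nu-c-2-i}{r-1-i}=\binom{\nu-c-1-i}{r-i}-\binom{\nu-c-2-i}{r-i}$ over $c$ is exactly how the paper recovers the total multiplicity $z_i$, and your symbol-count analysis of $g(J)$ versus $g^c(J)$ reproduces the case split defining $d'$. The issue is part (c), which is where the whole difficulty of the theorem sits, and where your described mechanism would not go through as stated. For a fixed $(K,v)$ the value $c$ is pinned down by $v_L$ (there is no "summing over admissible $c$" to telescope); the sum $R(H,L,v)$ runs over the sets $J\supseteq H$ disjoint from $L$, and the coefficient of $z_i$ that comes out is of the form $\binom{\nu-c-h-\lambda}{i-h}\binom{\nu-c-i}{r-i}/\binom{\nu-1-i}{r-i}$ (with $\lambda=|\{\ell\in L:\ell>c\}|$), \emph{not} $\binom{\nu-k}{i-h}$. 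Consequently the vanishing is not "a direct consequence of (\ref{delta_z-equ}) at level $h$."

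What the paper actually does is introduce the two-parameter family $f_z(h',\lambda',c')$ of (\ref{eq-f_z}), show (Table \ref{tab-fact6-valf}) that $R(H,L,v)$ is a signed sum of one or two such terms with $h'+\lambda'<k$ and $c'\leq\nu-r$, reduce $\lambda'$ to $0$ by the Pascal recurrence (\ref{eq-f-rec}), and only then, for $\lambda'=0$, rewrite the coefficient as $\binom{\nu-c'-h'}{r-h'}\binom{\nu-1-h'}{i-h'}/\binom{\nu-1-h'}{r-h'}$ and expand $\binom{\nu-1-h'}{i-h'}$ by Vandermonde into $\sum_{j}\binom{k-h'-1}{j}\binom{\nu-k}{i-h'-j}$. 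The vanishing therefore uses a \emph{linear combination of the constraints (\ref{delta_z-equ}) at all levels $h',h'+1,\ldots,k-1$} (and, through the $\lambda'$-recursion, at levels up to $h+\lambda$), not the single constraint at level $h$. So the structural setup of your part (c) — the $(H,L,v)$ decomposition and the appeal to (\ref{delta_z-equ}) — is the right one, but the intermediate claim you plan to prove in the appendix is false as stated, and the missing two-step reduction (recursion in $\lambda'$, then Vandermonde across levels) is precisely the non-routine content of the proof.
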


% ________________ Des CPAs vers les ARPAs (jokers) :: preuve
\subsection{Proof of \cref{thm-reg_joker}}

\begin{fact}\label{fact-algo-inter}
$(Q, P)$ can be interpreted as $(N, D)$. 
\end{fact}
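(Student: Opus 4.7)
The plan is to verify that the pair $(\pi_\nu(Q), \pi_\nu(P))$ coincides with $(N, D)$ up to the order of its rows. Because $(N, D)$ is regular and $N$ and $D$ share no row, the integer $z_{|J|}$ is exactly the signed multiplicity of the Boolean word $u(J)$ in $(N, D)$ (negative values assigning the word to $N$, positive to $D$). Hence it suffices to check that for every $J \subseteq \Sigma_\nu$,
\[
\sum_{u \in \Sigma_\nu^\nu :\, \pi_\nu(u) = u(J)} \tilde{z}(u) \;=\; z_{|J|},
\]
and that every nonzero $\tilde{z}(u)$ produced by Algorithm \ref{alg-D2G} has the same sign as $z_{|J|}$, so that the row is routed to the correct array.

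The first task is bookkeeping. From the definitions of $g(J)$ and $g^c(J)$ one reads off $\pi_\nu(g(J)) = \pi_\nu(g^c(J)) = u(J)$, so the block of the algorithm indexed by a given $J$ only touches preimages of $u(J)$, and the blocks for distinct $J$'s are disjoint. Within a fixed $J$, the rows $g^0(J), \ldots, g^{c_*(J)-1}(J)$, together with $g(J)$ when $c_*(J) < \nu - r$, are pairwise distinct (they already differ at coordinate $0$, which lies outside $J$ whenever the inner loop is entered), so no assignment of $\tilde{z}$ overwrites another. For the weight $i = \nu$ the dedicated line sets $\tilde{z}((0, 1, \ldots, \nu-1)) = z_\nu$; for $i = r$ the dedicated loop sets $\tilde{z}(g(J)) = z_r$; for $i \in \{r+1, \ldots, \nu-1\}$ maximality of $r$ gives $z_i = 0$ and nothing is assigned; and for $i < r$ with $z_i = 0$ the block is skipped. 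In all these cases the required identity is immediate.

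The substantive step is the weights $i < r$ with $z_i \neq 0$. For such $i$ and a fixed $J$ of size $i$, the total signed mass put on preimages of $u(J)$ equals $z_i / \binom{\nu - 1 - i}{r - i}$ multiplied by
\[
\sum_{c = 0}^{c_*(J) - 1} \binom{\nu - c - 2 - i}{r - 1 - i} \;+\; \mathbf{1}\bigl[c_*(J) < \nu - r\bigr]\,\binom{\nu - c_*(J) - 1 - i}{r - i}.
\]
I plan to evaluate the sum by the hockey-stick identity $\sum_{j = a}^{b}\binom{j}{s} = \binom{b + 1}{s + 1} - \binom{a}{s + 1}$ after the substitution $j = \nu - 2 - i - c$, which yields $\binom{\nu - 1 - i}{r - i} - \binom{\nu - 1 - i - c_*(J)}{r - i}$. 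The correction term either exactly cancels $\binom{\nu - 1 - i - c_*(J)}{r - i}$ when $c_*(J) < \nu - r$, or, when $c_*(J) = \nu - r$, this binomial already vanishes since $\binom{r - 1 - i}{r - i} = 0$. In both regimes the bracket collapses to $\binom{\nu - 1 - i}{r - i}$, producing the required total $z_i$. Finally, all prefactors produced by the algorithm are non-negative (the ranges $c \leq c_*(J) - 1 \leq \nu - r - 1$ guarantee $\nu - c - 2 - i \geq r - 1 - i$), so $\tilde{z}(u)$ and $z_{|\pi_\nu(u)|}$ have the same sign, which sends the rows to the correct array. I expect the hockey-stick computation with its two boundary regimes in $c_*(J)$ to be the only genuine obstacle; the remainder is a direct reading of Algorithm \ref{alg-D2G}.
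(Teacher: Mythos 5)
Your proof is correct and is essentially the paper's argument: both reduce the claim to the identity $\sum_{c=0}^{c_*(J)-1}\tilde{z}(g^c(J)) +\tilde{z}(g(J)) =z_{|J|}$ and collapse the binomial sum by the same telescoping (your hockey-stick identity is just the summed form of the Pascal's-rule substitution $\binom{\nu -c -2 -i}{r -1 -i} =\binom{\nu -c -1 -i}{r -i} -\binom{\nu -c -2 -i}{r -i}$ used in the paper), with the same two boundary regimes for $c_*(J)$. One small quibble in your extra bookkeeping: the words $g^{c_*(J) -1}(J)$ and $g(J)$ do \emph{not} differ at coordinate $0$ (both have first coordinate $c_*(J)$ when $0\notin J$); they are still distinct because they disagree at any $j\notin J$ with $j\geq c_*(J)$, which exists since $|J| <r\leq \nu -c_*(J)$, so your conclusion stands.
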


\begin{proof}
We must show that, for any $J\subseteq\Sigma_\nu$, the word $u(J)$ occurs in $\pi_\nu(Q)$ and in $\pi_\nu(P)$ the same number of times as in $N$ and $D$. 
Let $i = |J|$. Equivalently, we must show that the number of rows of the form $g(J)$ or $g^c(J)$, for $c\in\set{0, 1 ,\ldots, c_*(J)-1}$, in $Q$ and $P$ equals the number of rows $u(J)$ in $N$ and $D$, respectively. 
If $i\in\set{\nu, r}$ or $z_i = 0$, the claim is trivial. Thus assume that $i < r$ and $z_i\neq 0$.
Our goal is to prove the equality $\sum_{c = 0}^{c_*(J)-1}\tilde{z}(g^c(J))+\tilde{z}(g(J)) = z_i$.
Now, we have:
$$\begin{array}{rl}
	\sum_{c = 0}^{c_*(J)-1}\tilde{z}(g^c(J))+\tilde{z}(g(J))
		=\frac{z_i}{\binom{\nu-1-i}{r-i}}\!\times\!\left(
			\sum_{c = 0}^{c_*(J)-1} \binom{\nu-c-2-i}{r-1-i}+\binom{\nu-c_*(J)-1-i}{r-i}
		\right)
\end{array}.$$

By using the identity $\binom{\nu-c-2-i}{r-1-i} = \binom{\nu-c-1-i}{r-i}-\binom{\nu-c-2-i}{r-i}$, we can simplify this expression to obtain the desired equality. 
\end{proof}

\begin{fact}\label{fact-algo-G_D}
$(Q, P)$ satisfies $(\Gamma_P)$ with respect to the integer $d'$ as defined in \cref{thm-reg_joker}.
\end{fact}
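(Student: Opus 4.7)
The plan is to examine row-by-row which assignments produced by Algorithm~\ref{alg-D2G} contribute to $P$ (those with $\tilde z > 0$) and to bound the number of distinct symbols in each such row.

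First, I would establish the sign of $\tilde z$. Every coefficient assigned in lines~\ref{alg-g-r}, \ref{alg-g^c} and~\ref{alg-g-i} multiplies $z_i$ by a non-negative binomial ratio, so it has the sign of $z_i$. By~(\ref{eq-xy2z}), and since $N$ and $D$ share no row by hypothesis, $z_i > 0$ forces $i \leq d$ (with $z_i = x_i$). Hence every row of $P$ arises from an index $i \leq d$ with words of weight $i$ occurring in $D$; in particular, in the ``$i = r$'' branch (line~\ref{alg-g-r}), these rows land in $P$ only when $r \leq d$, i.e.\ $r = d$.

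Next, from the definitions of $g(J)$ and $g^c(J)$, $g(J)$ uses exactly the symbols $J \cup \{c_*(J)\}$, while $g^c(J)$ (for $c < c_*(J)$) uses only symbols in $J \cup \{c, c+1\}$, since outside $J$ its coordinates take only the values $c$ and $c+1$. Thus $g(J)$ has $|J|$ or $|J|+1$ distinct symbols and $g^c(J)$ has at most $|J|+2$. Two refinements matter for the final bound:
(i) when $|J| = r$, a pigeonhole argument ($J$ cannot fit inside $\{\nu - r + 1, \ldots, \nu - 1\}$) forces $\min J \leq \nu - r$, so $c_*(J) = \min J \in J$ and $g(J)$ uses exactly $r$ symbols;
(ii) when $|J| = i < r$, the algorithm emits $g(J)$ only through line~\ref{alg-g-i} under the condition $c = c_*(J) < \nu - r$, which again forces $c_*(J) = \min J \in J$ and yields exactly $i$ symbols.

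Finally, I would conclude by splitting along the three sub-cases that define $d'$. If $r > d$, every row of $P$ has $i \leq d < r$ and by the previous step uses at most $d + 2 = d'$ symbols. If $r = d$ and words of weight $d - 1$ occur in $D$ (so $z_{d-1} > 0$), rows of $P$ come either from $i = d$ (exactly $d$ symbols by~(i)) or from some $i \leq d - 1$; the worst case $i = d - 1$ via the $g^c$ branch gives $(d - 1) + 2 = d + 1 = d'$. Otherwise ($r = d$ with $z_{d-1} \leq 0$), only $i = d$ and $i \leq d - 2$ feed $P$, capping the count at $\max(d, (d - 2) + 2) = d = d'$. The delicate point will be the second step above, namely verifying that $g^c(J)$ never introduces a symbol outside $J \cup \{c, c+1\}$; this rests on the precise rule $g^c(J)_j \in \{c, c+1\}$ for $j \notin J$ and on the iteration range of $c$, which is dictated by $c_*(J)$ and the auxiliary parameter $\nu - r$.
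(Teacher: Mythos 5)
Your proof is correct and follows essentially the same route as the paper's: use the sign of $\tilde{z}$ to determine which words $g(J)$ and $g^c(J)$ can land in $P$ (forcing $|J|\leq d$, and $r=d$ for the line-\ref{alg-g-r} rows), bound their symbol sets by $J\cup\set{c_*(J)}$ and $J\cup\set{c, c+1}$ respectively, and split on the three sub-cases defining $d'$. Your pigeonhole observation that $|J|\geq r$ forces $c_*(J)=\min J\in J$ is exactly the point the paper relies on (stated there, slightly imprecisely, as ``$\nu-r\in J$ provided that $|J|\geq r$''), so nothing is missing.
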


\begin{proof}
By construction, $P$ contains words of the form either $g(J)$ or $g^c(J)$, where $J$ is a subset of $\Sigma_\nu$.
 %
%% ____ g(J)
The coordinates of a word $g(J)$ are drawn from the set $J\cup\set{c_*(J)}$. 
On the one hand, if \cref{alg-D2G} considers such a word, then either $|J| = r$, or $|J| < r$ and $c_*(J) < \nu-r$. In both cases, we have $c_*(J)\in J$. 
On the other hand, a word $g(J)$ may not occur in $P$ unless $z_{|J|} >0$, while $z_{|J|}$ may not be positive unless $|J|\leq d$. The number of pairwise distinct values taken by the coordinates of the words $g(J)$ occurring in $P$ is thus bounded above by $d$. 

%% ____ g^c(J)
A word $g^c(J)$ may not occur in $P$ unless $z_{|J|} >0$, $|J|\leq d$ and $|J| <r$.
The maximum size of a subset $J$ for which a word $g^c(J)$ occurs in $P$ is therefore 
	$d$ if $r >d$, 
	$d-1$ if $r = d$ and $z_{d-1} > 0$, 
	and at most $d-2$ otherwise (thus $r = d$ and $z_{d-1} = 0$). 
The conclusion follows from the fact that the coordinates of $g^c(J)$ come from the set $J\cup\set{c, c+1}$.
\end{proof}

The third ingredient of the proof of \cref{thm-reg_joker} is based on the following technical lemma:
%
%% z_\nu <0 :: pour que r soit bien défini
\begin{lemma}\label{lem-f_z}	%% _______________ f_z = 0
Let $k > 0$ and $\nu > k$ be two integers, and $z = (z_0, z_1 ,\ldots, z_\nu)$ be a sequence of integers  satisfying \cref{delta_z-equ} with $z_\nu <0$. 
We denote by $r$ the largest integer in $\set{k, k+1 ,\ldots, \nu-1}$ for which $z_r\neq 0$. 
Then, for all natural numbers $h$ and $\lambda$ such that $h+\lambda <k$ and $c\leq\nu-r$, we have:
\begin{align}\label{eq-f=0}
f_z(h, \lambda, c)	&:=\textstyle
	\sum_{i = h}^r
		\binom{\nu-c-h-\lambda}{i-h}\binom{\nu-c-i}{r-i}/\binom{\nu-1-i}{r-i} 
		\times z_i
	&=0.
\end{align}
\end{lemma}
Essentially, we prove that for such parameter sets $(h, \lambda, c)$, the quantities $f_z(h, \lambda, c)$ satisfy the following recurrence relation and the initial condition
\begin{align}\label{eq-f-rec}
f_z(h, \lambda, c)
&=\begin{cases}
	f_z(h, \lambda-1, c)-f_z(h+1, \lambda-1, c)	&\text{if $\lambda >0$},	\\
	0												&\text{otherwise}.
\end{cases}
\end{align}
The detailed proof of this lemma is given in \cref{appendix:sec-fact_k=}. 
Now we state the third and final fact that proves \cref{thm-reg_joker}:

\begin{fact}\label{fact-algo-k=}
$(Q, P)$ satisfies $(k_=)$. 
\end{fact}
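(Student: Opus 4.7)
The plan is to show that for every $k$-subset $K$ of $\Sigma_\nu$ and every $v\in\Sigma_\nu^k$ (indexed by $K$), the word $v$ appears equally often as the restriction to $K$ of a row of $Q$ and of a row of $P$. Setting
\[
T(K,v)\ :=\ \sum_{u\in\Sigma_\nu^\nu}\tilde{z}(u)\,\mathbf{1}[u_K=v]
\]
with the sign convention of Algorithm \ref{alg-D2G} (positive multiplicities go into $P$, negative into $Q$), this amounts to $T(K,v)=0$. The only words carrying a non-zero $\tilde{z}$ are $(0,1,\ldots,\nu-1)$ and the words $g(J)$ or $g^c(J)$; since every such word satisfies $u_j=j$ exactly when $j\in J$, writing $H=\{j\in K:v_j=j\}$, $h=|H|$, $L=K\setminus H$, a necessary condition to contribute is $J\cap K=H$. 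Moreover, $g(J)_K=v$ forces $v_L$ to be constant equal to $c_*(J)$, while $g^c(J)_K=v$ forces $v_L$ to match the ``step profile'' taking value $c+1$ on $L\cap\{0,\ldots,c\}$ and $c$ on $L\cap\{c+1,\ldots,\nu-1\}$.

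First I would dispatch the easy cases. If $v_L$ is neither constant nor a step profile, then $T(K,v)=0$ trivially. If $L=\emptyset$ (so $H=K$), then every $J\supseteq K$ contributes via all of its associated rows, whose signed multiplicities sum to $z_{|J|}$ by Fact \ref{fact-algo-inter}. Using $z_i=0$ for $r<i<\nu$,
\[
T(K,v)\ =\ z_\nu+\sum_{i=k}^{r}\binom{\nu-k}{i-k}z_i\ =\ \sum_{i=k}^{\nu}\binom{\nu-k}{i-k}z_i,
\]
which vanishes by identity \eqref{delta_z-equ} at $h=k$.

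Next I would treat the remaining shape classes of $v_L$. If $v_L$ is constant equal to some $a\in\Sigma_\nu$, I would split on whether $L\subseteq\{0,\ldots,a-1\}$, $L\subseteq\{a+1,\ldots,\nu-1\}$, or $L$ straddles $a$: in the first two sub-cases the $g(J)$-rows with $c_*(J)=a$ combine with $g^{a-1}(J)$- (resp.\ $g^a(J)$-) rows, whereas in the third only $g(J)$-rows contribute. If $v_L$ is a genuine non-constant step profile for some $c$, only $g^c(J)$-rows with $\min J>c$ contribute. In each sub-case I would parametrise the contributing $J=H\cup J'$ by $J'\subseteq\Sigma_\nu\setminus K$ of size $i-h$, substitute the multiplicity formulas of Algorithm \ref{alg-D2G}, and use the Pascal telescoping of Fact \ref{fact-algo-inter} together with Vandermonde-type identities to collapse the inner sum over $J'$ of fixed size $i$ into the coefficient $\binom{\nu-k}{i-h}$.

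The payoff is that in each sub-case $T(K,v)$ reduces, up to a non-zero overall factor, to a sum $\sum_{i=h}^{\nu-k+h}\binom{\nu-k}{i-h}z_i$, which vanishes by identity \eqref{delta_z-equ} at parameter $h$. I expect the step-profile case to be the cleanest, and the sub-cases ``$v_L$ constant with $L$ entirely on one side of $a$'' to be the main obstacle: there one must carefully balance the $g(J)$-contributions (where $a=c_*(J)$ can arise from $a\in H$, $a\in J'$, or $a=\nu-r$) against the $g^{a-1}(J)$- or $g^a(J)$-contributions, and verify that the specific binomial ratios installed by Algorithm \ref{alg-D2G} combine so as to deliver exactly the coefficient required to invoke \eqref{delta_z-equ}.
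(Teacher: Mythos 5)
Your overall route is the paper's: you restrict attention to the words $g(J)$ and $g^c(J)$, note that $J\cap K=H$ is forced, classify the admissible $v_L$ as constants or step profiles, split according to the position of $c$ relative to $H$ and $L$, and collapse the sum over the contributing $J$ of fixed cardinality $i$ into a binomial coefficient times $z_i$. The $L=\emptyset$ case via Fact \ref{fact-algo-inter} and constraint (\ref{delta_z-equ}) at rank $k$ is correct and matches the paper's disposal of the diagonal words $w=K$.

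The gap is in the announced endgame. After collapsing the sum over $J$, the coefficient of $z_i$ is \emph{not} proportional to $\binom{\nu-k}{i-h}$: what comes out is of the form $\binom{\nu -c -h -\lambda}{i -h}\binom{\nu -c -i}{r -i}/\binom{\nu -1 -i}{r -i}$ (the quantities (\ref{g(J)-c_notin_H})--(\ref{g^{c-1}(J)}), packaged as $f_z(h',\lambda',c')$ in (\ref{eq-f_z})), and this is not, up to a nonzero factor, a single instance of (\ref{delta_z-equ}) at rank $h$. Two further reductions are needed and are absent from your plan: first, when $\lambda'>0$ (i.e.\ some elements of $L$ lie above $c$), a Pascal-rule recurrence $f_z(h',\lambda',c')=f_z(h',\lambda'-1,c')-f_z(h'+1,\lambda'-1,c')$ must be iterated down to $\lambda'=0$, which changes the first parameter; second, for $\lambda'=0$ one needs the identity $\binom{\nu -c' -h'}{i -h'}\binom{\nu -c' -i}{r -i}/\binom{\nu -1 -i}{r -i}=\binom{\nu -c' -h'}{r -h'}\binom{\nu -1 -h'}{i -h'}/\binom{\nu -1 -h'}{r -h'}$ followed by Vandermonde's identity, which expresses $f_z(h',0,c')$ as a \emph{linear combination of the constraints (\ref{delta_z-equ}) at all ranks $h',\ldots,k-1$}, not the single constraint at rank $h$. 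So the verification you defer to the end would fail as stated: the ``coefficient required to invoke (\ref{delta_z-equ})'' at one rank never appears, and the vanishing of $R(H,L,v)$ genuinely uses the full system of $k+1$ equations together with the condition $h'+\lambda'<k$ guaranteeing the recurrence stays within range. You should replace the last paragraph of your plan by this two-stage reduction (recurrence in $\lambda$, then Vandermonde at $\lambda=0$).
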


\begin{proof} 
$(Q, P)$ satisfies $(k_=)$ if and only if, given any $k$ symbols $j_1, j_2 ,\ldots, j_k\in\Sigma_\nu$, for all $w\in\Sigma_\nu^k$, the number of rows in $P$ that coincide with $w$ on their coordinates of index $(j_1, j_2 ,\ldots, j_k)$ equals the number of such rows in $Q$. Equivalently, $\tilde{z}$ must satisfy the following relation:
\begin{align}\label{gamma_z-equ}\textstyle
\sum_{u = (u_0, u_1 ,\ldots, u_{\nu-1})\in\Sigma_\nu^\nu: u_K = w} \tilde{z}(u)	&=0,
	&K\subseteq\Sigma_\nu,\,\card{K} = k,\ w\in\Sigma_\nu^k.
\end{align}

%% _______________ paramètres à considérer
Consider a set $K = \set{j_1, j_2 ,\ldots, j_k}$ of $k$ pairwise distinct symbols from $\Sigma_\nu$, and a word $w$ from $\Sigma_\nu^k$. Since $(Q, P)$ can be interpreted as $(N, D)$, which satisfies $(k_=)$, we already know that $\tilde{z}$ satisfies \cref{gamma_z-equ} for $(K, w)$ when $w$ matches the indices in $K$. So it remains to show that $\tilde{z}$ satisfies \cref{gamma_z-equ} at $(K, w)$ for all $w\in\Sigma_\nu^k\setminus\set{(j_1, j_2 ,\ldots, j_k)}$. 
Given $(K, w)$, we partition the set $K$ based on its relation to $w$. Specifically, we define $H$ as the set of indices $j_i\in K$ such that $w_i = j_i$, and $L$, its complement in $K$. 
Proving that \cref{gamma_z-equ} holds at $(K, w)$ for all $w\in\Sigma_\nu^k\setminus\set{(j_1, j_2 ,\ldots, j_k)}$ is equivalent to showing that, for any partition $(H, L)$ of $K$ where $|H| = h <k$ and $L = \set{\ell_1, \ell_2 ,\ldots, \ell_{k-h}}$, $\tilde{z}$ satisfies the following identity for all $v = (v_1, v_2, \ldots, v_{k-h})\in\Sigma_\nu^{k-h}$ such that $v_s\neq\ell_s$ for $s\in[k-h]$: 
\begin{align}%\label{e-def-R_HLv}	%
R(H, L, v)\textstyle:=\sum_{u = (u_0, u_1 ,\ldots, u_{\nu-1})\in\Sigma_\nu^\nu: 
	u_H = H \wedge u_L = v} \tilde{z}(u) &=0.
\end{align}

We order the elements of $L$ so that $\ell_1 < \ell_2 <\cdots < \ell_{k-h}$. By construction, the words $u\in\Sigma_\nu^\nu$ for which $\tilde{z}(u)$ is assigned a non-zero value are of the form $g(J)$ or $g^c(J)$ where $J\subseteq\Sigma_\nu$, $z_{|J|}\neq 0$, and $c$ is a natural number less than $c_*(J)$. 
In particular, no row $u$ in $(Q, P)$ can satisfy $(u_{\ell_1}, u_{\ell_2} ,\ldots, u_{\ell_{k-h}})=(v_1, v_2 ,\ldots, v_{k-h})$ unless there exist two natural numbers $s < k-h$ and $c\leq\nu-r$ such that $c\leq c_*(H)$ and
$$\begin{array}{rll}
v_1 = v_2 =\cdots= v_s
	&= c + 1 	&\notin\set{\ell_1, \ell_2 ,\ldots, \ell_s},						\\ 
v_{s+1} = v_{s+2} =\cdots= v_{k-h}
	&= c			&\notin\set{\ell_{s+1}, \ell_{s+2} ,\ldots, \ell_{k-h}}.
\end{array}$$ 
 %	
%% _______________ quantités intervenant dans R(H, L, v)
For such a triple $(H, L, v)$, the contribution of \cref{alg-D2G} to $R(H, L, v)$ is as follows:
\begin{itemize}
	\item\label{it-g-r} if $s = 0$, by line \ref{alg-g-r}, setting $\tilde{z}(g(J))$ to $z_r$ for each subset $J$ of size $r$ of $\Sigma_\nu$ such that $c_*(J) = c$, $H\subseteq J$, and $J\cap L = \emptyset$. Recall that $c_*(J)$ is defined to be the smallest integer in $J\cup\set{\nu-r}$, which ensures $J\subseteq\set{c_*(J), c_*(J)+1 ,\ldots, \nu-1}$. Moreover, since $|J| = r$, this integer belongs to $J$. Thus, $J$ should more precisely satisfy:
\begin{align}\label{eq-cond_J-g}
	H\cup\set{c}\subseteq J\subseteq \Sigma_\nu\setminus(L\cup\Sigma_c).
\end{align}
	\item\label{it-g-i} If $s = 0$ and $c < \nu-r$, by line \ref{alg-g-i}, setting $\tilde{z}(g(J))$ to $\binom{\nu-c-1-i}{r-i}/\binom{\nu-1-i}{r-i}\times z_i$ for each $i\in\set{0, 1 ,\ldots r-1}$ such that $z_i\neq 0$, and each $J\subseteq\Sigma_\nu$ such that $|J| = i$, $c_*(J) = c$, $H\subseteq J$, and $J\cap L = \emptyset$. Note that the integer $c_*(J)$, if it is less than $\nu-r$, necessarily belongs to $J$. Thus, again, the considered subsets $J$ of $\Sigma_\nu$ must satisfy \cref{eq-cond_J-g}.
	\item\label{it-g^c} If $c <c_*(H)$ (in which case $c\notin H$ and $c <\nu-r$), by line \ref{alg-g^c}, setting $\tilde{z}(g^c(J))$ to $\binom{\nu-c-2-i}{r-1-i}/\binom{\nu-1-i}{r-i}\times z_i$ for each $i\in\set{0, 1 ,\ldots r-1}$ such that $z_i\neq 0$, and each subset $J$ of size $i$ of $\Sigma_\nu$ such that $c_*(J) >c$ (implying $J\subseteq\set{c+1, c+2 ,\ldots, \nu-1}$), $H\subseteq J$, and $J\cap L = \emptyset$. 
In this case, $\ell_{s+1} > c$ and, if $s >0$, $\ell_s\leq c$. 
To summarize, the subsets $J$ of $\Sigma_\nu$ considered for this case satisfy $|J| <r$, and: 
\begin{align}\label{eq-cond_J-g^c}
	H\subseteq J\subseteq
		\Sigma_\nu\setminus(\set{\ell_{s+1}, \ell_{s+2} ,\ldots, \ell_{k-h}}\cup\Sigma_{c+1}). 
\end{align}
	\item\label{it-g^{c-1}} If $s = 0$ and $\ell_{k-h} <c$, by line \ref{alg-g^c}, setting $\tilde{z}(g^{c-1}(J))$ to $\binom{\nu-c-1-i}{r-1-i}/\binom{\nu-1-i}{r-i}\times z_i$ for each $i\in\set{0, 1 ,\ldots, r-1}$ such that $z_i\neq 0$, and each $J\subseteq\Sigma_\nu$ such that $|J| = i$, $c_*(J)\geq c$ (implying $J\subseteq\Sigma_\nu\setminus\Sigma_c$), $H\subseteq J$, and $J\cap L = \emptyset$. Since $L\subseteq\Sigma_c$, the subsets $J$ of $\Sigma_\nu$ to consider in this case are those satisfying $|J| <r$, and: 
\begin{align}\label{eq-cond_J-g^{c-1}}
	H\subseteq J\subseteq \Sigma_\nu\setminus\Sigma_c.
\end{align}
\end{itemize}

We denote by $\lambda$ the cardinality of the set $\set{c+1, c+2 ,\ldots, \nu-1}\cap L$. In particular, this cardinality equals the positive integer $k-h-s$ in the third case, and $0$ in the fourth. 
 %
%% ____ contribution des g(J)
For $i\in\set{0, 1 ,\ldots, r}$, the number of subsets $J$ of size $i$ of $\Sigma_\nu$ satisfying \cref{eq-cond_J-g} is $\binom{\nu-c-h-\lambda}{i-h}$ if $c\in H$ and $\binom{\nu-c-1-h-\lambda}{i-1-h}$ otherwise. 
If $s = 0$, it follows that the contribution of the lines \ref{alg-g-r} and \ref{alg-g-i} of \cref{alg-D2G} to $R(H, L, v)$ is given by the expression \cref{g(J)-c_in_H} if $c\in H$, \cref{g(J)-c_notin_H} if $c\notin H$, below:
\begin{align}
\label{g(J)-c_in_H}	
\textstyle
	\binom{\nu-c-h-\lambda}{r-h}		\times 	z_r		&\textstyle
	+\sum_{i = h}^{r-1}\binom{\nu-c-h-\lambda}{i-h}\times\binom{\nu-c-1-i}{r-i}/\binom{\nu-1-i}{r-i} \times z_i,
\\\label{g(J)-c_notin_H}	
\textstyle
	\binom{\nu-c-1-h-\lambda}{r-1-h}	\times	z_r		&\textstyle
	+\sum_{i = h+1}^{r-1}\binom{\nu-c-1-h-\lambda}{i-1-h}\times\binom{\nu-c-1-i}{r-i}/\binom{\nu-1-i}{r-i} \times z_i.
\end{align}

Otherwise, these lines do not contribute to $R(H, L, v)$.
For $i\in\set{0, 1 ,\ldots, r-1}$, the number of subsets $J$ of size $i$ of $\Sigma_\nu$ satisfying \cref{eq-cond_J-g^c} and \cref{eq-cond_J-g^{c-1}} is $\binom{\nu-c-1-h-\lambda}{i-h}$ and $\binom{\nu-c-h}{i-h}$, respectively.
We define:
\begin{align}
\label{g^c(J)}				
&\textstyle
	\ \ \sum_{i = h}^{r-1}\binom{\nu-c-1-h-\lambda}{i-h} \times\binom{\nu-c-2-i}{r-1-i}/\binom{\nu-1-i}{r-i} \times z_i,\\
\label{g^{c-1}(J)}		
&\textstyle
	\ \ \sum_{i = h}^{r-1}\binom{\nu-c-h}{i-h} \times\binom{\nu-c-1-i}{r-1-i}/\binom{\nu-1-i}{r-i} \times z_i.
\end{align}
Therefore, the contribution of line \ref{alg-g^c} in the algorithm to $R(H, L, v)$ is equal to:
%% NB c\notin H sachant c\leq c_*(H) = >c <c_*(H) (mais c'est qd même+explicite de mettre $c <\nu-r$)
\begin{itemize}
\item\cref{g^c(J)} if $c\notin H$, $c <\nu-r$, and 
		either $s >0$ and $\ell_s \leq c <\ell_{s+1}$,
		or $s = 0$ and $c <\ell_1$;
\item\cref{g^{c-1}(J)} if $s = 0$ and $\ell_{k-h} <c$.
\end{itemize}
Otherwise, the contribution is 0.

% ____ Synthèse
Hence, whenever $R(H, L, v)$ is not trivially zero (i.e., if $\tilde{z}(u)\neq 0$ for at least one $u\in\Sigma_\nu^\nu$ such that $u_H = H$ and $u_L = v$), it is the sum of one or more of the quantities \cref{g(J)-c_notin_H,g(J)-c_in_H,g^c(J),g^{c-1}(J)}. 
The possible cases for the evaluation of $R(H, L, v)$ are listed below. In each case, we express $R(H, L, v)$ as a sum of the quantities \cref{g(J)-c_notin_H,g(J)-c_in_H,g^c(J),g^{c-1}(J)}: 
\begin{enumerate}%%TODO%%[C{a}se 1.]
\item\label{g-c_in_H+g^{c-1}}		%% pour $f_z$ il faut $c <=\nu-r$, $\lambda = 0$ et $h <k$
$s = 0$, $c\in H$, and $\ell_{k-h} <c$ (thus $\lambda = 0$):
						$R(H, L, v) = \cref{g(J)-c_in_H}+\cref{g^{c-1}(J)}$.
\item\label{g-c_in_H}				%% pour $f_z$ il faut $c <\nu-r$, $\lambda <=k-h$ et $h <k$
%% NB trivialement nul en $c = \nu-r$ sachant $\lambda >0$; 
$s = 0$, $c\in H$, $c <\ell_{k-h}$ (thus $\lambda\neq 0$),
	 and $c <\nu-r$:	$R(H, L, v) = \cref{g(J)-c_in_H}$.
\item\label{g+g^c}					%% pour $f_z$ il faut $c <\nu-r$, $\lambda = k-h$ et $h <k$
$s = 0$, $c\notin H$, $c <\ell_1$ (thus $\lambda = k-h$), 
	and $c <\nu-r$:	$R(H, L, v) = \cref{g(J)-c_notin_H}+\cref{g^c(J)}$.
\item\label{g-c_notin_H+g^{c-1}} 	%% pour $f_z$ il faut $c <=\nu-r$, $\lambda = 0$ et $h <k$
$s = 0$, $c\notin H$, and $\ell_{k-h} <c$ (thus $\lambda = 0$):
						$R(H, L, v) = \cref{g(J)-c_notin_H}+\cref{g^{c-1}(J)}$.	
\item\label{g-c_notin_H} 			%% pour $f_z$ il faut $c <\nu-r$ et $\lambda <k-h$
%% NB trivialement nul en $c = \nu-r$ sachant $\lambda >0$
$s = 0$, $c\notin H$, $\ell_1 <c <\ell_{k-h}$ (thus $\lambda\neq 0, k-h$), 
	and $c <\nu-r$: 	$R(H, L, v) = \cref{g(J)-c_notin_H}$.
\item\label{g^c}					%% pour $f_z$ il faut $c <\nu-r$ et $\lambda <k-h$
$s >0$, $c\notin H$, $\ell_s \leq c <\ell_{s+1}$ (thus $\lambda = k-h-s\neq 0, k-h$), 
	and $c <\nu-r$: 	$R(H, L, v) = \cref{g^c(J)}$.
\end{enumerate}
Note that the expressions \cref{g(J)-c_notin_H,g(J)-c_in_H} are trivially zero if $c = \nu-r$ and $\lambda >0$. This explains why we do not consider the value $\nu-r$ for $c$ in cases \ref{g-c_in_H} and \ref{g-c_notin_H}.

%% _______________ R(H, L, v) as f_z
For each of the six cases identified, $R(H, L, v)$ can be expressed as a linear combination of numbers $f_z(h', \lambda', c')$ from \cref{lem-f_z}. We report these expressions in \cref{tab-fact6-valf}. Their proofs can be found in \cref{appendix:sec-fact_k=}. 
All the terms $f_z(h', \lambda', c')$ occurring in these expressions satisfy $h'+\lambda' <k$ and $c'\leq\nu-r$. This implies, by \cref{lem-f_z}, that for all of the 6 cases identified, $R(H, L, v)$ is equal to 0: the proof is complete.
\end{proof}
\begin{table}[t]{\footnotesize
\caption{The expression of $R(H, L, v)$ as a function of numbers $f_z(h', \lambda', c')$ of  \cref{lem-f_z}, all with parameters $(h', \lambda', c')$ such that $h'+\lambda' <k$ and $c'\leq \nu-r$, for the six cases to consider. Recall that the natural numbers $h$, $c$, and $\lambda$ always satisfy $h <k$, $\lambda\leq k-h$, and $c\leq\nu-r$.}
\label{tab-fact6-valf}
\begin{center}
\begin{tabular}{l|l}
Case	&Value of $R(H, L, v)$\\[4pt]\hline
\ref{g-c_in_H+g^{c-1}}.		%&$\cref{g^c(J)}+\cref{g^{c-1}(J)}$
	&$\!\!\!f_z(h, 0, c)$					\\
[4pt]
\ref{g-c_in_H}.				%&$\cref{g(J)-c_in_H}$
	&$\!\!\!f_z(h, \lambda-1, c+1)$,		where $\lambda >0$ and $c <\nu-r$\\
[5pt]
\ref{g+g^c}.				%&$\cref{g(J)-c_notin_H}+\cref{g^c(J)}$ 
	&$\!\!\!\left\{\begin{array}{ll}
		f_z(h, k-h-1, \nu-r)							&\text{if $c = \nu-r-1$}\\
		f_z(h, k-h-1, c+1)-f_z(h, k-h-1, c+2)	&\text{otherwise}
	\end{array}\right.$,					where $c <\nu-r$\\
[12pt]
\ref{g-c_notin_H+g^{c-1}}.	%&$\cref{g^c(J)}+\cref{g^{c-1}(J)}$
	&$\!\!\!\left\{\begin{array}{ll}
		f_z(h, 0, \nu-r)								&\text{if $c = \nu-r$}\\
		f_z(h, 0, c)-f_z(h, 0, c+1)					&\text{otherwise (thus $c <\nu-r$)}
	\end{array}\right.$						\\
[11pt]
\ref{g-c_notin_H}.			%&$\cref{g(J)-c_notin_H}$
	&$\!\!\!f_z(h+1, \lambda-1, c+1)$, 	where $0 <\lambda <k-h$ and $c <\nu-r$\\
[5pt]
\ref{g^c}.					%&$\cref{g^c(J)}$
	&$\!\!\!\left\{\begin{array}{ll}
		f_z(h, \lambda, \nu-r)							&\text{if $c = \nu-r-1$}\\
		f_z(h, \lambda, c+1)-f_z(h, \lambda-1, c+2)	&\text{otherwise}
	\end{array}\right.$, 					where $0 <\lambda <k-h$ and $c <\nu-r$\\
\end{tabular}
\end{center}}
\end{table}

Together, \cref{fact-algo-inter,fact-algo-G_D,fact-algo-k=} establish \cref{thm-reg_joker}. 
Note that the proposed construction easily extends to pairs $(N, D)$ of arrays that share a collection $M$ of rows in common. Basically, it suffices to replace each row $u = (u_0, u_1 ,\ldots, u_{\nu-1})$ of $M$ in both arrays by the row $g(J(u))$. The rest of the transformation remains unchanged. However, we are hardly interested in such pairs of arrays whose shared rows can be regarded as superfluous.

% ____________________________ Delta rég opt par PL
\section{Characterizing optimal regular CPAs through linear programming}
\label{sec-LP}
Given three integers $k > 0$, $d\geq k$, and $\nu > d$, we can deduce from \cref{thm-reg_joker} that $\gamma(\nu, d +2, k)\geq\delta(\nu, d, k)$. 
Our focus is on understanding how close the integer $d'$, as defined in \cref{thm-reg_joker}, can be to $d$. 
To investigate this proximity, we analyze the regular $(\nu, d)$-CPAs of a given strength $k$ that realize $\delta(\nu, d, k)$.

% ___________________________________________________ Delta opt (par PL)
According to \cref{prop-reg_delta,prop-reg_xy}, $\delta(\nu, d, k)$ is the value of the following mathematical program $P_{\nu, d, k}$ in integer variables:
$$P_{\nu, d, k}:=\ \left\{\begin{array}{rrlll}
\multicolumn{2}{l}{
	\max\ 2y_\nu/\left(\sum_{i =0}^\nu\binom{\nu}{i}y_i +\sum_{i =0}^d\binom{\nu}{i}x_i\right)
} &\\s.t.
	&-\sum_{i =h}^{\nu -k +h}\binom{\nu -k}{i -h}y_i +\sum_{i =h}^d \binom{\nu -k}{i -h}x_i
			&= 0,		&h\in\set{0, 1 ,\ldots, k}	&\cref{delta_xy-equ}\\
	&y_\nu	&> 0		&\\
	&y_0, y_1 ,\ldots, y_\nu, x_0, x_1 ,\ldots, x_d 	&\in\mathbb{N}
\end{array}\right..$$

We observe that $P_{\nu, d, k}$ admits feasible solutions for any choice of parameters $\nu, d, k$ such that $\nu\geq d\geq k >0$.
First, we know from \cref{thm-gamma_qpk} that $\Gamma(\nu, d, k)\neq\emptyset$.
According to \cref{prop-Gamma_2_Delta}, this implies $\Delta(\nu, d, k)\neq\emptyset$.
Finally, the proof of \cref{prop-reg_delta} shows how to derive a regular $(\nu, d)$-CPA of strength $k$ from an element of $\Delta(\nu, d, k)$ that may be non-regular.

We consider the continuous relaxation of $P_{\nu, d, k}$. 
For this new problem, we observe that, for any positive real $\lambda$, two solution vectors $(y, x)\in\mathbb{R}^{\nu +1}\times\mathbb{R}^{d +1}$ and $(\lambda y, \lambda x)$ are either both feasible or both infeasible. 
Moreover, two such vectors yield the same objective value. Accordingly, setting the value of $y_\nu$ to any positive value leaves the set of values of the feasible solutions unchanged. 
These considerations suggest introducing the following linear program $LP_{\nu, d, k}$ in continuous variables:
$$LP_{\nu, d, k}:=\ \ \left\{\begin{array}{rrlll}
\multicolumn{2}{l}{\min \sum_{i =0}^{\nu -1}\binom{\nu}{i}y_i +\sum_{i =0}^d\binom{\nu}{i}x_i}		&\\s.t.	&-\sum_{i =k}^{\nu -1} \binom{\nu -k}{i -k} y_i 
		 +\sum_{i =k}^d \binom{\nu -k}{i -k} x_i	&= 1	&(c_k)\\
		&-\sum_{i =h}^{\nu -k +h} \binom{\nu -k}{i -h} y_i 
		 +\sum_{i =h}^d \binom{\nu -k}{i -h} x_i	&= 0	&(c_h), &h\in\set{0, 1 ,\ldots, k-1}\\
	&y_0, y_1 ,\ldots, y_{\nu -1}, x_0, x_1 ,\ldots, x_d	&\geq 0		&
\end{array}\right..$$

% ________________ Delta opt :: PL
We observe that $LP_{\nu, d, k}$ admits optimal solutions for any parameter set $(\nu, d, k)$ such that $\nu\geq d\geq k >0$. On the one hand, we can derive from a feasible solution $(y, x)$ of $P_{\nu, d, k}$ (whose existence we know) the feasible solution 
\begin{align}\label{eq-sol_LP}
(y_0/y_\nu, y_1/y_\nu ,\ldots, y_{\nu -1}/y_\nu, x_0/y_\nu, x_1/y_\nu ,\ldots, x_d/y_\nu)
\end{align}
of $LP_{\nu, d, k}$: thus the domain of $LP_{\nu, d, k}$ is not empty. 
On the other hand, the objective function of $LP_{\nu, d, k}$, which we want to minimize, is bounded below by 0 over this domain.

This program can be considered equivalent to $P_{\nu, d, k}$ in the following sense: from an optimal solution of $LP_{\nu, d, k}$ with value $v$, we can deduce an optimal solution of $P_{\nu, d, k}$---and thus, a regular $(\nu, d)$-CPA of strength $k$---with value $2/(v +1)$, and vice versa.

\begin{proposition}\label{prop-Delta-PL}
For all integers $k > 0$, $d\geq k$, and $\nu > d$, if $\mathrm{opt}(LP_{\nu, d, k})$ denotes the optimum value of $LP_{\nu, d, k}$, then 
	$\delta(\nu, d, k) =2/\left(\mathrm{opt}(LP_{\nu, d, k}) +1\right)$.
\end{proposition}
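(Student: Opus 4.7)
The plan is to convert the fractional integer program $P_{\nu, d, k}$ to the linear program $LP_{\nu, d, k}$ by exploiting scale invariance of the objective ratio and by normalizing $y_\nu = 1$. Propositions \ref{prop-reg_delta} and \ref{prop-reg_xy} together establish that $\delta(\nu, d, k)$ is exactly the optimum value of $P_{\nu, d, k}$, so it suffices to tie this integer program to $LP_{\nu, d, k}$.

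First, I would argue that the optimum of $P_{\nu, d, k}$ equals the supremum of the same objective over the continuous relaxation with $y_\nu > 0$. Both the equations (\ref{delta_xy-equ}) and the objective $2y_\nu/\bigl(\sum_{i=0}^\nu\binom{\nu}{i}y_i + \sum_{i=0}^d\binom{\nu}{i}x_i\bigr)$ are invariant under multiplication of $(y, x)$ by any positive scalar. Any rational feasible continuous solution with $y_\nu > 0$ can therefore be scaled by a positive integer to produce an integer feasible solution with the same objective value; conversely, the continuous optimum is rational (since the LP has rational data) and thus realizable by such a scaling. The two optima coincide.

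Next, I would impose the normalization $y_\nu = 1$, which again does not change the reachable objective values by scale invariance. Under this normalization, the objective becomes $2/\bigl(1 + \sum_{i=0}^{\nu -1}\binom{\nu}{i}y_i + \sum_{i=0}^d\binom{\nu}{i}x_i\bigr)$, so its maximization is equivalent to the minimization of $\sum_{i=0}^{\nu -1}\binom{\nu}{i}y_i + \sum_{i=0}^d\binom{\nu}{i}x_i$, i.e., to the objective of $LP_{\nu, d, k}$. I would then match the constraints: for $h \in \{0, \ldots, k -1\}$, the bound $\nu - k + h < \nu$ implies that $y_\nu$ does not appear in (\ref{delta_xy-equ}), which therefore reduces to $(c_h)$; for $h = k$, the sum runs up to $i = \nu$, and isolating the term $\binom{\nu -k}{\nu -k}y_\nu = 1$ on the right-hand side yields $(c_k)$. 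The sign constraints $y_i, x_i \geq 0$ are inherited from the interpretation of these variables as row counts.

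Combining these observations yields $\delta(\nu, d, k) = 2/(\mathrm{opt}(LP_{\nu, d, k}) + 1)$. The only delicate point is the passage between the integer and the continuous formulations, but this is handled cleanly by scale invariance together with the rationality of the LP data (so that the continuous optimum is achieved at a rational vertex, which can then be scaled back to an integer feasible point); the rest is essentially a rewriting.
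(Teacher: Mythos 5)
Your proposal is correct and follows essentially the same route as the paper: one direction normalizes a representative vector of an optimal regular CPA by dividing by $y_\nu$ to obtain a feasible solution of $LP_{\nu, d, k}$, and the other takes a rational extremal optimal solution of $LP_{\nu, d, k}$ and scales it by a positive integer to recover, via Proposition \ref{prop-reg_xy}, the representative vector of a regular CPA of the claimed value. The scale-invariance and constraint-matching observations you make are exactly the ones underlying the paper's two inequalities.
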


\begin{proof}
We first show that $\mathrm{opt}(LP_{\nu, d, k})\leq 2/\delta(\nu, d, k) -1$.
Let $(N, D)\in\Delta(\nu, d, k)$ be a regular CPA with $(y, x)$ as the representative vector. 
It follows from \cref{prop-reg_xy} that $(y, x)$ is a feasible solution of $P_{\nu, d, k}$, with value $R^*(N, D)/R(N, D)$. 
We consider the scaled vector $(y', x')$ of $\mathbb{R}^\nu\times\mathbb{R}^{d +1}$ defined by \cref{eq-sol_LP}. By construction, $(y', x')$ is a feasible solution of $LP_{\nu, d, k}$, with value:
$$\begin{array}{rll}%
	1/y_\nu\times\left(
		\sum_{i =0}^{\nu -1}\binom{\nu}{i}y_i +\sum_{i =0}^d\binom{\nu}{i}x_i
	\right)
	&\displaystyle= (2R(N, D) -y_\nu)/y_\nu
	&= 2R(N, D)/R^*(N, D) -1.
\end{array}$$

We then deduce from \cref{prop-reg_delta} that $\mathrm{opt}(LP_{\nu, d, k})\leq 2/\delta(\nu, d, k) -1$.

Conversely, we show that $\delta(\nu, d, k)\geq 2/\left(\mathrm{opt}(LP_{\nu, d, k}) +1\right)$.
Let $(y^*, x^*)$ be an optimal solution of $LP_{\nu, d, k}$. 
We can assume that $(y^*, x^*)$ is an extreme point of $LP_{\nu, d, k}$ and thus, that the coordinates of $y^*$ and $x^*$ are rational numbers. 
So there exists a positive integer $R^*$ such that $(R^* y^*, R^* x^*)$ has integer coordinates. 
Since $(y^*, x^*)$ satisfies the constraints $(c_h)$, $h\in\set{0, 1 ,\ldots, k}$ of $LP_{\nu, d, k}$, 
	$(R^* y^*, R^* x^*)$ satisfies constraints \cref{delta_xy-equ} of $P_{\nu, d, k}$. 
We thus know from \cref{prop-reg_xy} that $(R^* y, R^*, R^* x)$ is the representative vector of a regular $(\nu, d)$-CPA $(N, D)$ of strength $k$ such that
$R^*(N, D) =R^*$ and 
$2R(N, D) =\sum_{i =0}^{\nu -1}\binom{\nu}{i} R^* y^*_i +R^* +\sum_{i =0}^d\binom{\nu}{i}R^* x^*_i$.
This CPA therefore satisfies:
$$\begin{array}{rll}
\displaystyle\frac{R^*(N, D)}{R(N, D)}
&\displaystyle= R^*\times\frac{2}{\sum_{i =0}^{\nu -1}\binom{\nu}{i} R^* y^*_i +R^* 
						+\sum_{i =0}^d\binom{\nu}{i}R^* x^*_i}
&\displaystyle= \frac{2}{\mathrm{opt}(LP_{\nu, d, k}) +1}.
\end{array}$$

We deduce that $\delta(\nu, d, k)\geq 2/\left(\mathrm{opt}(LP_{\nu, d, k}) +1\right)$.
\end{proof}

% ________________ Delta opt :: Base
Thereafter, we represent the bases of $LP_{\nu, d, k}$ by means of two sets $Y\subseteq\set{0, 1 ,\ldots, \nu -1}$ and $X\subseteq\set{0, 1 ,\ldots, d}$ of word weights that identify the set of basic variables $\set{y_i\,|\,i \in Y}\cup\set{x_i\,|\,i \in X}$. 

\begin{proposition}\label{prop-Delta_PL-Base}
Let $k >0$, $d \geq k$, $\nu >d$ be three integers, $Y$ be a subset of $\set{0, 1 ,\ldots, \nu -1}$, and $X$ be a subset of $\set{0, 1 ,\ldots, d}$. 
Then $(Y, X)$ is a basis of $LP_{\nu, d, k}$ if and only if $Y\cap X =\emptyset$ and $|Y \cup X| =k +1$.
\end{proposition}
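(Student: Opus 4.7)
The plan is to inspect the structure of the constraint matrix of $LP_{\nu, d, k}$ directly. Since the LP has exactly $k+1$ equality constraints (one per $h \in \{0,\ldots,k\}$), every basis must consist of exactly $k+1$ basic variables, so in particular $|Y|+|X|=k+1$. The column associated to the variable $y_i$ (for $i \in \{0,\ldots,\nu-1\}$) is $-v_i$ and the column associated to $x_i$ (for $i \in \{0,\ldots,d\}$) is $+v_i$, where $v_i := (\binom{\nu-k}{i-h})_{h=0}^{k} \in \mathbb{R}^{k+1}$ (with the convention that the binomial is $0$ when $i-h<0$ or $i-h>\nu-k$). Consequently, if some index $i$ lies in $Y\cap X$, the two corresponding columns are $-v_i$ and $+v_i$, which are linearly dependent; so $Y\cap X=\emptyset$ is necessary, and combined with $|Y|+|X|=k+1$ this gives $|Y\cup X|=k+1$.

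For the converse, write $Y \cup X = \{i_0<i_1<\cdots<i_k\} \subseteq \{0,\ldots,\nu-1\}$. Since flipping column signs does not affect linear independence, it suffices to show that the $(k+1)\times(k+1)$ matrix $M$ with $M_{h,j} = \binom{\nu-k}{i_j - h}$ is nonsingular. I would prove this via the Lindström--Gessel--Viennot lemma. Setting $n = \nu-k$, place sources $A_j = (i_k - i_j,\, i_j)$ and sinks $B_h = (i_k - h,\, n+h)$ in the integer lattice $\mathbb{Z}^2$; the number of lattice paths from $A_j$ to $B_h$ using unit east and north steps equals $\binom{(i_j-h)+(n+h-i_j)}{i_j-h}=\binom{n}{i_j-h}=M_{h,j}$. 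The identity-permutation paths $A_j \to B_j$ are all realizable because $j \leq i_j$ follows from the distinctness of $i_0, \ldots, i_j$, and $i_j \leq i_k - (k-j) \leq \nu-1-(k-j) = n+j-1$ follows from the ordering. Both sources and sinks progress strictly to the NW as their indices grow, so for any permutation $\sigma$ with an inversion $j < j'$, $\sigma(j) > \sigma(j')$, the two paths $A_j \to B_{\sigma(j)}$ and $A_{j'} \to B_{\sigma(j')}$ travel in opposite NW-SE senses and must share a lattice vertex. LGV therefore gives $\det M$ as the number of vertex-disjoint families $\{A_j \to B_j\}_{j=0}^{k}$; the explicit family of east-then-north paths $A_j \to (i_k-j,\, i_j) \to B_j$ is easily seen to be vertex-disjoint using $j<j'$ and $i_j < i_{j'}$, so $\det M \geq 1$.

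The main obstacle is the nonsingularity step. A purely algebraic approach using the generating-function identity $\sum_j \alpha_j \binom{n}{i_j - h} = [w^{n+h}] F(w)(1+w)^n$ with $F(w) = \sum_j \alpha_j w^{i_j}$ is feasible but becomes entangled: one has to combine the vanishing of $k+1$ consecutive coefficients of $F(w)(1+w)^n$ with the support constraint $\{i_j\}\subseteq \{0,\ldots,\nu-1\}$ in a non-trivial way. The LGV route sidesteps this by reducing the whole question to the construction of a single explicit non-crossing lattice path configuration, a routine verification.
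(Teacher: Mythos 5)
Your proof is correct, and on the substantive half of the equivalence it actually goes further than the paper's own argument. The paper handles the necessity of $Y\cap X=\emptyset$ exactly as you do (the columns of $y_i$ and $x_i$ are opposite vectors), and then establishes that the full constraint matrix has rank $k+1$ by a triangularity argument restricted to the columns of $x_0,\ldots,x_k$: evaluating a putative vanishing combination $\sum_h\lambda_h M_{h,\cdot}$ at the columns indexed $\nu,\nu+1,\ldots,\nu+k$ forces $\lambda_0,\lambda_1,\ldots,\lambda_k$ to vanish in turn. That settles the ``only if'' direction (every base has exactly $k+1$ elements, at most one per index), but it does not by itself show that an \emph{arbitrary} choice of $k+1$ columns with pairwise distinct indices is linearly independent, which is what the ``if'' direction requires. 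Your Lindström--Gessel--Viennot argument proves precisely this stronger fact: the submatrix $\bigl(\binom{\nu-k}{i_j-h}\bigr)_{h,j}$ has determinant equal to the number of vertex-disjoint path families, your bounds $j\leq i_j\leq(\nu-k)+j$ (from distinctness and from $i_k\leq\nu-1$) make each diagonal path realizable, and the explicit east-then-north family is indeed disjoint. The crossing argument for inverted permutations also holds, since all sources lie on the anti-diagonal $x+y=i_k$ and all sinks on $x+y=i_k+\nu-k$, so a discrete intermediate-value argument on the $y$-coordinates along successive anti-diagonals forces a shared vertex. In short, the two proofs diverge at the key lemma: the paper buys full row rank cheaply from a triangular subset of columns, while your LGV computation buys nonsingularity (indeed positivity) of every admissible $(k+1)\times(k+1)$ minor and thereby closes the sufficiency direction that the paper's written proof leaves implicit.
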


\begin{proof}
For $i\in\set{0, 1 ,\ldots, d}$, it is clear that two variables $y_i$ and $x_i$ cannot both be basic variables. We represent the constraint matrix of $LP_{\nu, d, k}$ by the $(k +1)\times(\nu +d +1)$ matrix $M$ where:
\begin{itemize}
	\item for $h\in\set{0, 1 ,\ldots, k}$, $M_{h, 0}, M_{h, 1} ,\ldots, M_{h, \nu +d}$ are the coefficients associated with the constraint $(c_h)$;
	\item for $i\in\set{0, 1 ,\ldots, \nu +d}$, $M_{0, i}, M_{1, i} ,\ldots, M_{k, i}$ are the coefficients associated with the variable $y_i$ if $i <\nu$, $x_{i -\nu}$ otherwise.
\end{itemize}
Accordingly, $M$ is defined for $(h, i)\in\set{0, 1 ,\ldots, k}\times \set{0, 1 ,\ldots, \nu +d}$ by
\begin{align}\nonumber
\begin{array}{rll}
M_{h, i}	&=\begin{cases}
						-\binom{\nu -k}{i -h}		&\text{if $i <\nu$},		\\				%y_i
						\binom{\nu -k}{i -\nu -h}	&\text{otherwise}.	\\ %if $i\geq\nu$	%x_{i -\nu}
					\end{cases}
\end{array}
\end{align}

Let $\lambda_0, \lambda_1 ,\ldots, \lambda_k$ be $k+1$ reals such that $\sum_{h =0}^k \lambda_h M_{h, i} =0$, $i\in\set{0, 1 ,\ldots, \nu +d}$. We can successively prove by induction on the index $i$ that $\lambda_0, \lambda_1, \ldots, \lambda_k$ are all zero, implying that the matrix $M$ has full rank.
\end{proof}

% ________________ Delta opt :: Solution de base
Now that we have characterized the bases of $LP_{\nu, d, k}$, we derive the expression of its basic solutions. 

%% NB ici la généralisation à $d\leq\nu$ ne coûterait rien
\begin{proposition}\label{prop-Delta_PL-SB}
Let $k > 0$, $d\geq k$, and $\nu > d$ be three integers, and $(Y, X)$ be a basis of $LP_{\nu, d, k}$. Then the basic variables of the corresponding basic solution are defined by (the other variables are all zero):
\begin{align}\label{eq-SB_xy}
\left\{\begin{array}{rrll}
y_i	&=-\prod_{a\in(Y\cup X)\setminus\set{i}}\frac{\nu -a}{i -a}/\binom{\nu}{i},	&i\in Y\\
x_i	&= \prod_{a\in(Y\cup X)\setminus\set{i}}\frac{\nu -a}{i -a}/\binom{\nu}{i},	&i\in X\\
\end{array}\right..
\end{align}
\end{proposition}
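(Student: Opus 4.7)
The plan is straightforward: substitute the proposed values into each of the $k+1$ linear constraints and verify that they are satisfied; uniqueness of the basic solution associated with the base $(Y, X)$ (which has cardinality $k+1$ by Proposition \ref{prop-Delta_PL-Base}) then finishes the argument. Write $S = Y \cup X$. Because the minus sign built into $y_i$ cancels the minus on $y_i$'s coefficient in every $(c_h)$, the left-hand side of $(c_h)$ collapses into a single sum over $i \in S$ of the form
\begin{equation*}
 \sum_{i \in S} \frac{\binom{\nu-k}{i-h}}{\binom{\nu}{i}} \prod_{a \in S \setminus \{i\}} \frac{\nu - a}{i - a}.
\end{equation*}
Rewriting the binomial ratio in terms of falling factorials as $[i]_h\,[\nu-i]_{k-h}/[\nu]_k$, where $[x]_m := x(x-1)\cdots(x-m+1)$, turns the binomials into polynomial-in-$i$ factors, and the whole proof reduces to evaluating rational sums of the type handled by Lemma \ref{lem-id}.

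For the homogeneous constraints $(c_h)$ with $h < k$, I would split $[\nu-i]_{k-h} = (\nu-i)\,[\nu-i-1]_{k-h-1}$ and use the telescoping $(\nu-i)\prod_{a \in S \setminus \{i\}}(\nu-a) = \prod_{a \in S}(\nu-a)$ to extract the common constant $C := \prod_{a\in S}(\nu-a)$. The remaining polynomial factor has degree $k-1$ and factorizes as $(-1)^{k-h-1}\prod_{a \in A_h}(i-a)$ with $A_h = \{0,\ldots,h-1\} \cup \{\nu-k+h+1,\ldots,\nu-1\}$, $|A_h| = k-1$. Lemma \ref{lem-id} applied to $(A_h, S)$ then gives $h(A_h, S) = 0$ since $|S| = k+1 > |A_h|+1$, verifying $(c_h) = 0$.

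The non-homogeneous constraint $(c_k)$ requires a small twist because the relevant polynomial $\prod_{j=0}^{k-1}(i-j)$ has degree $k$, with no spare $(\nu-i)$ factor available to pair against $\prod_{a \in S \setminus \{i\}}(\nu-a)$. The trick is to enlarge $S$ by the point $\nu \notin S$, setting $S' = S \cup \{\nu\}$, and use the identity $\prod_{a \in S' \setminus \{i\}}(i-a) = -(\nu-i)\prod_{a \in S \setminus \{i\}}(i-a)$ to recast the partial sum over $S$ as a Lagrange-type sum over $S'$ minus the term at $i = \nu$. Applying Lemma \ref{lem-id} to $(A, S')$ with $A = \{0, \ldots, k-1\}$ (still $|S'| = k+2 > |A|+1$) yields zero for the full sum, and the missing term at $\nu$ contributes exactly $[\nu]_k/C$, producing the constant $1$ after cancellation against the prefactor $-C/[\nu]_k$.

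The main obstacle is precisely this asymmetry: the homogeneous cases fall out of Lemma \ref{lem-id} immediately, whereas $(c_k)$ sits one degree too high for a direct application, forcing the auxiliary trick of extending $S$ by $\{\nu\}$. Careful sign bookkeeping—tracking the minuses introduced by $-y_i$, by $(\nu-i) = -(i-\nu)$, and by the $(-1)^{k-h-1}$ in the polynomial factorization—is needed to make sure the final verification lands on $+1$ rather than $-1$, but all other steps are routine algebraic rearrangements.
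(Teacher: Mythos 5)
Your proposal is correct and follows essentially the same route as the paper: rewrite $\binom{\nu-k}{i-h}/\binom{\nu}{i}$ as $[i]_h[\nu-i]_{k-h}/[\nu]_k$, pull out the constant $\prod_{a}(\nu-a)$, and reduce each constraint to the Lagrange-type sum of Lemma \ref{lem-id} with a cardinality gap of two (this is precisely the paper's identity (\ref{eq-SB-tech})). The only difference is organizational: the paper adjoins the point $\nu$ with value $-1$ to the support from the outset (via $Z=Y\cup X\cup\{\nu\}$), which homogenizes all $k+1$ constraints and lets one uniform computation cover every $h$, whereas you adjoin $\nu$ only when treating the non-homogeneous constraint $(c_k)$ --- both variants land on the same application of Lemma \ref{lem-id}.
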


\begin{proof}
By \cref{prop-Delta_PL-Base}, $Y\cap X =\emptyset$.
We define $Z :=Y\cup X\cup\set{\nu}$, and consider the $(\nu +1)$-dimensional vector $z$ (analogous to that in \cref{eq-xy2z}) whose components for $i\in\set{0, 1 ,\ldots, \nu}$ are defined by
\begin{align}\label{eq-SB_z}
z_i	&=\begin{cases}
		\prod_{a\in Z\setminus\set{i,\nu}}\frac{\nu -a}{i -a}/\binom{\nu}{i}	
			&\text{if $i\in Z\setminus\set{\nu}$},	\\
		-1 	&\text{if $i =\nu$},			\\
		0		&\text{otherwise}.
	\end{cases}
\end{align}

Our goal is to show that the proposed solution $(y, x)$ satisfies the constraints $(c_0) ,\ldots, (c_{k-1}), (c_k)$ of $LP_{\nu, d, k}$, which is equivalent to showing that $z$ satisfies the equalities \cref{delta_z-equ}.
Let $h\in\set{0, 1 ,\ldots, k}$. We define two sets $A_h$ and $B_h$ of indices that help to express the quantity $\sum_{i =h}^{\nu -k +h}\binom{\nu -k}{i -h}z_i$ that we want to prove to be zero. The set 
$B_h :=Z\cap\set{h, h +1 ,\ldots, \nu -k +h}$ contains the indices of the non-zero coordinates of $z$ in the range $\{h, h +1, \ldots, \nu -k +h\}$, while $A_h :=(\set{0, 1 ,\ldots, \nu}\setminus\set{h, h +1 ,\ldots, \nu -k +h})\setminus Z$ groups the indices of the zero coordinates of $z$ outside the range $\set{h, h +1, \ldots, \nu -k +h}$.
We obtain the following expression of $\sum_{i =h}^{\nu -k +h}\binom{\nu -k}{i -h}z_i$ as a function of these sets, where $f(A_h, B_h)$ is the number defined in \cref{lem-fAB}:
\begin{align}\label{eq-SB-tech}
\textstyle\sum_{i =h}^{\nu -k +h} \binom{\nu -k}{i -h} z_i	
&\textstyle	=(-1)^{k-h +1}
	\frac{\prod_{i\in Z\setminus\set{\nu}}(\nu -i)}{\prod_{i =0}^{k-1}(\nu -i)}\times
	f(A_h, B_h).
\end{align}

A detailed proof of identity \cref{eq-SB-tech} is given in \cref{appendix:sec-PLSB}. 
We know from \cref{prop-Delta_PL-Base} that $|Z| =|Y\cup X| +1 =k +2$. Thus, we deduce from their definition that the cardinalities of the sets $A_h$ and $B_h$ satisfy:
$$\begin{array}{rll}	
|A_h|	&=\card{\set{0, 1 ,\ldots, \nu}} 
			-\card{\set{h, h +1 ,\ldots, \nu -k +h}}
			-\card{Z}
			+\card{B_h}\\
		&=(\nu +1) -(\nu -k +1) -(k +2) +|B_h|\qquad=|B_h| -2.
\end{array}$$
According to \cref{lem-fAB}, the right-hand side of identity \cref{eq-SB-tech} is therefore zero. We conclude that $z$ does indeed satisfy the equality \cref{delta_z-equ} of rank $h$. 
\end{proof}

Note that \cref{prop-Delta_PL-SB} implies in particular that $LP_{\nu, d, k}$ has no degenerate basic solution provided that $d <\nu$.
The expression \cref{eq-SB_xy} of basic solutions allows us to characterize the {\em feasible} bases, i.e., those whose associated basic solutions are feasible.

% ________________ Delta opt :: Base réal
\begin{proposition}\label{prop-Delta_PL-SRB}
Let $k > 0$, $d\geq k$, and $\nu > d$ be three integers, and $(Y, X)$ be a basis of $LP_{\nu, d, k}$. 
Then $(Y, X)$ is {\em feasible} if and only if the elements of $Y\cup X$, when listed in decreasing order, alternate between $X$ and $Y$, starting with $X$. 
\end{proposition}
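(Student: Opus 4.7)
The plan is to exploit the explicit expression of the basic solution provided by Proposition \ref{prop-Delta_PL-SB}, and simply read off the signs of its coordinates. Since the base $(Y,X)$ is feasible if and only if $y_i\geq 0$ for every $i\in Y$ and $x_i\geq 0$ for every $i\in X$, the whole question reduces to a parity count.

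First, observe that $Y\cup X\subseteq\set{0,\ldots,\nu-1}$, so that $\nu-a>0$ for every $a\in Y\cup X$. Consequently, the factor $\prod_{a\in (Y\cup X)\backslash\set{i}}(\nu-a)/\binom{\nu}{i}$ appearing in (\ref{eq-SB_xy}) is strictly positive for every $i\in Y\cup X$. Hence the sign of $y_i$ (resp.\ of $x_i$) coincides with the sign of
$$
	-\prod_{a\in (Y\cup X)\backslash\set{i}}\frac{1}{i-a}
	\qquad\text{(resp.\ with the sign of\ }\prod_{a\in (Y\cup X)\backslash\set{i}}\tfrac{1}{i-a}\text{).}
$$
For $a\in (Y\cup X)\backslash\set{i}$, the factor $i-a$ is negative precisely when $a>i$. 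Denoting by $N_i^+$ the number of $a\in (Y\cup X)\backslash\set{i}$ with $a>i$, we conclude that $x_i\geq 0$ iff $N_i^+$ is even, and $y_i\geq 0$ iff $N_i^+$ is odd.

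Now enumerate the $k+1$ elements of $Y\cup X$ in decreasing order as $a_1>a_2>\ldots>a_{k+1}$. By construction, $N_{a_j}^+=j-1$. Therefore feasibility of $(Y,X)$ is equivalent to $a_j\in X$ whenever $j$ is odd and $a_j\in Y$ whenever $j$ is even, which is exactly the alternation property stated in the proposition (with the largest element belonging to $X$).

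The argument is almost entirely mechanical once Proposition \ref{prop-Delta_PL-SB} is in hand; the only step that requires a little care is the sign bookkeeping in the product $\prod(i-a)$, and the observation that the positivity of $\nu-a$ lets us ignore that factor altogether. No obstacle is expected beyond this sign analysis.
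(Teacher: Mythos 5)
Your proposal is correct and follows essentially the same route as the paper: both read off the signs of the basic solution given by Proposition \ref{prop-Delta_PL-SB}, note that the factors $\nu-a$ and $\binom{\nu}{i}$ are positive, and reduce feasibility to the parity of the number of elements of $Y\cup X$ lying above $i$ (the paper phrases this via the rank $\mathop{rg}(i)$, which is the same count). Your explicit conclusion that the alternation must start with $X$ at the largest element matches the remark the paper makes immediately after the proposition.
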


\begin{proof}	%% Immédiat depuis \cref{eq-SB_xy}. 
If $(y, x)$ is the basic solution determined by $(Y, X)$, then $(Y, X)$ is feasible if and only if the coordinates of $(y, x)$ are all non-negative. 
We know from \cref{prop-Delta_PL-Base} that $Y\cup X$ is a set of $k +1$ totally ordered elements. 
Thus, we can assign to each element $i\in Y\cup X$ an integer in $[k +1]$, denoted by $\mathop{rk}(i)$, which indicates the rank of $i$ in $Y\cup X$. 
Let $i\in X$. 
By \cref{eq-SB_xy}, we have:
$$\begin{array}{rll}
x_i	&=\prod_{a\in(Y\cup X)\setminus\set{i}}\frac{\nu -a}{i -a}/\binom{\nu}{i}
	&=\prod_{a\in(Y\cup X)\setminus\set{i}}\frac{\nu -a}{|i -a|}/\binom{\nu}{i}\times(-1)^{k +1 -\mathop{rk}(i)}.
\end{array}$$

Consequently, $x_i$ is non-negative if and only if $\mathop{rk}(i) \equiv k +1 \bmod{2}$. 
Symmetrically, we can deduce from \cref{eq-SB_xy} that for all $i\in Y$, $y_i$ is non-negative if and only if $\mathop{rk}(i) \not\equiv k +1 \bmod{2}$. This means that the elements of $Y\cup X$, sorted in descending order, must alternately belong to $X$ and $Y$, starting with $X$, to ensure feasibility.
\end{proof}

In particular, \cref{prop-Delta_PL-SRB} implies that the largest index in a feasible basis $(Y, X)$ belongs to $X$, and is therefore less than or equal to $d$. 
We now switch to {\em optimal} feasible bases, i.e., those whose associated basic solutions are optimal.

% ________________ Delta opt :: Base opt
\begin{proposition}\label{prop-Delta_PL-SB_opt}
Let $k >0$, $d \geq k$, and $\nu >d$ be three integers, and $(Y, X)$ be a feasible basis of $LP_{\nu, d, k}$.
If $(Y, X)$ is optimal, then $d \in X$ and $0\in Y\cup X$.
\end{proposition}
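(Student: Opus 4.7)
The plan is to write the objective value of a feasible basic solution as a function of the set $W :=Y\cup X$ alone, then argue by local pivoting that any optimal $W$ must contain both $d$ and $0$. By Proposition \ref{prop-Delta_PL-SRB}, feasibility forces the largest element of $W$ to lie in $X\subseteq\set{0 ,\ldots, d}$, so $W\subseteq\set{0 ,\ldots, d}$, and conversely every $(k +1)$-subset of $\set{0 ,\ldots, d}$ determines a unique feasible base via the alternation rule. Combining the formula of Proposition \ref{prop-Delta_PL-SB} with the sign constraints $y_i, x_i\geq 0$, and using $\nu -a >0$ for all $a\in W$, the objective value simplifies to the symmetric expression
$$\mathrm{Obj}(W) = \sum_{i\in W} \prod_{a\in W\backslash\set{i}} \frac{\nu -a}{\abs{i -a}}.$$

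Assume now that $i^* :=\max(W) <d$, and set $W' :=(W\backslash\set{i^*})\cup\set{d}$; this again defines a base (Proposition \ref{prop-Delta_PL-Base}) and remains feasible, since replacing the largest element of $W$ by a yet larger admissible one preserves the alternating pattern. A termwise comparison shows that each summand of $\mathrm{Obj}(W')$ is strictly smaller than the corresponding summand of $\mathrm{Obj}(W)$: for $i\in W\backslash\set{i^*}$, only the factor involving $i^*$ (resp. $d$) changes, yielding the ratio $(\nu -d)(i^* -i)/\left((\nu -i^*)(d -i)\right)$, a product of two sub-unit factors because $i <i^* <d <\nu$; for the summand of index $d$ in $W'$ versus that of index $i^*$ in $W$, the ratio $\prod_{i\in W\backslash\set{i^*}}(i^* -i)/(d -i)$ is also less than one. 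Hence $\mathrm{Obj}(W') <\mathrm{Obj}(W)$, contradicting optimality, so $\max(W) =d$, which combined with the alternation rule forces $d\in X$.

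A symmetric argument handles $0$: if $i_0 :=\min(W) >0$, replacing $i_0$ by $0$ yields $W'' :=(W\backslash\set{i_0})\cup\set{0}$ with termwise ratios $\nu (i -i_0)/\left((\nu -i_0) i\right) <1$ for $i\in W\backslash\set{i_0}$ (since $i <\nu$) and $\prod_{i\in W\backslash\set{i_0}}(i -i_0)/i <1$ for the modified summand, whence $\mathrm{Obj}(W'') <\mathrm{Obj}(W)$. The feasibility and basis properties of $W''$ follow by the same alternation argument, so $0\in Y\cup X$. The main obstacle is the initial reformulation of the objective in the symmetric form above; once this is in place, the rest reduces to elementary inequalities on termwise ratios.
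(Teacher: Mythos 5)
Your proposal is correct and follows essentially the same route as the paper: both rewrite the objective of a feasible basic solution in the symmetric form $\sum_{i\in Y\cup X}\prod_{a\in(Y\cup X)\backslash\set{i}}\frac{\nu -a}{\abs{i -a}}$ and then pivot the extremal index (max to $d$, min to $0$), using Proposition \ref{prop-Delta_PL-SRB} to keep feasibility. Your explicit termwise ratio bounds are just a spelled-out version of the paper's monotonicity claim for the objective as a function of the moved index, so there is nothing to add.
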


\begin{proof}
We denote by $v(Y, X)$ the objective value of the basic solution associated with $(Y, X)$. 
Since $(Y, X)$ is feasible, the coordinates of this solution are all non-negative.
So we deduce from \cref{eq-SB_xy} that $v(Y, X)$ can be expressed as:
$$\begin{array}{rl}
v(Y, X)
&=	 \sum_{i\in Y}\binom{\nu}{i}\times
	 	\abs{\prod_{a\in(Y\cup X)\setminus\set{i}}\frac{\nu -a}{i -a}/\binom{\nu}{i}}
	 +\sum_{i\in X}\binom{\nu}{i}\times
	 	\abs{\prod_{a\in(Y\cup X)\setminus\set{i}}\frac{\nu -a}{i -a}/\binom{\nu}{i}}	\\
&=	\sum_{i\in Y\cup X}\prod_{a\in(Y\cup X)\setminus\set{i}}
		\frac{\nu -a}{|i -a|}.
\end{array}$$

Let $i\in Y\cup X$. Equivalently, we can write:
\begin{align}\label{eq-vXYi}
v(Y, X) &\textstyle=\prod_{a\in(Y\cup X)\setminus\set{i}}\frac{\nu -a}{|i -a|}
			+\sum_{j\in(Y\cup X)\setminus\set{i}}
				\frac{\nu -i}{|j -i|} \times
				\prod_{a\in(Y\cup X)\setminus\set{i, j}}\frac{\nu -a}{|j -a|}.
\end{align}

We denote by $i_*$ and $i^*$ respectively the smallest and largest elements of $Y\cup X$, 
and by $i'_*$ and $i'^*$ respectively the smallest and largest elements of $(Y\cup X)\setminus\set{i_*,i^*}$.
Considering expression \cref{eq-vXYi} at $i =i_*$, we observe that $v(Y, X)$, as a function of $i_*$, is strictly increasing over $\set{0, 1 ,\ldots, i'_* -1}$. 
We symmetrically observe that $v(Y, X)$, as a function of $i^*$, is strictly decreasing over $\set{i'^* +1 ,\ldots, d -1, d}$. 
Now, we know from \cref{prop-Delta_PL-SRB} that the bases
%\begin{center}
	$\left(Y, X\setminus\set{i^*}\cup\set{d}\right)$, and 
	either $\left(Y, X\setminus\set{i_*}\cup\set{0}\right)$ or $\left(Y\setminus\set{i_*}\cup\set{0}, X\right)$
	(depending on $k\bmod{2}$)
%\end{center}
of $LP_{\nu, d, k}$ are feasible provided that $(Y, X)$ is. 
We conclude that $(Y, X)$ cannot be optimal unless $i^* =d$ and $i_* =0$.
\end{proof}

\Cref{thm-delta_opt} summarizes what we have learned about optimal CPAs, based on the results of \cref{prop-Delta-PL,%	PL ok
prop-Delta_PL-Base,% 	bases
prop-Delta_PL-SB,% 		expression solution (et donc val obj)
prop-Delta_PL-SRB,% 	bases réal
prop-Delta_PL-SB_opt}.%	bases opt

\begin{theorem}\label{thm-delta_opt}
Let $k >0$, $d\geq k$, $\nu >d$ be three integers, 
and $i =(i_0, i_1 ,\ldots, i_k)$ be a sequence of $k+1$ integers satisfying: 
\begin{align}\label{eq-delta-XY}
	0\leq i_0 < i_1 <\cdots< i_{k-1} <i_k \leq d.
\end{align}

Furthermore, let $R^*$ be a positive integer verifying:
\begin{align}\label{eq-delta-R*}
\textstyle
	R^*\times\prod_{s\in\set{0, 1 ,\ldots, k}
		\setminus\set{r}}\frac{\nu -i_s}{|i_r -i_s|}/\binom{\nu}{i_r}
	&\in\mathbb{N},	&r\in\set{0, 1 ,\ldots, k}.
\end{align}

Then there exists a regular $(\nu, d)$-CPA $(N,D)$ of strength $k$ with row weights $\set{i_0, i_1 ,\ldots, i_k, \nu}$. 
More precisely, the CPA contains:
\begin{itemize}
\item $R^*$ rows of all ones, which appear in $N$;
\item for each $r\in\set{0, 1 ,\ldots, k}$, 
	$R^*\times\prod_{s\in\set{0, 1 ,\ldots, k}
		\setminus\set{r}}\frac{\nu -i_s}{|i_r -i_s|}/\binom{\nu}{i_r}$ copies of each binary word of length $\nu$ and weight $i_r$, which appear in $D$ if $r\equiv k\pmod{2}$ and in $N$ otherwise. 
\end{itemize}
	
Furthermore, $\delta(\nu, d, k)$ is the maximum value, 
	over all sequences $i =(i_0, i_1 ,\ldots, i_k)$ of integers satisfying \cref{eq-delta-XY} with $i_0 =0$ and $i_k =d$,
attained by the following expression:
\begin{align}\label{eq-delta_opt}
\textstyle
	2/\left(1 +\sum_{r =0}^k \prod_{s =0}^{r -1}(\nu -i_s)/(i_r -i_s)
		\times\prod_{s =r +1}^k(\nu -i_s)/(i_s -i_r)\right).
\end{align}
\end{theorem}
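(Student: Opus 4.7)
The plan is to assemble Theorem~\ref{thm-delta_opt} directly from Propositions~\ref{prop-Delta-PL}--\ref{prop-Delta_PL-SB_opt}, which together already carry all the necessary information. By Proposition~\ref{prop-Delta-PL}, computing $\delta(\nu, d, k)$ reduces to computing $\mathrm{opt}(LP_{\nu, d, k})$; since $LP_{\nu, d, k}$ is bounded below by $0$ and admits feasible bases (Propositions~\ref{prop-Delta_PL-Base}--\ref{prop-Delta_PL-SRB} describe some), its optimum is attained at a basic feasible solution.

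First, I would combine the three structural propositions to parameterize the candidate optimal bases $(Y, X)$. Proposition~\ref{prop-Delta_PL-Base} gives $Y \cap X = \emptyset$ with $|Y \cup X| = k + 1$; Proposition~\ref{prop-Delta_PL-SRB} forces the elements of $Y \cup X$, listed in decreasing order, to alternate between $X$ and $Y$; Proposition~\ref{prop-Delta_PL-SB_opt} pins down the extremes as $d \in X$ and $0 \in Y \cup X$. Writing $Y \cup X = \set{i_0 < i_1 < \ldots < i_k}$ with $i_0 = 0$ and $i_k = d$ and anchoring the alternation at $i_k \in X$ forces the unambiguous rule $i_r \in X$ iff $r \equiv k \bmod{2}$. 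Setting $i_{k+1} := \nu$ recovers the indexing used in the statement; observe that $r = k+1 \not\equiv k \bmod{2}$ correctly places the all-ones row in $N$.

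Next, I would substitute the closed form of Proposition~\ref{prop-Delta_PL-SB} into the LP objective. The computation already carried out inside the proof of Proposition~\ref{prop-Delta_PL-SB_opt} collapses the value of the basic solution indexed by $(Y, X)$ to
\[
v(Y, X) \;=\; \sum_{r = 0}^{k} \prod_{s \in \set{0, \ldots, k} \setminus \set{r}} \frac{\nu - i_s}{|i_r - i_s|},
\]
and splitting each $|i_r - i_s|$ according to the sign of $s - r$ recovers exactly the denominator of~(\ref{eq-delta_opt}). Proposition~\ref{prop-Delta-PL} then identifies $\delta(\nu, d, k)$ with $\max 2/(1 + v(Y, X))$ taken over admissible sequences $(i_0, \ldots, i_{k+1})$, which is precisely formula~(\ref{eq-delta_opt}).

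Finally, for the realization part I would mirror the integer-lifting argument from the second half of the proof of Proposition~\ref{prop-Delta-PL}: scale the basic solution associated with $(Y, X)$ by a positive integer $R^*$ chosen so that every scaled coordinate lies in $\mathbb{N}$ --- condition~(\ref{eq-delta-R*}) is precisely what makes such an $R^*$ exist --- and invoke Proposition~\ref{prop-reg_xy} to turn this scaled vector into the representative of a regular $(\nu, d)$-CPA of strength $k$. The scaled multiplicities evaluate via Proposition~\ref{prop-Delta_PL-SB} to the expression quoted in the theorem, and the rows of weight $i_r$ sit in $D$ exactly when $i_r \in X$, i.e.\ when $r \equiv k \bmod{2}$. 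The only delicate point in the whole synthesis --- and hence the main, if still minor, obstacle --- is checking that the rank-based alternation of Proposition~\ref{prop-Delta_PL-SRB} combined with the forced inclusion $d \in X$ from Proposition~\ref{prop-Delta_PL-SB_opt} produces the exact parity rule stated, and not its opposite.
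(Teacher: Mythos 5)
Your proposal is correct and follows essentially the same route as the paper, which itself presents Theorem \ref{thm-delta_opt} as a direct assembly of Propositions \ref{prop-Delta-PL} through \ref{prop-Delta_PL-SB_opt} (LP reformulation, base characterization, closed-form basic solutions, feasibility via alternation, and the pinning of $0$ and $d$), followed by the same integer-scaling step via Proposition \ref{prop-reg_xy}. Your resolution of the parity rule (anchoring the alternation at $i_k = d \in X$, so $i_r \in X$ iff $r \equiv k \bmod 2$) is exactly the intended reading.
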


% ____________________________ Conséquences sur les nombres gamma
\section{Consequences for the numbers $\gamma(q, p, k)$}
\label{sec-opt}
We draw consequences from \cref{thm-reg_joker,thm-delta_opt}.

\begin{corollary}\label{cor-gamma=delta}
For all integers $k > 0$, $p\geq k$, and $q\geq p$, we have $\gamma(q, p, k) = \delta(q, p, k)$.
\end{corollary}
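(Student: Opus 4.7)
The plan is to establish two inequalities. The upper bound $\gamma(q, p, k) \leq \delta(q, p, k)$ is immediate from Proposition \ref{prop-Gamma_2_Delta}: starting from any $(Q,P) \in \Gamma(q,p,k)$ realizing $\gamma(q,p,k)$, the associated $(N,D) = (\pi_q(Q), \pi_q(P)) \in \Delta(q,p,k)$ has the same ratio $R^*/R$.

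For the reverse inequality, the case $p = q$ is trivial (take $Q = P$ equal to a single copy of the row $0\,1\,\ldots\,q-1$, giving $\gamma(q,q,k) = 1 = \delta(q,q,k)$). So assume $q > p$. The idea is to invoke Theorem \ref{thm-delta_opt} to get a regular CPA realizing $\delta(q,p,k)$ and then apply Theorem \ref{thm-reg_joker} to convert it to an ARPA, carefully checking that the integer $d'$ produced by the construction actually equals $p$ (rather than $p+1$ or $p+2$). The key observation is that the optimal CPAs provided by Theorem \ref{thm-delta_opt} are particularly sparse: they involve only words of $k+2$ prescribed weights $0 = i_0 < i_1 < \ldots < i_{k-1} < i_k = p < i_{k+1} = q$, split between $N$ and $D$ according to the parity of the index. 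In particular, $N$ and $D$ have no row in common (since they use disjoint weights), $(N,D)$ lies in $\Delta(q,p,k) \setminus \Delta(q,p-1,k)$ (because weight $p = i_k$ occurs in $D$), and the largest weight other than $q$ that occurs in $(N,D)$ is precisely $r = i_k = p$.

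It remains to check that weight $p - 1$ does not occur in $D$ of such an optimal CPA. In $D$, only weights $i_r$ with $r \equiv k \pmod 2$ appear. If some such $i_r$ equaled $p - 1$ with $r < k$, the chain $i_r < i_{r+1} < \ldots < i_k = p$ would force $i_r \leq p - (k - r)$, hence $k - r \leq 1$, i.e.\ $r \geq k - 1$; but the parity constraint and $r < k$ give $r \leq k - 2$, a contradiction. Hence $d' = p$ according to the third case of Theorem \ref{thm-reg_joker}, and the theorem produces an ARPA in $\Gamma(q, p, k)$ interpretable as $(N, D)$. By Definition \ref{def-G_D}, the all-ones row of $N$ corresponds to the row $0\,1\,\ldots\,q-1$ in $Q$, and the total number of rows is preserved, so the resulting ARPA realizes the ratio $\delta(q, p, k)$, yielding $\gamma(q, p, k) \geq \delta(q, p, k)$.

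The main (already discharged) obstacle is ensuring $d' = p$: this is precisely why Theorem \ref{thm-delta_opt} was established in the particular sparse form it takes. The rest of the argument is a bookkeeping verification of the hypotheses of Theorem \ref{thm-reg_joker}.
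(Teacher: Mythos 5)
Your proposal is correct and follows essentially the same route as the paper: the inequality $\gamma(q,p,k)\leq\delta(q,p,k)$ via Proposition \ref{prop-Gamma_2_Delta}, and the converse by feeding an optimal regular CPA from Theorem \ref{thm-delta_opt} into Theorem \ref{thm-reg_joker}. Your explicit verification that the weights occurring in $D$ are either $p$ or at most $p-2$ (so that $d'=p$ in the third case of Theorem \ref{thm-reg_joker}) is exactly the step the paper states without detail, and your parity argument for it is sound.
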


\begin{proof}
The case $p = q$ is obvious, since $\gamma(q, q, k) = \delta(q, q, k) = 1$.
Thus assume $q > p$. 
We know from \cref{prop-Gamma_2_Delta} that $\delta(q, p, k)\geq \gamma(q, p, k)$.
Conversely, according to \cref{thm-delta_opt}, there exists a regular CPA $(N, D)$ that realizes $\delta(q, p, k)$, with the rows of $D$ having weight either $p$ or at most $p-2$, and the rows of $N$ having weight either $q$ or less than $p$. \Cref{thm-reg_joker} then allows us to derive from $(N, D)$ a $(q, p)$-ARPA $(Q, P)$ of strength $k$ such that $R^*(Q, P)/R(Q, P) = R^*(N, D)/R(N, D) = \delta(q, p, k)$. 
So $\gamma(q, p, k)\geq\delta(q, p, k)$, completing the proof.
\end{proof}

As a consequence of \cref{prop-Delta-PL,cor-gamma=delta}, for all integers $k > 0$, $p\geq k$, and $q > p$, the number $\gamma(q, p, k)$ can be efficiently computed by solving the linear program $LP_{q, p, k}$ with $q+p+1$ continuous variables and $k+1$ constraints. 
Next, we use \cref{thm-delta_opt} to obtain the expression for $\delta(q, p, k)$---and hence $\gamma(q, p, k)$---in the cases where $p = k$ or $k\in\{1, 2\}$. 
In addition, we show ARPAs achieving $\gamma(q, p, k)$ by applying \cref{alg-D2G} to the corresponding CPAs, thus illustrating \cref{thm-reg_joker}. 

\begin{corollary}\label{cor-delta-p=k+k<=2}
Let $k > 0$, $p\geq k$, $q > p$ be three integers such that $p = k$ or $k\in\set{1, 2}$. Then $\delta(q, p, k)$---and thus, $\gamma(q, p, k)$---is equal to
\begin{align}
\label{eq-gamma-qkk}\textstyle
	2/(1+\sum_{i = 0}^k \binom{q}{i}\binom{q-i-1}{k-i})	&\text{ if $p = k$},\\
\label{eq-gamma-qp1}
	p/q														&\text{ if $k = 1$},\\
\label{eq-gamma-qp2}
	\lceil p/2\rceil \lfloor p/2\rfloor/
	\left(\left(q-\lceil p/2\rceil\right)\left(q-\lfloor p/2\rfloor\right)\right)
															&\text{ if $k = 2$}.
\end{align}
\end{corollary}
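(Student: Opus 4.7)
The plan is to combine Corollary \ref{cor-gamma=delta} with the characterization of $\delta(q, p, k)$ provided by Theorem \ref{thm-delta_opt}, reducing each case to an explicit finite optimization over the free coordinates $(i_1, \ldots, i_{k-1})$ of the sequence. Since Corollary \ref{cor-gamma=delta} gives $\gamma(q,p,k) = \delta(q,p,k)$, it suffices to evaluate $\delta$. Theorem \ref{thm-delta_opt} tells us that $\delta(q, p, k) = 2/(1 + T^*)$, where $T^*$ is the minimum over admissible integer sequences $0 = i_0 < i_1 < \cdots < i_{k-1} < i_k = p < i_{k+1} = q$ of
$$T(i_1, \ldots, i_{k-1}) \;=\; \sum_{r=0}^{k} \prod_{s=0}^{r-1}\frac{q - i_s}{i_r - i_s}\, \prod_{s=r+1}^{k}\frac{q - i_s}{i_s - i_r}.$$
The existence of a corresponding integer CPA follows by scaling the basic solution to clear denominators (as in the proof of Proposition \ref{prop-Delta-PL}), so the whole task reduces to evaluating or minimizing $T$.

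For $p = k$, the admissibility constraints leave no freedom: the only possible sequence is $i_r = r$ for every $r \in \{0, \ldots, k\}$. I would substitute this sequence into $T$ and, using the factorial identities $\prod_{s=0}^{r-1}(r-s) = r!$, $\prod_{s=r+1}^{k}(s-r) = (k-r)!$, $\prod_{s=0}^{r-1}(q-s) = q!/(q-r)!$, and $\prod_{s=r+1}^{k}(q-s) = (q-r-1)!/(q-k-1)!$, verify that each term collapses to $\binom{q}{r}\binom{q-r-1}{k-r}$, which yields formula (\ref{eq-gamma-qkk}).

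For $k = 1$, the unique admissible sequence is $(0, p, q)$; direct substitution into the two-term sum gives $T = (q-p)/p + q/p = (2q - p)/p$, hence $\delta = 2/(1+T) = p/q$, which is (\ref{eq-gamma-qp1}).

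For $k = 2$, let $a = i_1 \in \{1, \ldots, p-1\}$ be the single free variable. I would place the three terms of $T(a)$ over the common denominator $a p (p-a)$; after expansion and cancellation the numerator factors as $p\bigl[2q(q-p) + a(p-a)\bigr]$, giving the clean identity
$$T(a) \;=\; 1 + \frac{2q(q-p)}{a(p-a)}, \qquad\text{so}\qquad \delta(q, p, 2) \;=\; \frac{a(p-a)}{a(p-a) + q(q-p)}.$$
Since this expression is strictly increasing in $a(p-a)$, and the latter is maximized over integers at $a \in \{\lfloor p/2 \rfloor, \lceil p/2 \rceil\}$ with value $\lfloor p/2\rfloor \lceil p/2\rceil$, applying the identity $q(q-p) + \lfloor p/2\rfloor \lceil p/2\rceil = (q - \lfloor p/2\rfloor)(q - \lceil p/2\rceil)$ (valid because $\lfloor p/2\rfloor + \lceil p/2\rceil = p$) delivers (\ref{eq-gamma-qp2}). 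The main obstacle is spotting the numerator factorization in the $k = 2$ case; once that is in hand, the optimization and closing identity are mechanical, and the $p = k$ and $k = 1$ cases require only bookkeeping with binomial coefficients.
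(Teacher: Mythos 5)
Your proposal is correct and follows essentially the same route as the paper: invoke Corollary \ref{cor-gamma=delta} and Theorem \ref{thm-delta_opt}, note that the sequence $(i_0,\ldots,i_{k+1})$ is forced when $p=k$ or $k=1$, and for $k=2$ optimize over the single free index $i_1$, where your closed form $\delta=a(p-a)/\bigl(a(p-a)+q(q-p)\bigr)$ agrees with the paper's $i_1(p-i_1)/\bigl((q-i_1)(q-p+i_1)\bigr)$ via the identity $(q-a)(q-p+a)=q(q-p)+a(p-a)$. The binomial collapse in the $p=k$ case and the maximization at $a\in\set{\lfloor p/2\rfloor,\lceil p/2\rceil}$ likewise match the paper's computation.
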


\begin{proof}
According to \cref{thm-delta_opt}, $\delta(q, p, k)$ coincides with the maximum value taken by the expression \cref{eq-delta_opt} over all sequences $i = (i_0, i_1 ,\ldots, i_k)$ of $k+1$ integers satisfying \cref{eq-delta-XY}, i.e.:
$$\begin{array}{ll}
	i_0 =  0 < i_1 <\cdots< i_{k-1} < i_k = p.
\end{array}$$
 
When $p = k$, $i_0, i_1 ,\ldots, i_k$ are necessarily $0, 1 ,\ldots, k$. In this case, expression \cref{eq-delta_opt} evaluates to:
$$\begin{array}{rll}
2/\left(
	1+\sum_{r = 0}^k 
		\prod_{s = 0}^{r-1}\frac{q-s}{r-s} \times \prod_{s = r+1}^k\frac{q-s}{s-r}
\right)&=2/\left(
	1+\sum_{r = 0}^k \binom{q}{r}\times\binom{q-r-1}{k-r}
\right)
\end{array}.$$ 
(Alternatively, in this case, we can consider \cref{thm-gamma_qpk,thm-delta_d=k-UB,prop-Gamma_2_Delta}.)

If $k = 1$, then $i_0 = 0$, $i_1 = p$, and expression $\cref{eq-delta_opt}$ evaluates to: 
$$\begin{array}{rll}
	2/\left(1+\frac{q-p}{p-0}+\frac{q-0}{p-0}\right) 
	&=2/(1+2q/p-p/p)
	&=p/q
\end{array}.$$ 

When $k = 2$, considering $i_0 = 0 <i_1 <i_2 = p$, $\delta(q, p, 2)$ is the maximum over all $i_1\in[p-1]$ of the following expression:
$$\begin{array}{rll}
	2/\left(
		1 	+\frac{(q-i_1)(q-p)}{(i_1-0)(p-0)} 
			+\frac{(q-0)(q-p)}{(i_1-0)(p-i_1)}
			+\frac{(q-0)(q-i_1)}{(p-0)(p-i_1)}
	\right)	&=
	2/\left(
		1 	+\frac{(q-i_1)(q-p)}{i_1p} 
			+\frac{q(q-p)}{i_1(p-i_1)}
			+\frac{q(q-i_1)}{p(p-i_1)}
	\right)
\end{array}.$$ 
This expression simplifies to $i_1(p-i_1)/\left((q-i_1)(q-p+i_1)\right)$, which is maximized for $i_1\in\set{\lfloor p/2\rfloor, \lceil p/2\rceil}$. 
\end{proof}

%%%%%%%%%%%%% p = k
\begin{figure}[t]{\footnotesize
\caption{ARPAs achieving $\gamma(6, 2, 2)$ and $\gamma(5, 3, 3)$. 
These ARPAs are obtained by applying Algorithm \ref{alg-D2G} to the optimal regular CPAs constructed using Theorem \ref{thm-delta_opt}. 
Gray and black indicate entries equal to 1 and 0, respectively, in the original CPAs.}
\label{fig-gamma_q_k_k}
\begin{center}
\setlength\arraycolsep{3pt}
$\begin{array}{c|c}
\begin{array}{c}
	\gamma(6, 2, 2) =1/25\\[4pt]
	\begin{array}{ccc}\setlength\arraycolsep{1.5pt}
		\begin{array}{cccccc}
			Q^0			&Q^1	 	&Q^2		&Q^3		&Q^4		&Q^5		\\\hline
			\equj{0}	&\equj{1}	&\equj{2}	&\equj{3}	&\equj{4}	&\equj{5}	\\\hline
			\equj{0}	&\difj{0}	&\difj{0}	&\difj{0}	&\difj{0}	&\difj{0}	\\
			\equj{0}	&\difj{0}	&\difj{0}	&\difj{0}	&\difj{0}	&\difj{0}	\\
			\equj{0}	&\difj{0}	&\difj{0}	&\difj{0}	&\difj{0}	&\difj{0}	\\
			\equj{0}	&\difj{0}	&\difj{0}	&\difj{0}	&\difj{0}	&\difj{0}	\\
			\difj{1}	&\equj{1}	&\difj{0}	&\difj{0}	&\difj{0}	&\difj{0}	\\
			\difj{1}	&\equj{1}	&\difj{1}	&\difj{1}	&\difj{1}	&\difj{1}	\\
			\difj{1}	&\equj{1}	&\difj{1}	&\difj{1}	&\difj{1}	&\difj{1}	\\
			\difj{1}	&\equj{1}	&\difj{1}	&\difj{1}	&\difj{1}	&\difj{1}	\\
			\difj{1}	&\difj{0}	&\equj{2}	&\difj{0}	&\difj{0}	&\difj{0}	\\
			\difj{2}	&\difj{2}	&\equj{2}	&\difj{1}	&\difj{1}	&\difj{1}	\\
			\difj{2}	&\difj{2}	&\equj{2}	&\difj{2}	&\difj{2}	&\difj{2}	\\
			\difj{2}	&\difj{2}	&\equj{2}	&\difj{2}	&\difj{2}	&\difj{2}	\\
			\difj{1}	&\difj{0}	&\difj{0}	&\equj{3}	&\difj{0}	&\difj{0}	\\
			\difj{2}	&\difj{2}	&\difj{1}	&\equj{3}	&\difj{1}	&\difj{1}	\\
			\difj{3}	&\difj{3}	&\difj{3}	&\equj{3}	&\difj{2}	&\difj{2}	\\
			\difj{3}	&\difj{3}	&\difj{3}	&\equj{3}	&\difj{3}	&\difj{3}	\\
			\difj{1}	&\difj{0}	&\difj{0}	&\difj{0}	&\equj{4}	&\difj{0}	\\
			\difj{2}	&\difj{2}	&\difj{1}	&\difj{1}	&\equj{4}	&\difj{1}	\\
			\difj{3}	&\difj{3}	&\difj{3}	&\difj{2}	&\equj{4}	&\difj{2}	\\
			\difj{4}	&\difj{4}	&\difj{4}	&\difj{4}	&\equj{4}	&\difj{3}	\\
			\difj{1}	&\difj{0}	&\difj{0}	&\difj{0}	&\difj{0}	&\equj{5}	\\
			\difj{2}	&\difj{2}	&\difj{1}	&\difj{1}	&\difj{1}	&\equj{5}	\\
			\difj{3}	&\difj{3}	&\difj{3}	&\difj{2}	&\difj{2}	&\equj{5}	\\
			\difj{4}	&\difj{4}	&\difj{4}	&\difj{4}	&\difj{3}	&\equj{5}	\\
		\end{array}&&\setlength\arraycolsep{1.5pt}\begin{array}{cccccc}
			P^0			&P^1		&P^2		&P^3		&P^4		&P^5		\\\hline
			\equj{0}	&\equj{1}	&\difj{0}	&\difj{0}	&\difj{0}	&\difj{0}	\\
			\equj{0}	&\difj{0}	&\equj{2}	&\difj{0}	&\difj{0}	&\difj{0}	\\
			\equj{0}	&\difj{0}	&\difj{0}	&\equj{3}	&\difj{0}	&\difj{0}	\\
			\equj{0}	&\difj{0}	&\difj{0}	&\difj{0}	&\equj{4}	&\difj{0}	\\
			\equj{0}	&\difj{0}	&\difj{0}	&\difj{0}	&\difj{0}	&\equj{5}	\\
			\difj{1}	&\equj{1}	&\equj{2}	&\difj{1}	&\difj{1}	&\difj{1}	\\
			\difj{1}	&\equj{1}	&\difj{1}	&\equj{3}	&\difj{1}	&\difj{1}	\\
			\difj{1}	&\equj{1}	&\difj{1}	&\difj{1}	&\equj{4}	&\difj{1}	\\
			\difj{1}	&\equj{1}	&\difj{1}	&\difj{1}	&\difj{1}	&\equj{5}	\\
			\difj{2}	&\difj{2}	&\equj{2}	&\equj{3}	&\difj{2}	&\difj{2}	\\
			\difj{2}	&\difj{2}	&\equj{2}	&\difj{2}	&\equj{4}	&\difj{2}	\\
			\difj{2}	&\difj{2}	&\equj{2}	&\difj{2}	&\difj{2}	&\equj{5}	\\
			\difj{3}	&\difj{3}	&\difj{3}	&\equj{3}	&\equj{4}	&\difj{3}	\\
			\difj{3}	&\difj{3}	&\difj{3}	&\equj{3}	&\difj{3}	&\equj{5}	\\
			\difj{4}	&\difj{4}	&\difj{4}	&\difj{4}	&\equj{4}	&\equj{5}	\\\hline
			\difj{1}	&\difj{0}	&\difj{0}	&\difj{0}	&\difj{0}	&\difj{0}	\\
			\difj{1}	&\difj{0}	&\difj{0}	&\difj{0}	&\difj{0}	&\difj{0}	\\
			\difj{1}	&\difj{0}	&\difj{0}	&\difj{0}	&\difj{0}	&\difj{0}	\\
			\difj{1}	&\difj{0}	&\difj{0}	&\difj{0}	&\difj{0}	&\difj{0}	\\
			\difj{2}	&\difj{2}	&\difj{1}	&\difj{1}	&\difj{1}	&\difj{1}	\\
			\difj{2}	&\difj{2}	&\difj{1}	&\difj{1}	&\difj{1}	&\difj{1}	\\
			\difj{2}	&\difj{2}	&\difj{1}	&\difj{1}	&\difj{1}	&\difj{1}	\\
			\difj{3}	&\difj{3}	&\difj{3}	&\difj{2}	&\difj{2}	&\difj{2}	\\
			\difj{3}	&\difj{3}	&\difj{3}	&\difj{2}	&\difj{2}	&\difj{2}	\\
			\difj{4}	&\difj{4}	&\difj{4}	&\difj{4}	&\difj{3}	&\difj{3}	\\
		\end{array}
	\end{array}
\end{array}&\begin{array}{c} 
	\gamma(5, 3, 3) =1/25\\[4pt]
	\begin{array}{ccc}\setlength\arraycolsep{1.5pt}
		\begin{array}{ccccc}
			Q^0			&Q^1	 	&Q^2		&Q^3		&Q^4		\\\hline
			\equj{0}	&\equj{1}	&\equj{2}	&\equj{3}	&\equj{4}	\\\hline			
			\equj{0}	&\equj{1}	&\difj{0}	&\difj{0}	&\difj{0}	\\
			\equj{0}	&\equj{1}	&\difj{0}	&\difj{0}	&\difj{0}	\\
			\equj{0}	&\difj{0}	&\equj{2}	&\difj{0}	&\difj{0}	\\
			\equj{0}	&\difj{0}	&\equj{2}	&\difj{0}	&\difj{0}	\\
			\equj{0}	&\difj{0}	&\difj{0}	&\equj{3}	&\difj{0}	\\
			\equj{0}	&\difj{0}	&\difj{0}	&\equj{3}	&\difj{0}	\\
			\equj{0}	&\difj{0}	&\difj{0}	&\difj{0}	&\equj{4}	\\
			\equj{0}	&\difj{0}	&\difj{0}	&\difj{0}	&\equj{4}	\\
			\difj{1}	&\equj{1}	&\equj{2}	&\difj{0}	&\difj{0}	\\
			\difj{1}	&\equj{1}	&\equj{2}	&\difj{1}	&\difj{1}	\\
			\difj{1}	&\equj{1}	&\difj{0}	&\equj{3}	&\difj{0}	\\
			\difj{1}	&\equj{1}	&\difj{1}	&\equj{3}	&\difj{1}	\\
			\difj{1}	&\equj{1}	&\difj{0}	&\difj{0}	&\equj{4}	\\
			\difj{1}	&\equj{1}	&\difj{1}	&\difj{1}	&\equj{4}	\\
			\difj{1}	&\difj{0}	&\equj{2}	&\equj{3}	&\difj{0}	\\
			\difj{2}	&\difj{2}	&\equj{2}	&\equj{3}	&\difj{1}	\\
			\difj{1}	&\difj{0}	&\equj{2}	&\difj{0}	&\equj{4}	\\
			\difj{2}	&\difj{2}	&\equj{2}	&\difj{1}	&\equj{4}	\\
			\difj{1}	&\difj{0}	&\difj{0}	&\equj{3}	&\equj{4}	\\
			\difj{2}	&\difj{2}	&\difj{1}	&\equj{3}	&\equj{4}	\\\hline
			\difj{1}	&\difj{0}	&\difj{0}	&\difj{0}	&\difj{0}	\\
			\difj{1}	&\difj{0}	&\difj{0}	&\difj{0}	&\difj{0}	\\
			\difj{1}	&\difj{0}	&\difj{0}	&\difj{0}	&\difj{0}	\\
			\difj{2}	&\difj{2}	&\difj{1}	&\difj{1}	&\difj{1}	\\
		\end{array}&&\setlength\arraycolsep{1.5pt}\begin{array}{ccccc}
			P^0			&P^1		&P^2		&P^3		&P^4		\\\hline
			\equj{0}	&\equj{1}	&\equj{2}	&\difj{0}	&\difj{0}	\\
			\equj{0}	&\equj{1}	&\difj{0}	&\equj{3}	&\difj{0}	\\
			\equj{0}	&\equj{1}	&\difj{0}	&\difj{0}	&\equj{4}	\\
			\equj{0}	&\difj{0}	&\equj{2}	&\equj{3}	&\difj{0}	\\
			\equj{0}	&\difj{0}	&\equj{2}	&\difj{0}	&\equj{4}	\\
			\equj{0}	&\difj{0}	&\difj{0}	&\equj{3}	&\equj{4}	\\
			\difj{1}	&\equj{1}	&\equj{2}	&\equj{3}	&\difj{1}	\\
			\difj{1}	&\equj{1}	&\equj{2}	&\difj{1}	&\equj{4}	\\
			\difj{1}	&\equj{1}	&\difj{1}	&\equj{3}	&\equj{4}	\\
			\difj{2}	&\difj{2}	&\equj{2}	&\equj{3}	&\equj{4}	\\\hline
			\equj{0}	&\difj{0}	&\difj{0}	&\difj{0}	&\difj{0}	\\
			\equj{0}	&\difj{0}	&\difj{0}	&\difj{0}	&\difj{0}	\\
			\equj{0}	&\difj{0}	&\difj{0}	&\difj{0}	&\difj{0}	\\
			\difj{1}	&\equj{1}	&\difj{0}	&\difj{0}	&\difj{0}	\\
			\difj{1}	&\equj{1}	&\difj{0}	&\difj{0}	&\difj{0}	\\
			\difj{1}	&\equj{1}	&\difj{1}	&\difj{1}	&\difj{1}	\\
			\difj{1}	&\difj{0}	&\equj{2}	&\difj{0}	&\difj{0}	\\
			\difj{1}	&\difj{0}	&\equj{2}	&\difj{0}	&\difj{0}	\\
			\difj{2}	&\difj{2}	&\equj{2}	&\difj{1}	&\difj{1}	\\
			\difj{1}	&\difj{0}	&\difj{0}	&\equj{3}	&\difj{0}	\\
			\difj{1}	&\difj{0}	&\difj{0}	&\equj{3}	&\difj{0}	\\
			\difj{2}	&\difj{2}	&\difj{1}	&\equj{3}	&\difj{1}	\\
			\difj{1}	&\difj{0}	&\difj{0}	&\difj{0}	&\equj{4}	\\
			\difj{1}	&\difj{0}	&\difj{0}	&\difj{0}	&\equj{4}	\\
			\difj{2}	&\difj{2}	&\difj{1}	&\difj{1}	&\equj{4}	\\
		\end{array}
	\end{array}
\end{array}
\end{array}$
\end{center}}
\end{figure}

We make the ARPAs obtained for the cases $p = k$, $k = 1$, and $k = 2$ explicit. 
\Cref{fig-gamma_q_k_k} shows the ARPAs derived from \cref{thm-delta_opt,thm-reg_joker} when $(q, k)\in\set{(6, 2), (5, 3)}$ and $p = k$. 
These ARPAs are equivalent to those given by the recursive construction proposed in \cite{CT18}, in that they can be interpreted as the same CPAs. 
In these CPAs, the word of weight $q$ occurs exactly once in $N$, and the words of weight $r\in\set{0, 1 ,\ldots, k}$ occur $\binom{q-r-1}{k-1}$ times each, in $D$ if $r\equiv k\bmod{2}$, and in $N$ otherwise. No other words occur in either $D$ or $N$. In particular, when $q = k+1$, the binomial coefficients $\binom{q-r-1}{k-1}$ are all equal to one. Thus, when $q = k+1$, the resulting arrays are simple, with $2^{k-1}$ rows each, including a single row of the form $1\ 1\ \cdots\ 1$.

%%%%%%%%%%%%% k = 1
\begin{figure}[t]{\footnotesize
\caption{ARPAs achieving $\gamma(4, 2, 1)$, $\gamma(5, 3, 1)$ and $\gamma(5, 4, 1)$. These ARPAs are obtained by applying Algorithm \ref{alg-D2G} to the optimal regular CPAs constructed using Theorem \ref{thm-delta_opt}. 
Gray and black indicate entries equal to 1 and 0, respectively, in the original CPAs.}
\label{fig-gamma_q_p_1}
\begin{center}
\setlength\arraycolsep{3pt}
$\begin{array}{c|c|c}
\begin{array}{c}
	\gamma(4, 2, 1) =3/6 =1/2\\[4pt]
	\begin{array}{ccc}\setlength\arraycolsep{1.5pt}
		\begin{array}{cccc}
			Q^0			&Q^1	 	&Q^2		&Q^3		\\\hline
			\equj{0}	&\equj{1}	&\equj{2}	&\equj{3}	\\
			\equj{0}	&\equj{1}	&\equj{2}	&\equj{3}	\\
			\equj{0}	&\equj{1}	&\equj{2}	&\equj{3}	\\\hline
			\difj{1}	&\difj{0}	&\difj{0}	&\difj{0}	\\
			\difj{1}	&\difj{0}	&\difj{0}	&\difj{0}	\\
			\difj{2}	&\difj{2}	&\difj{1}	&\difj{1}	\\
		\end{array}&&\setlength\arraycolsep{1.5pt}\begin{array}{cccc}
			P^0			&P^1		&P^2		&P^3		\\\hline
			\equj{0}	&\equj{1}	&\difj{0}	&\difj{0}	\\
			\equj{0}	&\difj{0}	&\equj{2}	&\difj{0}	\\
			\equj{0}	&\difj{0}	&\difj{0}	&\equj{3}	\\
			\difj{1}	&\equj{1}	&\equj{2}	&\difj{1}	\\
			\difj{1}	&\equj{1}	&\difj{1}	&\equj{3}	\\
			\difj{2}	&\difj{2}	&\equj{2}	&\equj{3}	\\
		\end{array}
	\end{array}
\end{array}&\begin{array}{c} 
	\gamma(5, 3, 1) =6/10 =3/5\\[4pt]
	\begin{array}{ccc}\setlength\arraycolsep{1.5pt}
		\begin{array}{ccccc}
			Q^0			&Q^1		&Q^2	 	&Q^3		&Q^4		\\\hline
			\equj{0}	&\equj{1}	&\equj{2}	&\equj{3}	&\equj{4}	\\
			\equj{0}	&\equj{1}	&\equj{2}	&\equj{3}	&\equj{4}	\\
			\equj{0}	&\equj{1}	&\equj{2}	&\equj{3}	&\equj{4}	\\
			\equj{0}	&\equj{1}	&\equj{2}	&\equj{3}	&\equj{4}	\\
			\equj{0}	&\equj{1}	&\equj{2}	&\equj{3}	&\equj{4}	\\
			\equj{0}	&\equj{1}	&\equj{2}	&\equj{3}	&\equj{4}	\\\hline
			\difj{1}	&\difj{0}	&\difj{0}	&\difj{0}	&\difj{0}	\\
			\difj{1}	&\difj{0}	&\difj{0}	&\difj{0}	&\difj{0}	\\
			\difj{1}	&\difj{0}	&\difj{0}	&\difj{0}	&\difj{0}	\\
			\difj{2}	&\difj{2}	&\difj{1}	&\difj{1}	&\difj{1}	\\
		\end{array}&&\setlength\arraycolsep{1.5pt}\begin{array}{ccccc}
			P^0			&P^1		&P^2	 	&P^3		&P^4		\\\hline
			\equj{0}	&\equj{1}	&\equj{2}	&\difj{0}	&\difj{0}	\\
			\equj{0}	&\equj{1}	&\difj{0}	&\equj{3}	&\difj{0}	\\
			\equj{0}	&\equj{1}	&\difj{0}	&\difj{0}	&\equj{4}	\\
			\equj{0}	&\difj{0}	&\equj{2}	&\equj{3}	&\difj{0}	\\
			\equj{0}	&\difj{0}	&\equj{2}	&\difj{0}	&\equj{4}	\\
			\equj{0}	&\difj{0}	&\difj{0}	&\equj{3}	&\equj{4}	\\
			\difj{1}	&\equj{1}	&\equj{2}	&\equj{3}	&\difj{1}	\\
			\difj{1}	&\equj{1}	&\equj{2}	&\difj{1}	&\equj{4}	\\
			\difj{1}	&\equj{1}	&\difj{1}	&\equj{3}	&\equj{4}	\\
			\difj{2}	&\difj{2}	&\equj{2}	&\equj{3}	&\equj{4}	\\
		\end{array}
	\end{array}
\end{array}&\begin{array}{c} 
	\gamma(5, 4, 1) =4/5\\[4pt]
	\begin{array}{ccc}\setlength\arraycolsep{1.5pt}
		\begin{array}{ccccc}
			Q^0			&Q^1		&Q^2	 	&Q^3		&Q^4		\\\hline
			\equj{0}	&\equj{1}	&\equj{2}	&\equj{3}	&\equj{4}	\\
			\equj{0}	&\equj{1}	&\equj{2}	&\equj{3}	&\equj{4}	\\
			\equj{0}	&\equj{1}	&\equj{2}	&\equj{3}	&\equj{4}	\\
			\equj{0}	&\equj{1}	&\equj{2}	&\equj{3}	&\equj{4}	\\\hline
			\difj{1}	&\difj{0}	&\difj{0}	&\difj{0}	&\difj{0}	\\
		\end{array}&&\setlength\arraycolsep{1.5pt}\begin{array}{ccccc}
			P^0			&P^1		&P^2	 	&P^3		&P^4		\\\hline
			\equj{0}	&\equj{1}	&\equj{2}	&\equj{3}	&\difj{0}	\\
			\equj{0}	&\equj{1}	&\equj{2}	&\difj{0}	&\equj{4}	\\
			\equj{0}	&\equj{1}	&\difj{0}	&\equj{3}	&\equj{4}	\\
			\equj{0}	&\difj{0}	&\equj{2}	&\equj{3}	&\equj{4}	\\
			\difj{1}	&\equj{1}	&\equj{2}	&\equj{3}	&\equj{4}	\\
		\end{array}
	\end{array}
\end{array}
\end{array}$
\end{center}}
\end{figure}

When $k = 1$, we consider the basic solution $(y^*, x^*)$ of $LP_{q, p, 1}$ with non-zero coordinates 
$$\left\{\begin{array}{rll}
	y^*_0	&=1/\binom{q}{0}\times(q-p)/(p-0)		&=q/p-1,\\
	x^*_p	&=1/\binom{q}{p}\times(q-0)/(p-0)		&=1/\binom{q-1}{p-1}.\\
\end{array}\right.$$ 
To derive from this solution of $LP_{q, p, 1}$ a CPA with the required characteristics, for $R^* = \binom{q-1}{p-1}$, we define the solution $(y, x) = (R^* y^*, R^*, R^* x^*)$ of $P_{q, p, 1}$. 
This vector has non-zero coordinates: 
$$\begin{array}{ccc}
	y_0	=(q-p)/p\times\binom{q-1}{p-1} = \binom{q-1}{p},
	&x_p	=1/\binom{q-1}{p-1}\times\binom{q-1}{p-1} = 1,
	&y_q	=\binom{q-1}{p-1}.
\end{array}$$ 
Since these coordinates are integers, $(y, x)$ is the representative vector of a regular $(q, p)$-CPA $(N, D)$ of strength 1 such that $R^*(N, D)/R(N, D) = p/q = \delta(q, p, 1)$. 
We can then use the construction underlying \cref{thm-reg_joker} to transform this CPA into a $(q, p)$-ARPA $(Q, P)$ of strength 1 such that $R^*(Q, P)/R(Q, P) = R^*(N, D)/R(N, D) = \delta(q, p, 1)$. 
\Cref{fig-gamma_q_p_1} shows the ARPAs obtained when $(q, p)\in\set{(4, 2), (5, 3), (5, 4)}$. 

%%%%%%%%%%%%% k = 2
\begin{figure}[t]{\footnotesize
\caption{ARPAs achieving $\gamma(4, 3, 2)$, $\gamma(5, 3, 2)$ and $\gamma(5, 4, 2)$. These ARPAs are obtained by applying Algorithm \ref{alg-D2G} to optimal regular CPAs constructed using Theorem \ref{thm-delta_opt}. 
Gray and black indicate entries equal to 1 and 0, respectively, in the original CPAs.}
\label{fig-gamma_q_p_2}
\begin{center}
\setlength\arraycolsep{3pt}
$\begin{array}{c|c|c}
\begin{array}{c} 
	% ____ $\gamma(4, 3, 2) =2/6 =1/3$
	\gamma(4, 3, 2) =2/6 =1/3\\[4pt]
	\begin{array}{ccc}\setlength\arraycolsep{1.5pt}
		\begin{array}{cccc}
			Q^0			&Q^1	 	&Q^2		&Q^3		\\\hline
			\equj{0}	&\equj{1}	&\equj{2}	&\equj{3}	\\
			\equj{0}	&\equj{1}	&\equj{2}	&\equj{3}	\\\hline
			\equj{0}	&\difj{0}	&\difj{0}	&\difj{0}	\\
			\difj{1}	&\equj{1}	&\difj{0}	&\difj{0}	\\
			\difj{1}	&\difj{0}	&\equj{2}	&\difj{0}	\\
			\difj{1}	&\difj{0}	&\difj{0}	&\equj{3}	\\
		\end{array}&&\setlength\arraycolsep{1.5pt}\begin{array}{cccc}
			P^0			&P^1		&P^2		&P^3		\\\hline
			\equj{0}	&\equj{1}	&\equj{2}	&\difj{0}	\\
			\equj{0}	&\equj{1}	&\difj{0}	&\equj{3}	\\
			\equj{0}	&\difj{0}	&\equj{2}	&\equj{3}	\\
			\difj{1}	&\equj{1}	&\equj{2}	&\equj{3}	\\\hline
			\difj{1}	&\difj{0}	&\difj{0}	&\difj{0}	\\
			\difj{1}	&\difj{0}	&\difj{0}	&\difj{0}	\\
		\end{array}
	\end{array}
\end{array}&\begin{array}{c}
	% ____ $\gamma(5, 3, 2) = 1/N(5, 3, 2)$
	\gamma(5, 3, 2) =3/18 =1/6\\[4pt]
	\begin{array}{ccc}\setlength\arraycolsep{1.5pt}
		\begin{array}{ccccc}
			Q^0			&Q^1		&Q^2	 	&Q^3		&Q^4		\\\hline
			\equj{0}	&\equj{1}	&\equj{2}	&\equj{3}	&\equj{4}	\\
			\equj{0}	&\equj{1}	&\equj{2}	&\equj{3}	&\equj{4}	\\
			\equj{0}	&\equj{1}	&\equj{2}	&\equj{3}	&\equj{4}	\\\hline
			\equj{0}	&\difj{0}	&\difj{0}	&\difj{0}	&\difj{0}	\\
			\equj{0}	&\difj{0}	&\difj{0}	&\difj{0}	&\difj{0}	\\
			\equj{0}	&\difj{0}	&\difj{0}	&\difj{0}	&\difj{0}	\\
			\difj{1}	&\equj{1}	&\difj{0}	&\difj{0}	&\difj{0}	\\
			\difj{1}	&\equj{1}	&\difj{0}	&\difj{0}	&\difj{0}	\\
			\difj{1}	&\equj{1}	&\difj{1}	&\difj{1}	&\difj{1}	\\
			\difj{1}	&\difj{0}	&\equj{2}	&\difj{0}	&\difj{0}	\\
			\difj{1}	&\difj{0}	&\equj{2}	&\difj{0}	&\difj{0}	\\
			\difj{2}	&\difj{2}	&\equj{2}	&\difj{1}	&\difj{1}	\\
			\difj{1}	&\difj{0}	&\difj{0}	&\equj{3}	&\difj{0}	\\
			\difj{1}	&\difj{0}	&\difj{0}	&\equj{3}	&\difj{0}	\\
			\difj{2}	&\difj{2}	&\difj{1}	&\equj{3}	&\difj{1}	\\
			\difj{1}	&\difj{0}	&\difj{0}	&\difj{0}	&\equj{4}	\\
			\difj{1}	&\difj{0}	&\difj{0}	&\difj{0}	&\equj{4}	\\
			\difj{2}	&\difj{2}	&\difj{1}	&\difj{1}	&\equj{4}	\\
		\end{array}&&\setlength\arraycolsep{1.5pt}\begin{array}{ccccc}
			P^0			&P^1		&P^2	 	&P^3		&P^4		\\\hline
			\equj{0}	&\equj{1}	&\equj{2}	&\difj{0}	&\difj{0}	\\
			\equj{0}	&\equj{1}	&\difj{0}	&\equj{3}	&\difj{0}	\\
			\equj{0}	&\equj{1}	&\difj{0}	&\difj{0}	&\equj{4}	\\
			\equj{0}	&\difj{0}	&\equj{2}	&\equj{3}	&\difj{0}	\\
			\equj{0}	&\difj{0}	&\equj{2}	&\difj{0}	&\equj{4}	\\
			\equj{0}	&\difj{0}	&\difj{0}	&\equj{3}	&\equj{4}	\\
			\difj{1}	&\equj{1}	&\equj{2}	&\equj{3}	&\difj{1}	\\
			\difj{1}	&\equj{1}	&\equj{2}	&\difj{1}	&\equj{4}	\\
			\difj{1}	&\equj{1}	&\difj{1}	&\equj{3}	&\equj{4}	\\
			\difj{2}	&\difj{2}	&\equj{2}	&\equj{3}	&\equj{4}	\\\hline
			\difj{1}	&\difj{0}	&\difj{0}	&\difj{0}	&\difj{0}	\\
			\difj{1}	&\difj{0}	&\difj{0}	&\difj{0}	&\difj{0}	\\
			\difj{1}	&\difj{0}	&\difj{0}	&\difj{0}	&\difj{0}	\\
			\difj{1}	&\difj{0}	&\difj{0}	&\difj{0}	&\difj{0}	\\
			\difj{1}	&\difj{0}	&\difj{0}	&\difj{0}	&\difj{0}	\\
			\difj{1}	&\difj{0}	&\difj{0}	&\difj{0}	&\difj{0}	\\
			\difj{2}	&\difj{2}	&\difj{1}	&\difj{1}	&\difj{1}	\\
			\difj{2}	&\difj{2}	&\difj{1}	&\difj{1}	&\difj{1}	\\
		\end{array}
	\end{array}
\end{array}&\begin{array}{c} 
	% ____ $\gamma(5, 4, 2)$
	\gamma(5, 4, 2) =8/18 =4/9\\[4pt]
	\begin{array}{ccc}\setlength\arraycolsep{1.5pt}
		\begin{array}{ccccc}
			Q^0			&Q^1		&Q^2	 	&Q^3		&Q^4		\\\hline
			\equj{0}	&\equj{1}	&\equj{2}	&\equj{3}	&\equj{4}	\\
			\equj{0}	&\equj{1}	&\equj{2}	&\equj{3}	&\equj{4}	\\
			\equj{0}	&\equj{1}	&\equj{2}	&\equj{3}	&\equj{4}	\\
			\equj{0}	&\equj{1}	&\equj{2}	&\equj{3}	&\equj{4}	\\
			\equj{0}	&\equj{1}	&\equj{2}	&\equj{3}	&\equj{4}	\\
			\equj{0}	&\equj{1}	&\equj{2}	&\equj{3}	&\equj{4}	\\
			\equj{0}	&\equj{1}	&\equj{2}	&\equj{3}	&\equj{4}	\\
			\equj{0}	&\equj{1}	&\equj{2}	&\equj{3}	&\equj{4}	\\\hline
			\equj{0}	&\equj{1}	&\difj{0}	&\difj{0}	&\difj{0}	\\
			\equj{0}	&\difj{0}	&\equj{2}	&\difj{0}	&\difj{0}	\\
			\equj{0}	&\difj{0}	&\difj{0}	&\equj{3}	&\difj{0}	\\
			\equj{0}	&\difj{0}	&\difj{0}	&\difj{0}	&\equj{4}	\\
			\difj{1}	&\equj{1}	&\equj{2}	&\difj{0}	&\difj{0}	\\
			\difj{1}	&\equj{1}	&\difj{0}	&\equj{3}	&\difj{0}	\\
			\difj{1}	&\equj{1}	&\difj{0}	&\difj{0}	&\equj{4}	\\
			\difj{1}	&\difj{0}	&\equj{2}	&\equj{3}	&\difj{0}	\\
			\difj{1}	&\difj{0}	&\equj{2}	&\difj{0}	&\equj{4}	\\
			\difj{1}	&\difj{0}	&\difj{0}	&\equj{3}	&\equj{4}	\\
		\end{array}&&\setlength\arraycolsep{1.5pt}\begin{array}{ccccc}
			P^0			&P^1		&P^2	 	&P^3		&P^4		\\\hline
			\equj{0}	&\equj{1}	&\equj{2}	&\equj{3}	&\difj{0}	\\
			\equj{0}	&\equj{1}	&\equj{2}	&\equj{3}	&\difj{0}	\\
			\equj{0}	&\equj{1}	&\equj{2}	&\equj{3}	&\difj{0}	\\
			\equj{0}	&\equj{1}	&\equj{2}	&\difj{0}	&\equj{4}	\\
			\equj{0}	&\equj{1}	&\equj{2}	&\difj{0}	&\equj{4}	\\
			\equj{0}	&\equj{1}	&\equj{2}	&\difj{0}	&\equj{4}	\\
			\equj{0}	&\equj{1}	&\difj{0}	&\equj{3}	&\equj{4}	\\
			\equj{0}	&\equj{1}	&\difj{0}	&\equj{3}	&\equj{4}	\\
			\equj{0}	&\equj{1}	&\difj{0}	&\equj{3}	&\equj{4}	\\
			\equj{0}	&\difj{0}	&\equj{2}	&\equj{3}	&\equj{4}	\\
			\equj{0}	&\difj{0}	&\equj{2}	&\equj{3}	&\equj{4}	\\
			\equj{0}	&\difj{0}	&\equj{2}	&\equj{3}	&\equj{4}	\\
			\difj{1}	&\equj{1}	&\equj{2}	&\equj{3}	&\equj{4}	\\
			\difj{1}	&\equj{1}	&\equj{2}	&\equj{3}	&\equj{4}	\\
			\difj{1}	&\equj{1}	&\equj{2}	&\equj{3}	&\equj{4}	\\\hline
			\difj{1}	&\difj{0}	&\difj{0}	&\difj{0}	&\difj{0}	\\
			\difj{1}	&\difj{0}	&\difj{0}	&\difj{0}	&\difj{0}	\\
			\difj{1}	&\difj{0}	&\difj{0}	&\difj{0}	&\difj{0}	\\
		\end{array}
	\end{array}
\end{array}
\end{array}$
\end{center}}
\end{figure}

When $k = 2$, the proof of \cref{cor-delta-p=k+k<=2} involves the basic solution $(y^*, x^*)$ of $LP_{q, p, 2}$ with the following non-zero coordinates: 
$$\left\{\begin{array}{rll}
x^*_0	&=\frac{(q-p)(q-\lfloor p/2\rfloor)}{(p-0)(\lfloor p/2\rfloor-0)}/\binom{q}{0}
	&=\frac{(q-p)(q-\lfloor p/2\rfloor)}{p\lfloor p/2\rfloor},					\\[5pt]
y^*_{\lfloor p/2\rfloor}	
	&=\frac{(q-p)(q-0)}{(p-\lfloor p/2\rfloor)(\lfloor p/2\rfloor-0)}/\binom{q}{\lfloor p/2\rfloor}
	&=\frac{q-p}{\lceil p/2\rceil}/\binom{q-1}{\lfloor p/2\rfloor-1},			\\[5pt]
x^*_p	&=\frac{(q-\lfloor p/2\rfloor)(q-0)}{(p-\lfloor p/2\rfloor)(p-0)}/\binom{q}{p}
	&=\frac{q-\lfloor p/2\rfloor}{\lceil p/2\rceil}/\binom{q-1}{p-1}.			\\
\end{array}\right.$$ 
Let $R^* > 0$. According to the proof of \cref{prop-Delta-PL}, $(R^* y^*, R^*, R^* x^*)$ is the representative vector of a regular $(q, p)$-CPA $(N, D)$ of strength 2 such that $R^*(N, D)/R(N, D) = \delta(q, p, 2)$ if and only if $R^*$ is a positive integer such that $R^*\times x^*_0$, $R^*\times y^*_{\lfloor p/2\rfloor}$, and $R^*\times x^*_p$ are integers. 
For example, when $p\in\set{3, 4}$, the smallest such integer depends on $q\bmod{3}$ and takes onn of tthe values $q-2$, $(q-3)(q-1)$, and $(q-3)(q-1)/3$. 
Thus, for $p\in\set{3, 4}$, we obtain the following optimal solutions $(y^*, x^*)$ and $(y, x)$ of $LP_{q, p, 2}$ and $P_{q, p, 2}$, depending on $p$ and $(q\bmod{3})$ (for both solutions we give only their non-zero coordinates):
$$\begin{array}{cc|ccc|cccc}
\multicolumn{2}{c|}{\textit{Case considered:}}
&\multicolumn{3}{c|}{\textit{Solution $(y^*, x^*)$ of $LP_{q, p, 2}$:}}
&\multicolumn{4}{c}{\textit{Solution $(y, x) = (R^*\times y^*, R^*, R^*\times x^*)$ of $P_{q, p, 2}$:}}\\\hline
p						&q\bmod{3}
	&x^*_0					&y^*_{\lfloor p/2\rfloor}	&x^*_p	
	&x_0					&y_{\lfloor p/2\rfloor}		&x_p	&y_q				\\\hline
3						&\forall
	&\frac{(q-3)(q-1)}{3}	&\frac{q-3}{2}				&\frac{1}{q-2}
	&2\binom{q-1}{3}		&\binom{q-2}{2}			&1		&q-2				\\[5pt]
\multirow{2}{*}{4}		&2
	&\multirow{2}{*}{$\frac{(q-4)(q-2)}{8}$}
						&\multirow{2}{*}{$\frac{q-4}{2(q-1)}$}
													&\multirow{2}{*}{$\frac{3}{(q-1)(q-3)}$}
	&3\binom{q-1}{4}		&\binom{q-3}{2}			&3		&(q-3)(q-1)		\\[5pt]
						&0, 1
	&					&								&
	&\binom{q-1}{4}	&\binom{q-3}{2}/3				&1		&(q-3)(q-1)/3		\\
\end{array}$$ 
From $(y, x)$, we derive $(q, p)$-ARPAs $(Q, P)$ of strength 2 such that $R^*(Q, P)/R(Q, P) = \delta(q, p, 2)$ using \cref{alg-D2G}. 
\Cref{fig-gamma_q_p_2} shows the ARPAs obtained when $(q, p)\in\set{(4, 3), (5, 3), (5, 4)}$.

% ____________________________ Conclusion (génerale)
\section{Concluding remarks and further research}
\label{sec-conc}
\Cref{cor-gamma=delta} together with \cref{thm-CT18} implies that $\mathsf{k\,CSP\!-\!q}$ reduces to $\mathsf{k\,CSP\!-\!p}$ by a reduction that preserves the differential approximation guarantee up to a multiplicative factor of $\delta(q, p, k)$. 
Moreover, $\delta(q, p, k)$ is an estimate of the differential ratio achieved by the best solutions of an instance of $\mathsf{k\,CSP\!-\!q}$ among those whose coordinates take at most $p$ distinct values. 
It is worth noting that, for instances $I$ of $\mathsf{k\,CSP\!-\!q}$ with $\nu$ variables, $\delta(\nu, d, k)$ also gives an estimate of how well Hamming balls of radius $d$ approximate the diameter of $I$, which is the quantity $|\mathrm{opt}(I) - \mathrm{wor}(I)|$. More precisely, for any Hamming ball of radius $d$ of such an instance, the maximum difference between two solution values is a fraction at least $\delta(\nu, d, k)$ of the instance diameter (although not yet published, this result is available in a preprint\footnote{J.-F. Culus and S. Toulouse. Deriving differential approximation results for $k$-CSPs from combinatorial designs. {\em arXiv eprint 2409.03903}, https://arxiv.org/abs/2409.03903, 2024}).
Therefore, deriving from \cref{thm-delta_opt} the value of $\delta(\nu, d, k)$ for more triples $(\nu, d, k)$ would refine our understanding of two aspects of the differential approximability of $k$-CSPs. 

However, our ongoing research focuses on a deeper exploration of both ARPAs and CPAs. 

% ____________ tableaux de peu de lignes
\smallskip
\begin{table}[t]{\footnotesize
\caption{The ratio $R^*(Q, P)/R(Q, P)$ in $(q, p)$-ARPAs of strength $k$ that minimize $R(Q, P)$, among those that realize $\gamma(q, p, k)$ (the $\gamma$ columns), and the minimum number $R(Q, P)$ of rows in $(q, p)$-ARPAs of strength $k$ (the $R$ columns). 
We use gray to emphasize the cases where ARPAs that can be interpreted as regular CPAs realize the corresponding value. These values can be computed by solving linear programs with continuous and integer variables \cite{CT26-reg}.}
\label{tab-gamma}
\begin{center}
\begin{tabular}{l}\setlength\arraycolsep{6.5pt}
$\begin{array}{|c|c|cc|cc|cc|cc|cc|cc|}
% _____________________ gamma
\multicolumn{2}{c|}{} &\multicolumn{12}{c|}{q}\\\cline{3-14}
\multicolumn{2}{c|}{} &\multicolumn{2}{c|}{3}	&\multicolumn{2}{c|}{4}	&\multicolumn{2}{c|}{5}	&\multicolumn{2}{c|}{6}	&\multicolumn{2}{c|}{7}	&\multicolumn{2}{c|}{8}\\\cline{3-14}
k	&p	&\gamma &R	&\gamma &R	&\gamma &R	&\gamma 		&R	&\gamma 	&R	&\gamma 	&R	\\
\hline\multirow{5}{*}{2}	%%%%%%%%%%%%%%%%%%%%%%%%% k =2
&2&\gr{1/4}&\gr{4}&\gr{1/9}&\gr{9}&\gr{1/16}&\gr{16}&\gr{1/25}&\gr{25}&\gr{1/36}&\gr{36}
	&\gr{1/49}&\gr{49}\\
\cline{2-14}
&3	&-	&-	&\gr{2/6}&4	&1/6		&6	&1/10		&10		&1/15	&15		&&\\
\cline{2-14}
&4	&-	&-	&-		&-	&\gr{8/18}	&4	&1/4		&4		&\gr{8/50}		&7		&1/9		&9\\
\cline{2-14}
&5	&-	&-	&-		&-	&-			&-	&7/14		&4		&3/10			&4		&2/10		&6\\
\cline{2-14}
&6	&-	&-	&-		&-	&-			&-	&-			&-		&\gr{9/16}		&4		&9/25		&4\\
\hline\multirow{4}{*}{3}	%%%%%%%%%%%%%%%%%%%%%%%%% k =3
&3	&-	&-	&\gr{1/8}	&\gr{8}&\gr{1/25}&\gr{25}	&\gr{1/56}&\gr{56}&\gr{1/105}	&\gr{105}
	&\gr{1/176}	&\gr{176}\\
\cline{2-14}
&4	&-	&-	&-		&-	&\gr{3/15}	&8	&\gr{4/54}	&15		&				&		&\gr{3/150}	&\\ 
\cline{2-14}
&5	&-	&-	&-		&-	&-			&-	&\gr{6/24}	&8		&\gr{10/98}		&12		&\gr{5/96}	&\\ 
\cline{2-14}
&6	&-	&-	&-		&-	&-			&-	&-			&-		&8/28 			&8		&1/8		&8\\ 
\hline
\end{array}$
\end{tabular}
\end{center}}
\end{table}

%% NB Gamma(6, 5, 3) min R :: par Delta(6, 5, 3) min R =8 *et* prolong° sol° Gamma(5, 4, 3) min R

In particular, we are interested in minimizing the number of rows in the arrays, an objective for which regular designs may be suboptimal.
\Cref{tab-gamma} gives the ratio of $R^*(Q, P)$ to $R(Q, P)$ in optimal ARPAs of $\Gamma(q, p, k)$ using a minimum number of rows, and also the minimum number of rows in ARPAs of $\Gamma(q, p, k)$, for some sets $(q, p, k)$ of parameters. 
For instance, the optimal ARPAs for the case $p =k$ also minimize the number of rows in the arrays, since they maximize $\gamma(q, k, k)$ with a single row of the form $0\ 1\ \cdots\ q-1$.
In contrast, in \cref{sec-opt}, we constructed an ARPA that realizes $\gamma(5, 3, 2)$ using arrays of 18 rows (see \cref{fig-gamma_q_p_2}), while \cref{fig-Gamma-ex} shows an ARPA that also realizes $\gamma(5, 3, 2)$ with arrays of only 6 rows. Similarly, for $\gamma(q, p, 1)$, we considered arrays of $\binom{q}{p}$ rows while we can trivially construct $(q, p)$-ARPAs $(Q, P)$ of strength 1 with $R^*(Q, P) = 1$ and $R(Q, P) = \lceil q/p\rceil$.%
\footnote{
For each $r\in\set{1 ,\ldots, \lceil q/p\rceil}$, insert into $P$ the row $P_r$ defined for $j\in\Sigma_q$ by $P_r^j =j$ if $(r-1)p\leq j < rp$ and $(r-1)p$ otherwise. 
Insert the row $0\ 1\ \cdots\ q-1$ into $Q$. Then complete $Q$ to satisfy $(k_=)$.}%fin_footnote
Thus, the search for ARPAs or CPAs with as few rows as possible is a natural continuation of this work. In particular, the question arises: does the equivalence between optimal ARPAs and CPAs hold when the optimization criterion is to minimize the number of rows?  

% ____________ tableaux simples
We are also interested in cases where rows cannot be repeated in the arrays, for which CPAs with certain parameters may not even exist. For example, the relation \cref{eq-qkk-MN-rec} of rank $h =0$ implies for any two integers $k > 0$ and $\nu\geq k + 2$ that in every $(\nu, k)$-CPA of strength $k$, at least one row is repeated at least once.% 
\footnote{We successively observe:
$\max\set{b_0, a_0}
	\geq\abs{a_0-b_0}
	\geq\sum_{i=k+1}^\nu\binom{i}{0}\binom{i-1-0}{k-0} b_i
	\geq\binom{\nu-1}{k} b_\nu
	\geq 2$.
The second inequality follows from the equality \cref{eq-qkk-MN-rec} of rank 0. The last inequality holds because $b_\nu\geq 1$ (by $(\Delta_N)$) and $\binom{\nu-1}{k}\geq 2$ (by assumption $\nu-1 > k > 0$). Therefore, $N$ or $D$ must contain at least two rows of zeros.}%fin_footnote 
In contrast, we believe (but this remains to be proved) that $\gamma(q, k, k)$ is always achievable by ARPAs without repeated rows. 

% ____________ relaxed ARPAs
\begin{figure}[t]{\scriptsize
\caption{Relaxed $(q, p)$-ARPAs of strength 2 and 3. 
We highlight in gray the rows of the form $a\ a+1\ \cdots\ a+q-1$ in the array $Q$.}%fin_caption
\label{fig-Gamma_E-ex}
\begin{center}
\setlength\arraycolsep{3pt}
\begin{tabular}{c|c}
$\begin{array}{c}  			% _____________ 1è colonne
	(q, p, k) =(3, 2, 2)\\[4pt]	%% (q, p, k) =(3, 2, 2)
	\begin{array}{ccc}\setlength\arraycolsep{1.5pt}	
		\begin{array}{ccc}
			Q^0		&Q^1	&Q^2	\\\hline
			0		&0		&0		\\
			\gr{0}	&\gr{1}	&\gr{2}	\\
			0		&2		&1		\\
		\end{array}&&\setlength\arraycolsep{1.5pt} 
		\begin{array}{ccc}
			P^0		&P^1	&P^2	\\\hline
			0		&0		&1		\\
			0		&1		&0		\\
			0		&2		&2		\\
		\end{array}
	\end{array}\\[-6pt]\\ 
	R^*(Q, P)/R(Q, P) =1/3\\[-4pt]\\\hline\\[-6pt]
	(q, p, k) =(5, 3, 2)\\[4pt]	%% (q, p, k) =(5, 3, 2)
	\begin{array}{ccc}\setlength\arraycolsep{1.5pt}	
		\begin{array}{ccccc}
			Q^0		&Q^1	&Q^2	&Q^3	&Q^4	\\\hline
			0		&0		&0		&0		&0		\\
			0		&0		&3		&0		&2		\\
			\gr{0}	&\gr{1}	&\gr{2}	&\gr{3}	&\gr{4}	\\
			\gr{0}	&\gr{1}	&\gr{2}	&\gr{3}	&\gr{4}	\\
			\gr{0}	&\gr{1}	&\gr{2}	&\gr{3}	&\gr{4}	\\
			\gr{0}	&\gr{1}	&\gr{2}	&\gr{3}	&\gr{4}	\\
			0		&2		&1		&1		&1		\\
			0		&2		&4		&1		&3		\\
			0		&3		&0		&4		&3		\\
			0		&4		&4		&2		&0		\\
		\end{array}&&\setlength\arraycolsep{1.5pt} 
		\begin{array}{ccccc}
			P^0		&P^1	&P^2	&P^3	&P^4\\\hline
			0		&0		&2		&2		&4	\\
			0		&0		&2		&3		&3	\\
			0		&1		&0		&1		&4	\\
			0		&1		&1		&3		&3	\\
			0		&1		&2		&1		&2	\\
			0		&1		&4		&0		&1	\\
			0		&2		&2		&4		&0	\\
			0		&2		&3		&3		&0	\\
			0		&3		&4		&0		&4	\\
			0		&4		&0		&3		&4	\\
		\end{array}
	\end{array}\\[-6pt]\\ 
	R^*(Q, P)/R(Q, P) =4/10 =2/5
\end{array}$&$\begin{array}{c} 			% _____________ 2è colonne
	(q, p, k) =(5, 4, 3)\\[4pt]
	\begin{array}{ccc}\setlength\arraycolsep{1.5pt}	
		\begin{array}{ccccc}
			Q^0	&Q^1&Q^2&Q^3&Q^4\\\hline
			0	&0	&1	&3	&4	\\
			0	&0	&2	&2	&4	\\
			0	&0	&2	&3	&3	\\
			0	&1	&1	&2	&3	\\
			\gr{0}&\gr{1}&\gr{2}&\gr{3}&\gr{4}\\
			\gr{0}&\gr{1}&\gr{2}&\gr{3}&\gr{4}\\
			\gr{0}&\gr{1}&\gr{2}&\gr{3}&\gr{4}\\
			\gr{0}&\gr{1}&\gr{2}&\gr{3}&\gr{4}\\
			0	&1	&3	&4	&0	\\
			0	&2	&2	&3	&0	\\
			0	&2	&2	&4	&4	\\
			0	&2	&3	&3	&4	\\
		\end{array}&&\setlength\arraycolsep{1.5pt} 
		\begin{array}{ccccc}
			P^0	&P^1&P^2&P^3&P^4\\\hline
			0	&0	&1	&2	&3\\
			0	&0	&2	&3	&4\\
			0	&0	&2	&3	&4\\
			0	&1	&1	&3	&4\\
			0	&1	&2	&2	&4\\
			0	&1	&2	&3	&0\\
			0	&1	&2	&3	&3\\
			0	&1	&2	&4	&4\\
			0	&1	&3	&3	&4\\
			0	&2	&2	&3	&4\\
			0	&2	&2	&3	&4\\
			0	&2	&3	&4	&0\\
		\end{array}
	\end{array}\\[-6pt]\\
	R^*(Q, P)/R(Q, P) =4/12 =1/3
\end{array}$ 
\end{tabular}
\end{center}}
\end{figure}

\begin{figure}[t]{\scriptsize
\caption{Verification that the upper-left array pair in \cref{fig-Gamma_E-ex} satisfies the $(k_\sim)$ condition.}
\label{fig-Gamma_E-k=}
\begin{center}
\begin{tabular}{c|c|c}
$\begin{array}{cc}				%% ______________ {0, 1}
	\setlength\arraycolsep{1.5pt}	
	\begin{array}{cc}
		Q^0		&Q^1	\\\hline
		0		&0		\\
		\gr{0}	&\gr{1}	\\
		0		&2		\\
	\end{array}&\setlength\arraycolsep{1.5pt} 
	\begin{array}{cc}
		P^0		&P^1	\\\hline
		0		&0		\\
		0		&1		\\
		0		&2		\\
	\end{array}
\end{array}$&$\begin{array}{cc}	%% ______________ {0, 2}
	\setlength\arraycolsep{1.5pt}	
	\begin{array}{cc}
		Q^0		&Q^2	\\\hline
		0		&0		\\
		\gr{0}	&\gr{2}	\\
		0		&1		\\
	\end{array}&\setlength\arraycolsep{1.5pt} 
	\begin{array}{cc}
		P^0		&P^2	\\\hline
		0		&1		\\
		0		&0		\\
		0		&2		\\
	\end{array}
\end{array}$&$\begin{array}{cc}	%% ______________ {0, 3}
	\setlength\arraycolsep{1.5pt}	
	\begin{array}{cc}
		Q^1		&Q^2	\\\hline
		0		&0		\\
		\gr{1}	&\gr{2}	\\
		2		&1		\\
	\end{array}&\setlength\arraycolsep{1.5pt} 
	\begin{array}{cc}
		P^1		&P^2	\\\hline
		0		&1		\\
		1		&0		\\
		2		&2		\\
	\end{array}
\end{array}$
\end{tabular}
\end{center}}
\end{figure}

\newcommand{\apx}[1]{}		%% valeur exacte?
\newcommand{\pvo}[1]{#1}	%% rapport pv opt 
\begin{table}[t]{\footnotesize
\caption{Values of $\gamma_E(q, p, k)$ for some triples $(q, p, k)$. These values can be computed by solving linear programs with continuous and integer variables \cite{CT26-reg}.}
\label{tab-gamma_E}
\begin{center}
\begin{tabular}{cc}\setlength\arraycolsep{6.5pt}
$\begin{array}{|c|c|cccccc|}
\multicolumn{2}{c|}{} &\multicolumn{6}{c|}{q}\\\cline{3-8}
k&p	&3		&4		&5 		&6						&7						&8		\\
\hline\multirow{6}{*}{2}
&2	&1/3	&1/4	&1/5	&9/59					&1/7					&\pvo{1/8}	\\
&3	&-		&1/2	&2/5	&4/13					&2/7					&93/404	\\
&4 	&-		&-		&3/5	&7/15					&3/7					&3/8	\\
&5 	&-		&-		&-		&2/3					&11/21					&13/28	\\
&6 	&-		&-		&-		&-						&5/7					&4/7	\\
&7 	&-		&-		&-		&-						&-						&3/4	\\
\hline\multirow{5}{*}{3}
&3	&-		&1/4	&1/11	&38425/701342			&\apx{3676/107221}		&		\\
&4	&-		&-		&1/3	&1/6					&\pvo{5/52}				&		\\
&5	&-		&-		&-		&4/9					&\pvo{2/9}				&		\\
&6	&-		&-		&-		&-						&1/2					&		\\
&7	&-		&-		&-		&-						&-						&9/16	\\
\hline
\end{array}$
&$\begin{array}{|c|c|ccc|}	\multicolumn{5}{c}{}\\
\multicolumn{2}{c|}{} &\multicolumn{3}{c|}{q}\\\cline{3-5}
k&p	&5 		&6					&7						\\%	&8		\\
\hline\multirow{2}{*}{4} %% gamma_E(6, 4, 4) :: 0.03159029059 ~557/17632 
&4	&1/11	&\apx{557/17632} 	&\apx{0.013964734}		\\%	&		\\
&5	&-		&1/6				&\apx{0.058898}			\\%	&		\\
%&6	&-		&-						&\apx{0.2088948787}		&		\\
\hline\multirow{2}{*}{5} %% gamma_E(7, 6, 5) :: solution 6/60
&5	&-		&1/16				&\apx{0.01281777623}	\\%	&		\\ 
&6	&-		&-					&1/10					\\%	&		\\
\hline
\end{array}$
\end{tabular}
\end{center}}
\end{table}

\smallskip
Most importantly, we have started to investigate the slight relaxation of ARPAs where any two words $(w_1, w_2, \ldots, w_\ell)$ and $(w_1+a, w_2+a, \ldots, w_\ell+a)$ are regarded as equivalent (see \cite{CT18}). 
This means that $Q$ does not have to contain the exact row $0\ 1\ \cdots\ q-1$, but only some row of the form $a\ a+1\ \cdots\ a+q-1$. This also means that instead of $(k_=)$ the arrays $Q$ and $P$ must satisfy the following condition $(k_\sim)$: for all subsets $J$ of $k$ column indices and all $w\in\Sigma_q^k$, the number of rows $P_r$ of $P$ satisfying $P_r^J\in\set{w, w+(1, 1 ,\ldots, 1) ,\ldots, w+(q-1, q-1 ,\ldots, q-1)}$ must be the same as in $Q$.
\Cref{fig-Gamma_E-ex} shows examples. In particular, the pair of arrays given for the case $(q, p, k) = (3, 2, 2)$ satisfies $R(Q, P) = 3$ and $R^*(Q, P) = 1$, and we can see in \Cref{fig-Gamma_E-k=} that it verifies $(k_\sim)$.

\Cref{tab-gamma_E} gives some values of the counterpart, denoted by $\gamma_E(q, p, k)$, of $\gamma(q, p, k)$ for these relaxed ARPAs. 
Constructing such designs seems to be more difficult, because pairs of arrays satisfying $(k_\sim)$ over $\Sigma_q$ often fail to meet this condition over $\Sigma_{q+1}$. 
They also make the reduction from $\mathsf{k\,CSP\!-\!q}$ to $\mathsf{k\,CSP\!-\!p}$ more efficient provided that the constraints of the initial instance of $\mathsf{k\,CSP\!-\!q}$ remain stable under a uniform shift of all the variables. 
For example, an equation $(x_{i_1} + x_{i_2} +\cdots+ x_{i_k}\equiv a_i\bmod{q})$ where $k$ is a multiple of $q$, or a requirement that $k$ literals $x_{i_1}+a_{i_1}, x_{i_2}+a_{i_2} ,\ldots, x_{i_k}+a_{i_k}$ are all equal modulo $q$, are such constraints.
When $k=2$ and $q\in\set{3, 4, 5, 7, 8}$, we obtain a differential approximability bound of $(2-\pi/2)/q$---instead of $(2-\pi/2)/(q-1)^2$---for such instances of $\mathsf{2\,CSP\!-\!q}$. 
Identifying general constructs for these relaxed ARPAs not only seems quite challenging, but will also improve our knowledge of how well $\mathsf{k\,CSP\!-\!q}$ reduces to $\mathsf{k\,CSP\!-\!p}$. 

% ______________ Bilio
\bibliographystyle{plain}
\bibliography{gamma}

% ____________ Preuves
\begin{appendices}
\crefalias{section}{appsec}

\section{Extended proof of \cref{thm-delta_d=k-UB}}
\label{appendix:sec-UB}

We show that the left-hand side of \cref{eq-Delta_h_i}, 
$$\textstyle\sum_{\ell=h}^k(-1)^{k-\ell}\binom{\nu-\ell}{k-h}\binom{i-h}{\ell-h}\binom{i-1-\ell}{k-\ell}
	-\binom{\nu-i}{k-h},$$
is equal to
$$(-1)^{k-h+1}\frac{(i-h)!}{(i-1-k)!(k-h)!}\times f(A,B),$$
where $A$ and $B$ are defined by 
$$\begin{array}{rl}
	A 	&:= \set{\nu-k+h+1 ,\ldots, \nu},	\\
	B 	&:= \set{h ,\ldots, k}\cup\set{i}	
\end{array}$$
and $f(A,B)$ is defined by \cref{eq-fAB}.

According to \cref{eq-fAB}, we have:
$$\begin{array}{rl}
	f(A,B) 	&=\sum_{\ell=h}^k \displaystyle
				\frac{\prod_{a=\nu-k+h+1}^\nu(\ell-a)}
							{\prod_{b=h}^{\ell-1}(\ell-b)\times\prod_{\ell+1}^k(\ell-b)\times(\ell-i)}
				+\frac{\prod_{a=\nu-k+h+1}^\nu(i-a)}{\prod_{b=h}^k(i-b)}.	
\end{array}$$
 
We make the following simple observations:
\begin{enumerate}
	\item For $b\in B$, $\binom{\nu-b}{k-h}$ equals $0$ if $b\in A$;
		otherwise, this coefficient is $1/(k-h)!$ times the quantity 
				$\prod_{a\in A}(a-b) = (-1)^{k-h}\prod_{a\in A}(b-a)$.
	\item For $\ell\in\set{h ,\ldots, k}$, $\binom{i-h}{\ell-h}\binom{i-1-\ell}{k-\ell}$ 
			is $(i-h)!/(i-1-k)!$ times the quantity
			$\displaystyle\frac{1}{(i-\ell)(\ell-h)!(k-\ell)!}$, 
		while $(\ell-h)!=\prod_{b=h}^{\ell-1}(\ell-b)$ 
			and $(k-\ell)!=\prod_{\ell+1}^k(b-\ell)=(-1)^{k-\ell}\prod_{\ell+1}^k(\ell-b)$.
	\item The quantity $(i-h)!/(i-k-1)!$ corresponds exactly to $\prod_{b=h}^k(i-b)$.
\end{enumerate}

The conclusion follows directly.

% _______________________________________________________________________ PV (k=) Thm joker
\section{Extended proof of \cref{fact-algo-k=}}
\label{appendix:sec-fact_k=}

First, we prove \cref{lem-f_z}. Then, we establish the expressions in \cref{tab-fact6-valf} which, together with \cref{lem-f_z}, allow us to establish \cref{fact-algo-k=}.

\subsection{Detailed proof of \cref{lem-f_z}}

%% NB z_\nu <0 :: pour que r soit bien défini
Let $k > 0$ and $\nu > k$ be two integers, and let $z = (z_0, z_1 ,\ldots, z_\nu)$ be a sequence of integers satisfying \cref{delta_z-equ} with $z_\nu < 0$. 
We denote by $r$ the largest integer in $\set{k, k+1 ,\ldots, \nu-1}$ such that  $z_r\neq 0$ (note that the rank-$k$ equality \cref{delta_z-equ}, together with the assumption $z_\nu<0$, implies that such an integer exists). 
For natural numbers $h,\lambda,c$ satisfying $h\leq r$, $c\leq\nu-r$, and $\lambda\leq\nu-c-h$, we define:
\begin{align}\nonumber
f_z(h, \lambda, c)	&:=	\textstyle
	\sum_{i = h}^r
		\binom{\nu-c-h-\lambda}{i-h}\binom{\nu-c-i}{r-i}/\binom{\nu-1-i}{r-i} 
		\times z_i.
\end{align}
Our goal is to prove relation \cref{eq-f=0}, which states that these quantities vanish whenever $h+\lambda <k$.

First, assume $\lambda > 0$. Pascal's rule implies the following equalities:
$$\begin{array}{rll}
\binom{\nu-c-h-\lambda}{i-h} 
&=\binom{\nu-c-h-\lambda+1}{i-h}-\binom{\nu-c-h-\lambda}{i-h-1},
&i\in\set{h ,\ldots, r}
\end{array}.$$
Hence, $f_z(h, \lambda, c)$ can be rewritten as the difference $f_z(h, \lambda-1, c)-f_z(h+1, \lambda-1, c)$. 
Now suppose that $\lambda = 0$. Given the equality
$$\begin{array}{rll}
\binom{\nu-c-h}{i-h} \binom{\nu-c-i}{r-i} / \binom{\nu-1-i}{r-i}
	&=\binom{\nu-c-h}{r-h} \binom{\nu-1-h}{i-h} / \binom{\nu-1-h}{r-h},
	&i\in\set{h ,\ldots, r},
\end{array}$$
$f_z(h, 0, c)$ can be expressed as: 
\begin{align}\label{eq-f_z-0}
f_z(h, 0, c)	
	&\textstyle=\binom{\nu-c-h}{r-h}/\binom{\nu-1-h}{r-h} 
				\times \sum_{i = h}^r \binom{\nu-1-h}{i-h} z_i.
\end{align}
Assuming $k-h > 0$, we can write:
\begin{align}
\textstyle\sum_{i = h}^r \binom{\nu-1-h}{i-h} z_i
&\textstyle=\sum_{i = h}^r \sum_{j = 0}^{k-h-1} \binom{k-h-1}{j} \binom{\nu-k}{i-h-j} z_i
	\qquad\text{by Vandermonde's identity}\nonumber\\\nonumber
&\textstyle=\sum_{j = 0}^{k-h-1} \binom{k-h-1}{j} \sum_{i = h}^r \binom{\nu-k}{i-h-j} z_i\\
&\textstyle\label{eq-z_0}=\sum_{j = 0}^{k-h-1} \binom{k-h-1}{j} 
	\sum_{i = h+j}^{\nu-k+h+j} \binom{\nu-k}{i-h-j} z_i.
\end{align}
The last equality holds because for all $j\in\set{0 ,\ldots, k-h-1}$, we have the inequality $\nu-k+h+j\leq\nu-1$, while either $r\leq\nu-k+h+j$ and $z_i = 0$ for all $i\in\set{r+1, r+2 ,\ldots, \nu-k+h+j}$, or $r>\nu-k+h+j$ and $\binom{\nu-k}{i-h-j} = 0$ for all $i\in\set{\nu-k+h+j+1 ,\ldots, r-1, r}$.

The relations \cref{delta_z-equ}, \cref{eq-z_0} and \cref{eq-f_z-0} together imply that $f_z(h, 0, c)=0$, provided that $h < k$. 
Hence, for triples of parameters $(h, \lambda, c)$ such that $h+\lambda <k$ and $c\leq\nu-r$, the quantities $f_z(h, \lambda, c)$ satisfy the recurrence relation and the initial condition \cref{eq-f-rec}, namely:
$$\begin{array}{rl}
f_z(h, \lambda, c)
&=\left\{\begin{array}{rl}
	f_z(h, \lambda-1, c)-f_z(h+1, \lambda-1, c)	&\text{if $\lambda > 0$},\\
	0														&\text{otherwise}.
\end{array}\right.
\end{array}$$
From \cref{eq-f-rec}, we deduce that, for such parameters, $f_z(h, \lambda, c)$ is a weighted sum of terms of the form $f_z(h', 0, c)$ with $h'\leq h'+\lambda <k$. 
Since these terms vanish, relation \cref{eq-f=0} follows.

\subsection{Expressions in \cref{tab-fact6-valf}}
We justify the expressions for $R(H, L, v)$ given in \cref{tab-fact6-valf}.

\medskip
$\bullet$ Case \ref{g-c_in_H+g^{c-1}}. $R(H, L, v)$ is the sum of the expressions \cref{g(J)-c_in_H} with $\lambda=0$ and \cref{g^{c-1}(J)}. 
Accordingly,
$$\begin{array}{rl}
R(H, L, v)	
&=\binom{\nu-c-h}{r-h} z_r
	+\sum_{i = h}^{r-1}\left(\binom{\nu-c-1-i}{r-i}+\binom{\nu-c-1-i}{r-1-i}\right)
	\times\binom{\nu-c-h}{i-h} /	\binom{\nu-1-i}{r-i} \times z_i
\end{array}.$$
For $i\in\set{h ,\ldots, r-1}$, we have 
	$\binom{\nu-c-1-i}{r-i}+\binom{\nu-c-1-i}{r-1-i} = \binom{\nu-c-i}{r-i}$. 
For $i = r$, we observe that the coefficient of $z_r$ in $f_z(h, 0, c)$ is   $\binom{\nu-c-h}{r-h}$. We conclude that $R(H, L, v) = f_z(h, 0, c)$.

$\bullet$ Case \ref{g-c_in_H}. $R(H, L, v)$ is the expression \cref{g(J)-c_in_H}, corresponding exactly to $f_z(h, \lambda-1, c+1)$, except for the coefficients associated with $z_r$, namely  $\binom{\nu-c-h-\lambda}{r-h}$ in \cref{g(J)-c_in_H} and $\binom{\nu-c-h-\lambda}{r-h}\times\binom{\nu-c-1-r}{0}/\binom{\nu-1-r}{0}$ in $f_z(h, \lambda-1, c+1)$. 
However, these coefficients coincide provided that $c<\nu-r$. 

$\bullet$ Case \ref{g-c_notin_H}. $R(H, L, v)$ is the expression \cref{g(J)-c_notin_H}, which corresponds exactly to \cref{g(J)-c_in_H} taken at $h+1$.

$\bullet$ Case \ref{g^c}. $R(H, L, v)$ is the expression \cref{g^c(J)} with a positive value for the parameter $\lambda$. 
For $i\in\set{h ,\ldots, r-1}$, the coefficients of $z_i$ in \cref{g^c(J)}, $f_z(h, \lambda, c+1)$ and $f_z(h, \lambda-1, c+2)$ respectively are
%%$\binom{\nu-c-1-h-\lambda}{i-h}/\binom{\nu-1-i}{r-i}$ times
$$\begin{array}{l}
\frac{\binom{\nu-c-1-h-\lambda}{i-h}}{\binom{\nu-1-i}{r-i}}\times
	\binom{\nu-c-2-i}{r-1-i},	\	
\frac{\binom{\nu-c-1-h-\lambda}{i-h}}{\binom{\nu-1-i}{r-i}}\times
	\binom{\nu-c-1-i}{r-i},		\text{ and }
\frac{\binom{\nu-c-1-h-\lambda}{i-h}}{\binom{\nu-1-i}{r-i}}\times
	\binom{\nu-c-2-i}{r-i}.
\end{array}$$

The binomial coefficient $\binom{\nu-c-2-i}{r-1-i}$ is equal to $\binom{\nu-c-1-i}{r-i}-\binom{\nu-c-2-i}{r-i}$, which reduces to $\binom{\nu-c-1-i}{r-i}$ when $c = \nu-r-1$.
For $z_r$, its coefficient in \cref{g^c(J)} is 0, while its coefficient in $f_z(h, \lambda, c+1)$ is equal to $\binom{\nu-c-1-h-\lambda}{r-h}$. Note that the latter coefficient is zero when $c = \nu-r-1$ and $\lambda > 0$, and coincides with the coefficient of $z_r$ in $f_z(h, \lambda-1, c+2)$ when $c <\nu-r-1$.
We conclude that for $i\in\set{h ,\ldots, r}$, the coefficient of $z_i$ in $R(H, L, v)$ is the same as that in $f_z(h, \lambda, c+1)$ if $c = \nu-r-1$, and in $f_z(h, \lambda, c+1)-f_z(h, \lambda-1, c+2)$ otherwise.

$\bullet$ Case \ref{g-c_notin_H+g^{c-1}}. $R(H, L, v)$ is the sum of the expression \cref{g(J)-c_notin_H} with $\lambda=0$ and \cref{g^{c-1}(J)}. 
Accordingly,
$$\begin{array}{rl}
R(H, L, v)
&=\binom{\nu-c-1-h}{r-1-h} z_r 
	+\binom{\nu-c-1-h}{r-1-h}/\binom{\nu-1-h}{r-h} \times z_h\\
	&\qquad+\sum_{i = h+1}^{r-1}\left(
		 \binom{\nu-c-1-h}{i-1-h}\binom{\nu-c-1-i}{r-i} 
		+\binom{\nu-c-h}{i-h}\binom{\nu-c-1-i}{r-1-i}
	\right)/\binom{\nu-1-i}{r-i} \times z_i.
\end{array}$$

If $c = \nu-r$, then $\cref{g(J)-c_notin_H}$ reduces to $z_r$, while \cref{g^{c-1}(J)} coincides with $f_z(h, 0, c)-z_r$.
So we assume $c <\nu-r$. 
For $i\in\set{h+1 ,\ldots, r-1}$, we write:
$$\begin{array}{l}
\binom{\nu-c-1-h}{i-1-h}\binom{\nu-c-1-i}{r-i} 
	+\binom{\nu-c-h}{i-h}\binom{\nu-c-1-i}{r-1-i}\\
\qquad=\binom{\nu-c-h}{i-h}\times\left(
		\binom{\nu-c-1-i}{r-i}+\binom{\nu-c-1-i}{r-1-i}
	\right)-\binom{\nu-c-1-h}{i-h}\binom{\nu-c-1-i}{r-i}\\
\qquad=\binom{\nu-c-h}{i-h}\binom{\nu-c-i}{r-i} 
	-\binom{\nu-c-1-h}{i-h}\binom{\nu-c-1-i}{r-i}.
\end{array}$$

The coefficient of $z_i$ in $R(H, L, v)$ is therefore the same as in $f_z(h, 0, c)-f_z(h, 0, c+1)$. 
For $i\in\set{h, r}$, we similarly observe that the coefficients of $z_i$ in $f_z(h, 0, c)-f_z(h, 0, c+1)$ and in $R(H, L, v)$ are equal. 

$\bullet$ Case \ref{g+g^c}. $R(H, L, v)$ is the sum of \cref{g(J)-c_notin_H,g^c(J)} with $\lambda=k-h$. Accordingly,
$$\begin{array}{rl}
R(H, L, v)
&=\binom{\nu-c-1-k}{r-1-h} z_r 
	+\binom{\nu-c-2-h}{r-1-h}/\binom{\nu-1-h}{r-h} \times z_h\\
	&\qquad+\sum_{i = h+1}^{r-1}\left(
		 \binom{\nu-c-1-k}{i-1-h}\binom{\nu-c-1-i}{r-i} 
		+\binom{\nu-c-1-k}{i-h}\binom{\nu-c-2-i}{r-1-i}
	\right)/\binom{\nu-1-i}{r-i} \times z_i.
\end{array}$$

For $i\in\set{h+1 ,\ldots, r-1}$, we write:
$$\begin{array}{l}
\binom{\nu-c-1-k}{i-1-h}\binom{\nu-c-1-i}{r-i} 
		+\binom{\nu-c-1-k}{i-h}\binom{\nu-c-2-i}{r-1-i}\\
\qquad=\left(\binom{\nu-c-1-k}{i-1-h}+\binom{\nu-c-1-k}{i-h}\right)
	\times\binom{\nu-c-1-i}{r-i}-\binom{\nu-c-1-k}{i-h}\binom{\nu-c-2-i}{r-i}\\
\qquad=\binom{\nu-c-k}{i-h}\binom{\nu-c-1-i}{r-i} 
	-\binom{\nu-c-1-k}{i-h}\binom{\nu-c-2-i}{r-i}.
\end{array}$$
We deduce that the coefficient of $z_i$ in $R(H, L, v)$ is the same as in $f_z(h, k-h-1, c+1)$ if $c = \nu-r-1$, as in $f_z(h, k-h-1, c+1)-f_z(h, k-h-1, c+2)$ otherwise.

For $i\in\set{h, r}$, if $c = \nu-r-1$, we observe that the coefficients of $z_r$ in $f_z(h, k-h-1, \nu-r)$ and $R(H, L, v)$ are the binomial coefficients $\binom{r+1-k}{r-h}$ and $\binom{r-k}{r-1-h}$, which equal 1 when $h = k-1$, and 0 when $h < k-1$.  
Moreover, the coefficients of $z_h$ in $R(H, L, v)$ and $f_z(h, k-h-1, \nu-r)$ are both equal to $1/\binom{\nu-1-r}{r-h}$.
Otherwise, $c <\nu-r-1$, and we observe that, for both $z_r$ and $z_h$, the difference between their coefficients in $f_z(h, k-h-1, c+1)$ and $f_z(h, k-h-1, c+2)$ coincides with their respective coefficients in $R(H, L, v)$.

% _______________________________________________________________________ Proposition PL SB 
\section{Extended proof of \cref{prop-Delta_PL-SB} (proof of identity \cref{eq-SB-tech})}
\label{appendix:sec-PLSB}

For two natural numbers $\nu,k$ with $\nu > k > 0$, and a $(k+1)$-element subset $Z$ of $\set{0, 1 ,\ldots, \nu}$ such that $\nu\in Z$, we consider the vector $z = (z_0, z_1 ,\ldots, z_\nu)\in\mathbb{Z}^{\nu+1}$ defined componentwise by \cref{eq-SB_z}, i.e.:
	$$\begin{array}{rll}
	z_i	&=\left\{\begin{array}{rll}
			\prod_{a\in Z\setminus\set{i,\nu}}\frac{\nu-a}{i-a}/\binom{\nu}{i}	
				&\text{if $i\in Z\setminus\set{\nu}$}\\
			-1 	&\text{if $i = \nu$}\\
			0	&\text{otherwise}
		\end{array}\right.,	&i\in\set{0, 1 ,\ldots, \nu}
	\end{array}.$$

Our goal is to establish the identity \cref{eq-SB-tech},
$$\begin{array}{rll}
\sum_{i = h}^{\nu-k+h}\binom{\nu-k}{i-h}z_i	
	&=(-1)^{k-h+1}\displaystyle\frac{\prod_{i\in Z\setminus\set{\nu}}(\nu-i)}{\prod_{i = 0}^{k-1}(\nu-i)}
								\times f(A_h, B_h),					&h\in\set{0, 1 ,\ldots, k}
\end{array},$$
where, for $h\in\set{0, 1 ,\ldots, k}$:
\begin{itemize}
\item $A_h=\left(\set{0, 1 ,\ldots, h-1}\cup\set{\nu-k+h+1, \nu-k+h+2 ,\ldots, \nu}\right)\setminus Z$;
\item $B_h=Z\cap\set{h, h+1 ,\ldots, \nu-k+h}$;
\item $f(A_h, B_h)=\sum_{i\in B_h}\displaystyle\frac{\prod_{a\in A_h}(i-a)}{\prod_{b\in B_h\setminus\set{i}}(i-b)}$ (see \cref{lem-fAB}).
\end{itemize}

\medskip
First, suppose $h = k$. 
By \cref{eq-SB_z}, we have:
$$\begin{array}{rl}
\sum_{i = k}^\nu \binom{\nu-k}{i-k} z_i
	&=-1+\sum_{i\in Z\cap\set{k ,\ldots, \nu-1}} 
		\binom{\nu-k}{i-k}/\binom{\nu}{i}
		\times\prod_{a\in Z\setminus\set{i,\nu}}\frac{\nu-a}{i-a}
\end{array}.$$

For $i\in Z\cap\set{k ,\ldots, \nu-1}$, we observe:
$$\begin{array}{l}
\begin{array}{rll}
\binom{\nu-k}{i-k}/\binom{\nu}{i} 
	&=\frac{(\nu-k)!}{\nu!}\times\frac{i!}{(i-k)!}
	&=\prod_{a = 0}^{k-1} \frac{i-a}{\nu-a}
\end{array},\\[4pt]
\begin{array}{rll}
\prod_{a\in Z\setminus\set{i,\nu}}\frac{\nu-a}{i-a} 
	&=\frac{\prod_{a\in Z\setminus\set{\nu}}(\nu-a)}{\prod_{a\in Z\setminus\set{i}}(i-a)}
		\times\frac{\nu-i}{i-\nu}
	&=-\frac{\prod_{a\in Z\setminus\set{\nu}}(\nu-a)}{\prod_{a\in Z\setminus\set{i}}(i-a)}\\[9pt]
\end{array}.
\end{array}$$

Considering that $A_k = \set{0 ,\ldots, k-1}\setminus Z$ and $B_k = Z\cap\set{k ,\ldots, \nu}$, we deduce successively:
$$\begin{array}{rl}
	\sum_{i = k}^\nu \binom{\nu-k}{i-k} z_i
	&=-1-\frac{\prod_{a\in Z\setminus\set{\nu}}(\nu-a)}{\prod_{a = 0}^{k-1}(\nu-a)} \times
		\sum_{i\in Z\cap\set{k ,\ldots, \nu-1}} 
			\frac	{\prod_{a = 0}^{k-1} (i-a)}
					{\prod_{a\in Z\setminus\set{i}}(i-a)}	\\[8pt]
	&=-\frac{\prod_{a\in Z\setminus\set{\nu}}(\nu-a)}{\prod_{a = 0}^{k-1}(\nu-a)} \times
		\sum_{i\in B_k} 
			\frac	{\prod_{a = 0}^{k-1} (i-a)}
					{\prod_{a\in Z\setminus\set{i}}(i-a)}\\[8pt]
	&=-\frac{\prod_{a\in Z\setminus\set{\nu}}(\nu-a)}{\prod_{a = 0}^{k-1}(\nu-a)} \times 
		\sum_{i\in B_k} 
			\frac	{\prod_{a\in A_k} (i-a)}
					{\prod_{b\in B_k\setminus\set{i}}(i-b)}.
\end{array}$$
where the last equality follows from the relations:
$$\begin{array}{rll}
	\set{0 ,\ldots, k-1} 	&=(\set{0 ,\ldots, k-1}\cap Z) \sqcup A_k,  \\
	Z\setminus\set{i} 		&=(\set{0 ,\ldots, k-1}\cap Z) \sqcup (B_k\setminus\set{i}),	&i\in B_k.
\end{array}$$

For $h\in\set{0, 1 ,\ldots, k-1}$, we similarly obtain:
$$\begin{array}{rl}
\sum_{i = h}^{\nu-k+h} \binom{\nu-k}{i-h} z_i
&=\sum_{i\in Z\cap\set{h ,\ldots, \nu-k+h}} \binom{\nu-k}{i-h}/\binom{\nu}{i}
					\times\prod_{a\in Z\setminus\set{i,\nu}}\frac{\nu-a}{i-a}
	\qquad\text{by \cref{eq-SB_z}}\\[8pt]
&=-\sum_{i\in B_h} \frac{(\nu-k)!}{\nu!}
					\times\frac{i!(\nu-i)!}{(i-h)!(\nu-k-i+h)!}
					\times\prod_{a\in Z\setminus\set{i,\nu}}\frac{\nu-a}{i-a}
					\times\frac{\nu-i}{i-\nu}\\[8pt]
&=-\frac{\prod_{a\in Z\setminus\set{\nu}}(\nu-a)}{\prod_{a = 0}^{k-1}(\nu-a)}\times\sum_{i\in B_h}
	\frac	{\prod_{a = 0}^{h-1}(i-a)\times\prod_{a = \nu-k+h+1}^{\nu}(a-i)}
			{\prod_{a\in Z\setminus\set{\nu}}(i-a)}\\[8pt]
&=(-1)^{k-h+2}\times\frac{\prod_{a\in Z\setminus\set{\nu}}(\nu-a)}{\prod_{a = 0}^{k-1}(\nu-a)} 
	\times\sum_{i\in B_h}
	\frac	{\prod_{a\in\set{0 ,\ldots, \nu}\setminus\set{h ,\ldots, \nu-k+h}}(i-a)}
			{\prod_{a\in Z\setminus\set{i}}(i-a)}\\[8pt]
&=(-1)^{k-h}\times\frac{\prod_{a\in Z\setminus\set{\nu}}(\nu-a)}{\prod_{a = 0}^{k-1}(\nu-a)} 
	\times\sum_{i\in B_h}
	\frac	{\prod_{a\in A_h}(i-a)}
			{\prod_{a\in B_h\setminus\set{i}}(i-a)},\\
\end{array}$$
where the last equality follows after introducing the set $C_h:=\set{0 ,\ldots, \nu}\setminus\set{h ,\ldots, \nu-k+h}$ and observing that
$C_h = (C_h \cap Z) \sqcup A_h$ and, for $i\in B_h$, $Z\setminus\set{i} = (C_h \cap Z) \sqcup  (B_h\setminus\set{i})$.

\end{appendices}

\end{document}